\theoremstyle{plain}
\newtheorem{theorem}{Theorem}[section]
\newtheorem{corollary}[theorem]{Corollary}
\newtheorem{prop}[theorem]{Proposition}
\newtheorem{lemma}[theorem]{Lemma}
\theoremstyle{definition}
\newtheorem{definition}[theorem]{Definition}
\newtheorem{example}[theorem]{Example} 
\theoremstyle{plain}
\newcommand{\nat}{\mathbb{N}}
\newcommand{\real}{\mathbb{R}}
\newcommand{\N}{\mathbb{N}}
\newcommand{\prob}{\mathbb{P}}
\newcommand{\p}{\mathbb{P}}
\newcommand{\E}{\mathbb{E}}
\newcommand{\expt}{\mathbb{E}}
\newcommand{\floor}[1]{{\left\lfloor #1 \right\rfloor}}
\newcommand{\sset}{\subset}
\newcommand{\al}{\alpha}
\newcommand{\Om}{\Omega}
\newcommand{\mathforall}{\text{ for all }}
\newcommand{\mathand}{\;\text{and}\;}
\newcommand{\ga}{\gamma}
\newcommand{\ep}{\epsilon}
\newcommand{\om}{\omega}
\newcommand{\de}{\delta}
\newcommand{\sig}{\sigma}
\newcommand{\del}{\partial}
\newcommand{\scrM}{\mathcal{M}}
\newcommand{\scrW}{\mathcal{W}}
\newcommand{\scrF}{\mathcal{F}}
\newcommand{\card}[1]{\left\vert #1 \right\vert}
\newcommand{\close}[1]{\mkern 1.5mu\overline{\mkern-1.5mu#1\mkern-1.5mu}\mkern 1.5mu}
\newcommand{\supp}{\text{supp}}
\newcommand{\Z}{\mathds{Z}}
\newcommand{\C}{\mathds{C}}
\newcommand{\R}{\mathds{R}}
\newcommand{\ddd}{\mathellipsis}
\DeclareMathOperator*{\argmax}{arg\,max}
\DeclareMathOperator*{\capa}{cap}
\newcommand{\X}{\times}
\newcommand{\as}{\text{almost surely}}
\newcommand{\smin}{\setminus}
\newcommand{\lf}{\left}
\newcommand{\rg}{\right}
\DeclareMathOperator{\Int}{int}
\author{Duncan Dauvergne}                
\title[Zeros of random polynomials]{A necessary and sufficient condition for convergence of the zeros of random polynomials}
\begin{document}

\maketitle
\begin{abstract}
Consider random polynomials of the form $G_n = \sum_{i=0}^n \xi_i p_i$, where the $\xi_i$ are i.i.d.\ non-degenerate complex random variables, and $\{p_i\}$ is a sequence of orthonormal polynomials with respect to a regular measure $\tau$ supported on a compact set $K$. We show that the zero measure of $G_n$ converges weakly almost surely to the equilibrium measure of $K$ if and only if $\mathbb{E} \log(1 + |\xi_0|) < \infty$. This generalizes the corresponding result of Ibragimov and Zaporozhets in the case when $p_i(z) = z^i$. We also show that the zero measure of $G_n$ converges weakly in probability to the equilibrium measure of $K$ if and only if $\mathbb{P} (|\xi_0| > e^n) = o(n^{-1})$. 

\medskip

Our proofs rely on results from small ball probability and exploit the structure of general orthogonal polynomials. Our methods also work for sequences of asymptotically minimal polynomials in $L^p(\tau)$, where $p \in (0, \infty]$. In particular, sequences of $L^p$-minimal polynomials and (normalized) Faber and Fekete polynomials fall into this class.
\end{abstract}

\section{Introduction}
Let $\{p_0, p_1, \dots\}$ be a sequence of polynomials where each $p_i$ is of degree $i$. Let $\{\xi_j : j \in \nat\}$ be a sequence of i.i.d.\ complex random variables. Throughout the paper, we always assume that the $\xi_j$ are non-degenerate (that is, their distribution is not supported on a single point). We consider the sequence of random polynomials
\begin{equation}
\label{E:Gn}
G_n = \sum_{j=0}^n \xi_j p_j, \qquad n \in \N.
\end{equation}
The polynomial $G_n$ has $D_n$ zeros $z_1, \dots z_{D_n}$, where $D_n = \max \{ j \le n : \xi_j \ne 0\}$. Define the \textbf{zero measure} of $G_n$ by
$$
\mu_{G_n} = \frac{1}{D_n} \sum_{i=1}^{D_n} \de_{z_i}.
$$
We are interested in understanding the global asymptotic behaviour of $\mu_{G_n}$ for various sequences of polynomials $\{p_i\}$. 
In particular, we are interested in finding limits, both almost surely and in probability, of the random measure $\mu_{G_n}$.
Here the underlying topology is the weak topology on probability measures on $\C$.

\medskip

This problem was first considered by Hammersley \cite{hammersley1972zeros} in the context of the \emph{Kac ensemble}
$$
K_n(z) = \sum_{j=0}^n \xi_j z^j, \qquad n \in \nat.
$$
Shortly thereafter, Shparo and Shur \cite{shparo1962} proved the first results about the concentration of the zeros of $G_n$ near the unit circle.

\medskip

The global zero distribution of the Kac ensemble has been extensively studied (see, for example, \cite{hughes2008zeros, ibragimov2013distribution, shparo1962}).
In particular, Ibragimov and Zaporozhets \cite{ibragimov2013distribution} showed that the condition
\begin{equation}
\label{E:IZ}
\expt \log(1 + |\xi_0|) < \infty
\end{equation}
is both necessary and sufficient for $\mu_{K_n}$ to converge weakly almost surely 
to normalized Lebesgue measure $\frac{1}{2\pi} d \theta$ on the unit circle $C = \{z : |z| = 1\}$. 

\medskip

We can view the sequence of monomials $\{1, z, z^2, \dots\}$ used to form the Kac ensemble
as an orthonormal basis for the space $L^2(\frac{1}{2\pi} d \theta)$, 
and the measure $\frac{1}{2\pi} d \theta$ as the equilibrium measure of $C = \supp(\frac{1}{2\pi} d \theta)$. Based on this observation, it is natural to replace $\{1, z, z^2, \dots\}$ with another orthonormal polynomial sequence with respect to a compactly supported measure
$\tau$ to form a sequence as in \eqref{E:Gn}.
We can then ask if the zero measure $\mu_{G_n}$ converges to the equilibrium measure of $\supp(\tau)$.  Note that in the Kac ensemble case, it is a coincidence that the equilibrium measure and the measure $\tau$ are the same.

\medskip

This approach was first taken up by Shiffman and Zelditch in \cite{shiffman1999distribution}.
They proved almost sure convergence results for particular types of measures $\tau$,
in the case where the coefficients $\xi_j$ are i.i.d.\ complex Gaussian random variables. 
Other investigations in this direction have been conducted in \cite{bloom2018universality, bloom2006zeros, bloom2015random, bayraktar2017global, bayraktar2013equidistribution, kabluchko2014asymptotic, pritsker2017equidistribution, bloom2018note, pritsker2018zero}.
In this paper, we continue this line of research. Our primary concern is to find the weakest possible 
conditions on the coefficients $\xi_j$ so that the sequence $\mu_{G_n}$ converges weakly almost surely to the equilibrium measure of $K = \supp(\tau)$, which we denote by $\mu_K$.

\medskip

In particular, we generalize the Ibragimov-Zaporozhets necessity and sufficiency theorem about the Kac ensemble to a wide class of random sums of asymptotically minimal polynomials with i.i.d.\ weights. The asymptotic minimality condition that we use is needed to ensure that the underlying sequence of polynomials is linked in some way to the equilibrium measure of $K$, see the discussion after Definition \ref{D:asym-ext}. This class includes random sums of orthonormal polynomials with i.i.d.\ weights where the background measure $\tau$ satisfies a weak density condition (regularity) on its support. We also analyze when the sequence $\mu_{G_n}$ converges weakly in probability, and prove necessity-sufficiency statements in that case.

\medskip

The specific problem of finding weak convergence conditions for the zeros of random sums of orthonormal or asymptically minimal polynomials has been tackled previously by various authors. The following papers are those with results most directly related to our work.

\medskip

Kabluchko and Zaporozhets \cite{kabluchko2014asymptotic} proved convergence of $\mu_{G_n}$ to $\mu_K$
in probability in the case where the $p_j$ are of the form
$a_j z^j$ for certain sequences $a_j \in \C$ under the Ibragimov-Zaporozhets condition \eqref{E:IZ} and proved almost sure convergence in some special cases. Their results include the case of random sums of orthonormal polynomials when the measure $\tau$ is rotationally symmetric. Their results also address more general random analytic functions, including random orthogonal polynomial arrays generated from a circularly symmetric measure and a circularly symmetric weight function.

\medskip

Bloom and Dauvergne \cite{bloom2018universality} extended this result by showing that $\mu_{G_n}$ converges to $\mu_K$ 
in probability for any sequence of polynomials $\{p_j\}$ generated 
from a measure $\tau$ satisfying the Bernstein-Markov property (we will discuss this property later in the introduction)
under the condition
\begin{equation}
\label{E:BD-moment}
\prob(|\xi_0| > e^n) = o(n^{-1}).
\end{equation}
This condition is slightly weaker than \eqref{E:IZ}. 
They also established almost sure convergence of $\mu_{G_n}$ to $\mu_K$ in the case of a rotationally symmetric measure $\tau$ and weights satisfying \eqref{E:IZ}.

\medskip

In \cite{pritsker2018zero}, Pritsker considered random sums of polynomials with asymptotically minimal $L^\infty$-norm on a compact set $K$. He established almost sure convergence of $\mu_{G_n}$ to $\mu_K$ when the i.i.d.\ coefficients $\xi_i$ satisfy the moment conditions
\begin{equation}
\label{E:log-cond}
\expt \log (1 + |\xi_0|) < \infty \qquad \mathand \quad \sup_{z \in \C} \expt(\log^- |\xi_0 + z|)^t < \infty \;\;\;\; \text{for some $t > 1$.}
\end{equation}
Here $f^- = - \min (0, f)$. Pristker's results also hold when $K$ is simply connected with empty interior when the second condition above is replaced by 
$
\expt \log^- |\xi_0| < \infty.
$

\medskip

Pritsker and Ramachandran \cite{pritsker2017equidistribution} have studied this problem for sequences $\{p_j\}$ given by Faber, Bergman, or Szeg\H{o} polynomials where the compact set $K$ is the closure of a Jordan domain with an analytic boundary $D$. In this case they showed that condition \eqref{E:IZ} was both necessary and sufficient for almost sure convergence of the zero measure $\mu_{G_n}$. All these classes of polynomials fall into the framework of this paper.

\medskip

In the papers discussed above, almost sure convergence results are only obtained when either the underlying polynomials have a particular explicit structure which makes them easier to analyze, or else the random variables satisfy particular anticoncentration estimates (i.e. \eqref{E:log-cond}). In contrast, our proofs only require estimates that can be verified for broad classes of polynomials and estimates from small ball probability that hold for all non-degenerate random variables. This allows us to prove optimal convergence theorems in great generality. Moreover, our proofs of necessity and sufficiency go through in the same way for both convergence in probability and almost sure convergence. In much of the previous work, different types arguments were needed to establish the two different types of convergence despite the fact that the underlying phenomena are quite similar.

\subsection{Main Results}
\label{SS:main-results}

\medskip

Let $\tau$ be a probability measure with non-polar compact support $K \sset \C$ (note that if $K$ is polar, then the equilibrium measure is not even uniquely defined).
Let $\{p_n = \sum_{i=0}^n a_{n, i} z^i \}$ be the sequence of orthonormal polynomials formed by applying the Gram-Schmidt
procedure to the sequence $\{1, z, z^2, \dots\}$. We say that the measure $\tau$ is \textbf{regular} on its support $K$ if
$$
\lim_{n \to \infty} \frac{1}n \log |a_{n, n}| = -\log \capa(K).
$$
Regularity of a measure is a very weak density condition that is satisfied by almost any natural measure (see the discussion in Section \ref{SS:regularity}). We first state our main convergence theorems for random sums of orthonormal polynomials.

\begin{theorem}(Almost Sure Convergence)
	\label{T:extremal-as} 
	Let $\tau$ be a regular measure with non-polar compact support $K \sset \C$. Let $\{p_i\}$ be the sequence of orthonormal polynomials in $L^2(\tau)$ formed by the Gram-Schmidt procedure, and let $\{\xi_i\}$
	be a sequence of i.i.d.\ non-degenerate complex random variables.
	Define the random orthonormal polynomial
	$$
	G_n = \sum_{i=0}^n \xi_i p_i,
	$$
	and let $\mu_{G_n}$ be the zero measure of $G_n$. 
	
	\medskip
	
	The measure $\mu_{G_n}$ converges weakly almost surely to the equilibrium measure $\mu_K$ if and only if 
	\begin{equation}
	\label{E:as-cond}
	\expt \log(1 + |\xi_0|) < \infty.
	\end{equation}
	In particular, if \eqref{E:as-cond} fails, then the sequence $\mu_{G_n}$ has no almost sure limit in the space of probability measures on $\C$.
\end{theorem}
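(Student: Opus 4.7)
The plan is to translate the zero-measure statement into $L^1_{loc}$ convergence of the normalized log-moduli. If $G_n$ has leading coefficient $c_n = \xi_{D_n}a_{D_n,D_n}$, then
\begin{equation*}
\tfrac{1}{D_n}\log|G_n(z)| = \tfrac{1}{D_n}\log|c_n| - U^{\mu_{G_n}}(z),
\end{equation*}
and a standard potential-theoretic principle says $\mu_{G_n}\to\mu_K$ weakly is equivalent to the pair of conditions $\tfrac{1}{D_n}\log|G_n|\to g_K$ in $L^1_{loc}(\C)$ and $\tfrac{1}{D_n}\log|c_n|\to -\log\capa(K)$, where $g_K = -U^{\mu_K} - \log\capa(K)$ is the Green's function of $K$ with pole at infinity. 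Regularity of $\tau$ gives $\tfrac{1}{n}\log|a_{n,n}|\to -\log\capa(K)$, and non-degeneracy of $\xi_0$ forces $D_n/n\to 1$ a.s., so the normalization condition collapses to $\tfrac{1}{n}\log|\xi_n|\to 0$ a.s.

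For necessity, suppose $\mu_{G_n}\to\mu_K$ a.s. Evaluating the potentials against test functions supported far from $K$ and using that $\mu_K$ is supported in $K$ extracts $\tfrac{1}{D_n}\log|c_n|\to -\log\capa(K)$, which by the previous paragraph forces $\tfrac{1}{n}\log|\xi_n|\to 0$ a.s. A direct Borel--Cantelli computation for i.i.d.\ $\xi_j$ then shows this is equivalent to $\expt\log(1+|\xi_0|)<\infty$.

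For sufficiency, assume $\expt\log(1+|\xi_0|)<\infty$. Borel--Cantelli immediately yields $|\xi_n|\le e^{\ep n}$ eventually a.s., for any $\ep>0$. Regularity supplies the $n$-th root asymptotic $\limsup_n|p_n(z)|^{1/n}\le e^{g_K(z)}$ uniformly on compact subsets of $\C$. Combining these gives $\limsup_n\tfrac{1}{n}\log|G_n(z)|\le g_K(z)$ uniformly on compacts, which together with subharmonicity furnishes the $L^1_{loc}$ upper bound on the normalized log-moduli.

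The main obstacle is the matching lower bound $\liminf_n\tfrac{1}{n}\log|G_n(z)|\ge g_K(z)$, which does not come from any termwise estimate and requires anticoncentration of $\sum_j\xi_j p_j(z)$. I would invoke a small-ball probability estimate of the following shape: there exist constants $C,\alpha>0$ depending only on the law of $\xi_0$ such that for all $n$ and all scalars $a_0,\dots,a_n$,
\begin{equation*}
\prob\lf(\lf|\sum_{j=0}^n\xi_j a_j\rg|<\delta\max_j|a_j|\rg)\le C\delta^\alpha.
\end{equation*}
Taking $\delta = e^{-\ep n}$ and $a_j = p_j(z)$ at a fixed $z\notin K$, and using that $|p_n(z)|$ is at least $e^{n(g_K(z)-\ep)}$ infinitely often by regularity, Borel--Cantelli forces $|G_n(z)|\ge e^{n(g_K(z)-2\ep)}$ eventually a.s.\ at the chosen $z$. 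Running this on a countable dense subset of $\C\setminus K$ and then using subharmonicity of $\tfrac{1}{n}\log|G_n|$ together with the uniform upper bound from the previous paragraph (via a standard upper-envelope / Hartogs-type compactness argument for subharmonic sequences) upgrades pointwise a.s.\ control to $L^1_{loc}$ convergence, closing the loop. The delicate point throughout is controlling the small-ball probabilities at the exponential scale $e^{-\ep n}$ while relying solely on non-degeneracy of $\xi_0$, with no moment assumption smuggled in.
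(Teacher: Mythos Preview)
Your proposal has genuine gaps in both directions.

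\textbf{Sufficiency.} The small-ball inequality you invoke,
\[
\prob\Big(\Big|\sum_{j=0}^n \xi_j a_j\Big| < \delta \max_j |a_j|\Big) \le C\delta^\alpha,
\]
with $C,\alpha$ depending only on the law of $\xi_0$, is false. Take $\xi_j$ Rademacher and all $a_j=1$: for every $\delta\in(0,1)$ the left side equals $\prob(\sum_j\xi_j=0)\sim c/\sqrt{n}$, which does not go to zero as $\delta\to 0$. There is no uniform polynomial-in-$\delta$ anticoncentration bound for arbitrary coefficient vectors under mere non-degeneracy; one needs an arithmetic hypothesis on the $a_j$. The paper uses a Nguyen--Vu type estimate (Lemma~\ref{L:as-criterion}) that yields anticoncentration only when the normalized vector $(a_0,\dots,a_n)/\|a\|$ cannot be covered by $n^{2/3+\delta}$ balls of radius $e^{-\epsilon n}$. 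Verifying this spread-out condition for the values $\{p_j(z)\}$ is the real work, done for Lebesgue-a.e.\ $z\in\Int(P(K))$ via Cartan's estimate (Lemma~\ref{L:poly-estimate}) applied to the polynomials $p_{m_2}-p_{m_1}$. Your idea of running a pointwise bound and upgrading via subharmonicity is reasonable in spirit, but without a correct anticoncentration input it does not close.

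\textbf{Necessity.} Weak convergence $\mu_{G_n}\to\mu_K$ does \emph{not} force $\tfrac{1}{D_n}\log|c_n|\to -\log\capa(K)$: a vanishing fraction of zeros can escape to infinity fast enough to wreck the leading-coefficient asymptotic while leaving the empirical measure unaffected in the weak limit (e.g.\ $n-1$ zeros in $K$ and one zero at $e^{n^2}$; then $\mu_{G_n}$ converges weakly but $U^{\mu_{G_n}}(z)\to -\infty$ at every fixed $z$). So you cannot ``extract'' $\tfrac{1}{n}\log|\xi_n|\to 0$ from zero-measure convergence alone. Moreover, the theorem asserts that failure of the moment condition precludes \emph{any} almost sure weak limit, not merely convergence to $\mu_K$; your argument does not address this. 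The paper's route is entirely different: assuming $\expt\log(1+|\xi_0|)=\infty$, a Kochen--Stone version of Borel--Cantelli (Lemma~\ref{L:cond-as}) produces infinitely many $n$ at which a single term $\xi_{\lfloor n/2\rfloor}p_{\lfloor n/2\rfloor}$ dominates the rest of the sum on a suitable curve, and Rouch\'e's theorem then places at least half the zeros outside any prescribed disk, ruling out every possible weak limit.
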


\begin{theorem}(Convergence in Probability)
	\label{T:extremal-p}
	Let $G_n$ be as in Theorem \ref{T:extremal-as}.
	The measure $\mu_{G_n}$ converges weakly in probability to $\mu_K$ if and only if 
	\begin{equation}
	\label{E:prob-cond}
	\prob(|\xi_0| > e^n) = o(n^{-1}).
	\end{equation}
	Again, if \eqref{E:prob-cond} fails, then the sequence $\mu_{G_n}$ has no limit in probability in the space of probability measures on $\C$.
\end{theorem}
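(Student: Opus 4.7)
The plan is to translate both directions into statements about convergence of the normalized logarithms $u_n := \frac{1}{n}\log|G_n|$ in $L^1_{\text{loc}}(\C)$, which for polynomials of degree $n$ is equivalent (via $\mu_{G_n} = \frac{1}{2\pi n}\Delta\log|G_n|$) to weak convergence of the zero measures, with the target potential being $g_K + \log\capa(K)$. Regularity of $\tau$ supplies the deterministic input $\tfrac{1}{n}\log|p_n(z)|\to g_K(z)$ in $L^1_{\text{loc}}$, and the probabilistic input on the $\xi_j$ enters via union bounds and small-ball estimates. A nice feature of this framework is that the same skeleton handles both ``in probability'' and ``almost surely'' statements with only the moment hypothesis varying.

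\medskip

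For sufficiency, assume \eqref{E:prob-cond}. A union bound gives $\prob(\max_{j\le n}|\xi_j|>e^{\epsilon n}) \le (n+1)\prob(|\xi_0|>e^{\epsilon n})\to 0$ for every $\epsilon>0$, so $\tfrac{1}{n}\log^+\max_j|\xi_j|\to 0$ in probability. The crude bound $|G_n(z)|\le (n+1)\max_j|\xi_j|\max_j|p_j(z)|$ then yields the correct upper bound on $u_n$ in $L^1_{\text{loc}}$ in probability. For the matching lower bound at a fixed $z$, the random variable $G_n(z)=\sum_j\xi_j p_j(z)$ is a non-degenerate linear combination of i.i.d.\ variables, and a Kolmogorov--Rogozin-type small-ball estimate gives $\prob(|G_n(z)|<e^{-\epsilon n}\max_j|\xi_j p_j(z)|)\to 0$. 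Combining pointwise lower bounds with subharmonicity of $u_n$ (through mean-value inequalities on small disks) promotes these to $L^1_{\text{loc}}$ lower bounds in probability, which is enough to conclude.

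\medskip

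For necessity, suppose \eqref{E:prob-cond} fails: there exist $c>0$ and $n_k\to\infty$ with $\prob(|\xi_0|>e^{n_k})\ge c/n_k$. By independence, $\prob(\exists\,j\le n_k/2 : |\xi_j|>e^{n_k}) \ge 1-(1-c/n_k)^{\lfloor n_k/2\rfloor}\ge \delta$ for some $\delta>0$ and all large $k$. On this event there is an index $j^*\le n_k/2$ with $|\xi_{j^*}|>e^{n_k}$. Using that $|p_j(z)|^{1/j}\to e^{g_K(z)}$, one can find a fixed open set $U$ of the form $\{g_K(z)\in(s,s')\}$ on which the ``middle'' term $\xi_{j^*}p_{j^*}$ is exponentially larger than the ``top'' term $\xi_{n_k}p_{n_k}$; a Newton-polygon/Jensen-formula argument then forces a fraction at least $1-j^*/n_k\ge 1/2$ of the zeros of $G_{n_k}$ to lie in the set $\{g_K\ge s'\}$, which has zero $\mu_K$-mass. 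This blocks convergence in probability to $\mu_K$; uniqueness of the $L^1_{\text{loc}}$ limit then rules out any other limit in probability as well.

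\medskip

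The main obstacle is the $L^1_{\text{loc}}$ lower bound on $u_n$ in the sufficiency step: pointwise small-ball estimates are straightforward off $K$ where a single term dominates, but near $K$ no single $|\xi_j p_j(z)|$ dominates the rest and spatial uniformity is delicate. Obtaining it will likely require combining the anti-concentration inequality with a Remez- or Bernstein--Markov-type polynomial estimate (available under regularity) to control the oscillation of $u_n$ on small disks near $K$.
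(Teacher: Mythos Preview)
Your outline---upper bound via a union bound on $\max_j|\xi_j|$, lower bound via anticoncentration, necessity via a dominant middle term---matches the paper's architecture. But both the sufficiency lower bound and the necessity step have gaps, and the paper resolves them differently from what you suggest.

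\textbf{Sufficiency.} You correctly flag the lower bound on $u_n$ as the crux, but your proposed fix (Kolmogorov--Rogozin plus a Remez/Bernstein--Markov estimate to control oscillation near $\partial K$) is not what the paper does, and it is unclear it would suffice: Kolmogorov--Rogozin only gives $\prob(|G_n(z)|\text{ small})\lesssim n^{-1/2}$, which is not summable and so does not directly yield the almost-sure pointwise bounds needed to identify $L^1_{\mathrm{loc}}$ subsequential limits. The paper instead (a) uses the Nguyen--Vu inverse Littlewood--Offord theorem, which gives small-ball probabilities decaying like $n^{-A}$ for any $A$ once the coefficient vector is sufficiently spread; (b) verifies that spread hypothesis for Lebesgue-a.e.\ $z\in\Int(P(K))$ via Cartan's estimate applied to the differences $p_{m_2}-p_{m_1}$; and (c) avoids the boundary $\partial P(K)$ altogether. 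The last point is important: the paper's deterministic criterion (Theorem~\ref{T:det-cvg-in}) only requires the lower bound on $\Int(P(K))$, because a separate potential-theoretic argument (Lemma~\ref{L:p-descent}, Theorem~\ref{T:stahl-mod-2}) shows that the $L^p(\tau)$-norm condition alone forces $p_\mu=p_{\mu_K}$ on $\overline{U(K)}$ for any subsequential limit $\mu$. No Remez-type control near $\partial K$ is needed. A further subtlety your sketch misses: the deterministic criterion normalizes not by the leading coefficient $\zeta_{n,n}=\xi_n a_{n,n}$---which can be arbitrarily small if $\expt\log^-|\xi_0|=\infty$---but by a near-leading coefficient $\zeta_{n,n-I_n}$ with $I_n=o(n/\log n)$, chosen in Lemma~\ref{L:zeta-bd} to be almost surely of the right size. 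This is why Theorem~\ref{T:det-cvg-in} is formulated with the flexible index $i_n$.

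\textbf{Necessity.} Your event ``$\exists\,j^*\le n_k/2$ with $|\xi_{j^*}|>e^{n_k}$'' does not by itself make $\xi_{j^*}p_{j^*}$ dominant: nothing prevents some $\xi_i$ with $i>n_k/2$ from being just as large, and then on any level set of $g_K$ the term $\xi_ip_i$ beats $\xi_{j^*}p_{j^*}$, so your Newton-polygon step breaks down. The paper handles this by constructing (Lemma~\ref{L:tail-bd}, Corollary~\ref{C:in-prob-borel}) the event $B_{n,c}=\{\exists\,j\in[n/4,n/2]:|\xi_j|\ge e^{cn}|\xi_i|\text{ for all }i\ne j\}$ and showing $\limsup_n\prob(B_{n,c})>0$ for every $c$; this genuinely singles out one term. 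On $B_{n,c}$ the paper then finds, via a capacity/diameter estimate (Lemma~\ref{L:lower-bd}), a closed curve $C_J$ in an arbitrarily distant annulus on which $|p_J|$ is bounded below, so that $|\xi_Jp_J|>\sum_{i\ne J}|\xi_ip_i|$ on $C_J$; Rouch\'e's theorem then gives $\mu_{G_n}(D_r)\le 1/2$. Your Jensen-formula idea could perhaps be made to work, but only once an event like $B_{n,c}$ has been established.
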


Theorems \ref{T:extremal-as} and \ref{T:extremal-p} are special cases of our main convergence result about random asymptotically minimal polynomials. 

\begin{definition}
	\label{D:asym-ext} A sequence of degree-$n$ polynomials $\{p_n = \sum_{i=0}^n a_{n, i} z^i : n \in \N\}$ is \textbf{asymptotically minimal} on a compact set $K \sset \C$ if there exists a regular measure $\tau$ with $\supp(\tau) = K$ and a $p \in (0, \infty]$ such that
	$$
	\lim_{n \to \infty} \frac{1}n \log |a_{n, n}| = - \log \capa(K) \qquad \mathand \qquad \lim_{n \to \infty} \frac{1}n \log \|p_n\|_{L^p(\tau)} = 0.
	$$
	Note that in the case when $p = \infty$, we can drop the need for a measure $\tau$.
\end{definition}

In the above definition, the term asymptotically minimal comes from the fact that the ratio $\frac{1}n \log (\|p_n\|_{L^p(\tau)}/|a_{n, n}|)$ converges to the minimal value $\log \capa (K)$. Either this minimality condition or something similar is essentially necessary for the sequence $\{p_n\}$ to be linked to the equilibrium measure of $K$. Note that the orthonormal sequences of polynomials $\{p_n\}$ in Theorems \ref{T:extremal-as} and \ref{T:extremal-p} are truly minimal, rather than just asymptotically minimal: for every $n$, the polynomial $p_{n, n}/a_{n, n}$ minimizes the $L^2(\tau)$-norm among all degree-$n$ monic polynomials.

\medskip

We require the convergence of both the leading coefficients and the $L^p(\tau)$-norms (rather than just the ratio) so that different terms are of comparable size when we take a random sum of asymptotically minimal polynomials. If only the ratio $\frac{1}n \log (\|p_n\|_{L^p(\tau)}/|a_{n, n}|)$ converges, then the corresponding random sum of asymptotically minimal polynomials need not have any structure. For example, for any sequence $\{a_{n, n}\}$, the polynomials $\{p_n = a_{n, n} z^n \}$ satisfy convergence of the ratio $\frac{1}n \log (\|p_n\|_{L^p(\tau)}/|a_{n, n}|)$, but we can choose the numbers $\{a_{n, n}\}$ so that the zero measure of randomized sum $\sum_{i=0}^n \xi_i p_i$ will not converge for any choice of the distribution of $\xi_0$.

\medskip

Since regular measures are defined with respect to orthonormal polynomials, the $L^2$ norm seems to play a special role in the above definition. However, this is not the case. Let $\scrM_n$ be the set of monic, degree-$n$ polynomials. By Theorem 3.4.1 from \cite{stahl1992general}, if
\begin{equation}
\label{E:p-regular}
\lim_{n \to \infty} \frac{1}n \log ( \inf_{q \in \scrM_n} ||q||_{L^p(\tau)} ) = \log \capa K
\end{equation}
for one $p \in (0, \infty)$, then it holds for all $p \in (0, \infty)$. As discussed above, the minimizing sequence of polynomials in \eqref{E:p-regular} with $p = 2$ is the renormalized sequence of orthonormal polynomials, so we can take Equation \eqref{E:p-regular} to be the definition of regularity of $\tau$ when $p = 2$. Hence a measure is `$2$-regular' if and only if it is `$p$-regular' (in the sense of Equation \eqref{E:p-regular}) for all $p \in (0, \infty)$. Before moving on, we remark that \eqref{E:p-regular} implies that for any sequence of degree-$n$ polynomials $p_n(z) = a_{n, n} z^n + \dots, n \in \N$ and any regular measure $\tau$,
\begin{equation}
\label{E:lower-lower}
\liminf_{n \to \infty} \frac{1}{n} \log \lf(\frac{||p_n||_{L^p(\tau)}}{|a_{n, n}|} \rg) \ge \log \capa(K).
\end{equation}
This will useful to keep in mind later on.

\medskip

We can now state the more general version of Theorems \ref{T:extremal-as} and \ref{T:extremal-p}.

\begin{theorem}
	\label{T:extremal}
	Let $K$ be a non-polar compact subset of $\C$. Theorem \ref{T:extremal-as} and Theorem \ref{T:extremal-p} still hold when the sequence $\{p_i : i \in \N\}$ is any sequence of asymptotically minimal polynomials on $K$. 
\end{theorem}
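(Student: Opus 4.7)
The plan is to reduce Theorem \ref{T:extremal} to a verification that any asymptotically minimal sequence $\{p_n\}$ satisfies the two properties that drive the proofs of Theorems \ref{T:extremal-as} and \ref{T:extremal-p}: the leading-coefficient asymptotic $n^{-1}\log|a_{n,n}| \to -\log\capa(K)$ (property P1) and the sub-exponential growth $n^{-1}\log\|p_n\|_{L^\infty(K)} \to 0$ (property P2). Property (P1) is part of Definition \ref{D:asym-ext}. Property (P2) is built into Definition \ref{D:asym-ext} when $p=\infty$; for $p \in (0,\infty)$ I would derive it from the $L^p(\tau)$-bound in the definition by invoking the Bernstein-Markov-type comparison $\|q\|_{L^\infty(K)} \le M(n)\,\|q\|_{L^p(\tau)}$ with $n^{-1}\log M(n) \to 0$, valid for all polynomials $q$ of degree at most $n$. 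This comparison holds whenever $\tau$ is regular, by Stahl-Totik \cite{stahl1992general} together with the equivalence of $p$-regularity across $p \in (0,\infty)$ noted after \eqref{E:p-regular}.

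With (P1) and (P2) in hand, the Bernstein-Walsh inequality gives the envelope $\limsup_n n^{-1}\log|p_n(z)| \le g_K(z)$ uniformly on compact subsets of the unbounded component $\Omega$ of $\C \setminus K$. This is all that the proofs of Theorems \ref{T:extremal-as} and \ref{T:extremal-p} require of the polynomial sequence. Specifically, the upper bound $\limsup_n n^{-1}\log|G_n(z)| \le g_K(z)$ follows from this envelope together with the control $\max_{j \le n}|\xi_j| \le e^{o(n)}$ that Borel-Cantelli extracts from \eqref{E:as-cond} or \eqref{E:prob-cond}; the matching lower bound comes from small-ball probability applied to the leading random term $\xi_{D_n} p_{D_n}(z)$, which relies only on (P1); and a standard potential-theoretic argument, via Hartogs' lemma and the upper envelope theorem, upgrades these two envelopes for $n^{-1}\log|G_n|$ to the weak convergence $\mu_{G_n} \to \mu_K$. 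The necessity direction runs in the same way: when \eqref{E:as-cond} or \eqref{E:prob-cond} fails, $|\xi_n|$ is exponentially larger than the partial sums below it along a subsequence, so $G_n$ is dominated by $\xi_n p_n$ there and $\mu_{G_n}$ tracks the zero measures of $p_n$, which in general need not converge to $\mu_K$ (as already illustrated by $p_n(z) = z^n$ on the unit circle).

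The main obstacle is checking that the small-ball lower bound step in the proofs of Theorems \ref{T:extremal-as} and \ref{T:extremal-p} genuinely factors through (P1) and does not secretly exploit any finer pointwise lower bound on $|p_n(z)|$ that only true orthonormality would supply. The fix is to apply small-ball probability at points $z \in \Omega$ where the Bernstein-Walsh envelope is attained; such points form a set of full Lebesgue measure by Hartogs' lemma applied to the subharmonic sequence $n^{-1}\log|p_n|$, and a pointwise-almost-everywhere lower bound for $n^{-1}\log|G_n(z)|$ is enough for the potential-theoretic conclusion to go through.
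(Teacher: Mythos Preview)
Your sufficiency argument has a genuine gap in two places. First, matching the envelope $n^{-1}\log|G_n(z)| \to g_K(z)$ for a.e.\ $z$ in the \emph{unbounded} component $\Omega=U(K)$ does not determine the limit of $\mu_{G_n}$. Two distinct measures on $P(K)$ can have identical potentials on $U(K)$: for $K=\{|z|=1\}$, both $\delta_0$ and normalized arclength satisfy $p_\mu(z)=-\log|z|$ for $|z|>1$. The paper handles exactly this by also establishing a lower bound for $n^{-1}\log|G_n(z)|$ on $\Int(P(K))$ (Theorem \ref{T:det-cvg-in}(ii)); without that, convergence to $\mu_K$ simply does not follow. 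Second, your proposed lower bound ``small-ball probability applied to the leading random term $\xi_{D_n}p_{D_n}(z)$'' breaks down because there is no assumption ruling out $\expt\log^-|\xi_0|=\infty$, so $|\xi_{D_n}|$ may be $e^{-cn}$ with non-summable probability. The paper's fix is twofold: it normalizes $G_n$ by a near-top coefficient $\zeta_{n,n-I_n}$ with $I_n=o(n/\log n)$ rather than by the top one (Lemma \ref{L:zeta-bd}), and it applies the Nguyen--Vu small-ball estimate to the \emph{entire} sum $\sum_i\xi_ip_i(z)$, which requires first proving (via Cartan's estimate) that the values $\{p_i(z)\}_{i\le n}$ cannot be covered by $n^{3/4}$ tiny balls for a.e.\ $z\in\Int(P(K))$ (Proposition \ref{P:on-int}). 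That spreading argument is the heart of the sufficiency proof and is not visible in your sketch. (Separately, the Bernstein--Markov comparison you invoke to pass from $L^p(\tau)$ to $L^\infty(K)$ is only known when the \emph{set} $K$ is regular, which is not assumed here; the paper avoids this by working with $\|G_n\|_{L^p(\tau)}$ throughout.)

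Your necessity argument is also off. If the \emph{top} term $\xi_np_n$ dominates on a large circle, Rouch\'e's theorem tells you all $n$ zeros of $G_n$ lie inside that circle---which says nothing against convergence. And for many asymptotically minimal sequences (e.g.\ orthonormal polynomials for a regular $\tau$) the zero measures $\mu_{p_n}$ \emph{do} converge to $\mu_K$, so ``$\mu_{G_n}$ tracks $\mu_{p_n}$'' would not yield a contradiction even if it were true. The paper's mechanism is different: it shows that with non-negligible probability (Corollary \ref{C:in-prob-borel}) or infinitely often (Lemma \ref{L:cond-as}) some \emph{middle} term $\xi_jp_j$ with $j\in[n/4,n/2]$ dominates all others on a curve enclosing $D_r$. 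Then Rouch\'e forces $G_n$ to have at most $j\le n/2$ zeros inside $D_r$, so $\mu_{G_n}(D_r)\le 1/2$ for arbitrarily large $r$, and no weak limit can exist.
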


As discussed above, the sufficiency of condition \eqref{E:prob-cond} for Theorem \ref{T:extremal-p} was proven as Theorem 5.3 in \cite{bloom2018universality}
for Bernstein-Markov measures $\tau$ with regular support $K$. These proofs can be extended to include all cases of Theorem \ref{T:extremal-p} with a few modifications. The proof ideas can also be used to show the convergence in probability in Theorem \ref{T:extremal} when the sequence $\{p_n\}$ has an additional assumption about root concentration or speed of convergence (i.e. see \cite{bloom2018note}).

\medskip

However, the method of \cite{bloom2018universality} does not extend as easily to asymptotically minimal polynomials without any condition on root concentration or speed of convergence, and does
not extend at all to the case of almost sure convergence. 

\medskip

Finally, in Theorem \ref{T:extremal}, the condition of non-degeneracy on the random variables is crucial. The asymptotic structure of the roots can change when the coefficients are degenerate. For example, consider the case when $p_0 = 1$ and $p_n = z^n - z^{n-1}$ for $n \ge 1$. This is a sequence of asymptotically minimal polynomials for the unit circle $C$, but the zero measure of the deterministic sum $\sum_{i=0}^n p_i(z) = z^n$ converges to a $\de$-mass at $0$, rather than the equilibrium measure on $C$. For a less contrived example that does quite not fit into our framework here, see \cite{kabluchko2014asymptotic}, Figure 2.

\subsection{Examples}

In addition to orthonormal polynomials, many other natural classes of polynomials fit into the framework of Theorem \ref{T:extremal}. We give three examples here.

\begin{example}[$L^p$-minimal polynomials and Chebyshev polynomials]
	\label{E:Lp-extremal}
	Let $K$ be a non-polar compact set. Fix a $p \in (0, \infty]$ and a regular measure $\tau$ on $K$. Let $r_n$ be a monic degree-$n$ polynomial on $K$ with minimal $L^p(\tau)$ norm. Note that $r_n$ exists by a compactness argument, but is not necessarily unique when $p < 1$ (see discussion on pg. 84, \cite{stahl1992general}). Then we have
	\begin{equation}
	\label{E:p-regular-2}
	\lim_{n \to \infty} \frac{1}n \log ||r_n||_{L^p(\tau)} = \log \capa (K).
	\end{equation}
	In particular, the normalized sequence $\{q_n = r_n/||r_n||_{L^p(\tau)} : n \in \N\}$ is asymptotically minimal on $K$. 
\end{example}

For $p \in (0, \infty)$, Equation \eqref{E:p-regular-2} follows from the discussion above about regularity. For the Chebyshev polynomial case $p = \infty$, the measure $\tau$ plays no role and Equation \eqref{E:p-regular-2} holds for any non-polar  compact set $K$ (see \cite{tsuji1959potential}, Theorem III.26).

\begin{example}[Fekete polynomials] 
	\label{E:fekete}
	Fix a non-polar compact set $K$. Let $z_{n, 1}, \dots, z_{n, n}$ be Fekete points in $K$ (i.e. a set of $n$ points which maximize the Vandermonde determinant $\Pi_{1 \le i < j \le n} |z_{n, i} - z_{n, j}|$). Let 
	$p_n(z) = \ga_n \Pi_{j=1}^n(z - z_{n, j})$, where $\ga_n \in \R^+$ is a normalizing constant chosen so that $p_n$ has uniform norm equal to $1$ on $K$. Then the sequence $\{p_n\}$ is asymptotically minimal on $K$ (see Theorem 5.4.4, \cite{ransford1995potential}).
\end{example}

\begin{example}[Faber polynomials] 
	\label{E:faber}
	Let $K \sset \C$ be compact and assume that $(\C \smin K) \cup \{\infty\}$ is simply connected in $\C \cup \{\infty\}$. We can define the Faber polynomials $f_n$ on $K$ as follows. Let
	$$
	\Phi:(\C \smin K) \cup \{\infty\} \to (\C \smin \{z : |z| \le 1\}) \cup \{\infty\}
	$$
	be the Riemann mapping satisfying $\Phi(\infty) = \infty$ and $\Phi'(\infty) > 0$. We can write
	$$
	\Phi(z) = \frac{z}{\capa(K)} + a_0 + \frac{a_1}{z} + \dots.
	$$
	Define the $n$th Faber polynomial $f_n(z)$ as the unique polynomial of the form $\Phi^n - g_n$, where $g_n$ contains only negative powers of $z$. After renormalizing $f_n$ to have uniform norm equal to $1$ on $K$, the sequence $f_n$ is asymptotically minimal (see, for example, \cite{levenberg2018zeros}).
\end{example}

\subsection{Remarks about the regularity of $\tau$ and related conditions}
\label{SS:regularity}
(i) Regularity of a measure $\tau$ on $\supp(\tau) = K$ is essentially a statement about how dense the measure $\tau$ is on $K$. Almost all naturally arising measures are regular. For example, when $K$ is regular (that is, the outer boundary of $K$ is regular for the Dirichlet problem), regularity of $\tau$ is implied by the following rather weak density condition (see Proposition 3.3, \cite{bloom2015bernstein}): there exists an $r_0 > 0$ and $t > 0$ such that for any $z$ in the outer boundary of $K$ and any $r < r_0$, we have that
$$
\tau(B(z, r)) \ge r^t.
$$
(ii) A measure $\tau$ with support $K$ has the \textbf{Bernstein-Markov property} if for every $\ep > 0$, there exists a constant $c > 0$ such that for any polynomial $p$ of degree $n$, we have that
$$
||p||_{K} \le ce^{\ep n} ||p||_{L^2(K)},
$$
where $||\cdot||_K$ is the uniform norm on $K$.
Any Bernstein-Markov measure is regular (see discussion on p. 67, \cite{stahl1992general}). Also, for a regular compact set $K$, regularity of $\tau$ and the Bernstein-Markov property are equivalent (see Theorem 3.2.3, \cite{stahl1992general}). Much of the previous work on complex zeros of random sums of orthonormal polynomials has focussed on the case when $\tau$ has regular support (i.e. \cite{bloom2018universality, bayraktar2013equidistribution}), where the distinction between regular and Bernstein-Markov measures is unnecessary. 

\subsection*{The multivariable and weighted cases}

The global zero distribution of random polynomials has also been studied in other contexts. 
In particular, much of the potential theory used in the study of one-variable random polynomials can be adapted to the multivariable setting (see, for example, \cite{bloom2006zeros, bayraktar2013equidistribution, bayraktar2017global, bloom2007random, bloom2015random}). 
However, the properties of asymptotically minimal polynomials and orthonormal polynomials used for the proofs in this paper do not have obvious multivariable equivalents. 
It is thus unclear if the methods we employ here can be fully extended to this context. 

\medskip

Another interesting extension is to the case of random sums of orthonormal polynomials in the presence of an external field (i.e. random sums of the first $n$ orthonormal polynomials in $L^2(w^{2n}\tau)$ for a weight function $w:\C \to [0, \infty)$ and a compactly supported probability measure $\tau$). This case has been analyzed when the underlying polynomials have circular symmetry and hence can be computed somewhat explicitly (i.e. see \cite{bayraktar2017global, bloom2018universality,  kabluchko2014asymptotic}), but understanding the general case here seems quite difficult.

\subsection*{Organization of the paper and a brief discussion of the proofs}

In Section \ref{S:prelim}, we introduce the necessary background for the paper. In Section \ref{S:det-cvg}, we prove a deterministic convergence criterion for zero measures (up to one result, which we leave to the appendix). Letting $\tau$ be regular measure on a compact set $K$, $p \in (0, \infty]$, and   $\{p_n = \sum_{i=0}^n a_{n, i} z^i\}$, our deterministic criterion allows us to prove convergence of the zero measures $\mu_{p_n} \to \mu_K$ whenever both the ratio $||p_n||_{L^p(\tau)}/|a_{n, n - i_n}|$ is asymptotically minimal for some sequence $i_n = o(n/\log n)$, and the values of $p_n$ on the interior of the polynomially convex hull of $K$, $\Int (P(K))$, are sufficiently large in modulus.
\medskip

Similar criteria have appeared before when $p_n$ is renormalized by the leading coefficient $a_{n, n}$ instead of a close-to-leading coefficient $a_{n, n - i_n}$ (see, for example, \cite{bloom2007random}, Theorem 1.2 or \cite{blatt1988jentzsch}, pp. 309-310). However, the random polynomials $G_n$ that we consider may have leading coefficients which are extremely small with a non-negligible probability (i.e. if $\expt \log^- |\xi_0| = \infty$) so theorems that only allow renormalization by the leading coefficient cannot be applied to our case.

\medskip

In Section \ref{S:sufficiency}, we use this deterministic criterion to prove the sufficiency of the conditions \eqref{E:as-cond} and \eqref{E:prob-cond} for Theorem \ref{T:extremal}. To check the criterion, the key step is using a small ball probability estimate adapted from a result of Nguyen and Vu \cite{nguyen2011optimal} to show that the values of $G_n$ on $\Int (P(K))$ are sufficiently large.

\medskip

This estimate was also used in \cite{bloom2018universality} to establish convergence of $\mu_{G_n}$ 
in the case when $p_j(z) = a_j z^j$. To apply the Nguyen-Vu result it suffices to show that for almost every $z \in \Int(P(K))$, 
the sequence $\{p_0(z), p_1(z), \dots\}$ is sufficiently spread out. This is done via Cartan's estimate and understanding the structure of the polynomials $\{p_i\}$.

\medskip

In Section \ref{S:necessity}, we prove the necessity statements in Theorem \ref{T:extremal}. To do this, we show that if the random variables $\xi_i$ fail to satisfy the required moment conditions, then one term $\xi_j p_j$ for $j \in [n/4, n/2]$ in the sum for $G_n$ dominates the others with an asymptotically non-negligible probability. By Rouch\'e's theorem, this forces at least half of the zeros of $G_n$ to lie outside a disk of arbitrarily large radius. This will imply that the zero measures $\mu_{G_n}$ have no weak limit.

\medskip

A similar idea was used to prove sufficiency in \cite{ibragimov2013distribution} (see also \cite{pritsker2017equidistribution}). However, unlike in those papers, our proof requires very little knowledge of the structure of the polynomials and relies mostly on understanding of the sequence of random variables $\{\xi_i\}.$

\subsection*{Notation}
Throughout the paper, we use the following notation. For a compact set $K$, let $U(K)$ be the unbounded component of $K^c$, and let $P(K) = U(K)^c$ be the polynomially convex hull of $K$ (i.e. $K$ with its holes filled in). We write $\del P(K)$ for the outer boundary of $K$. All sequences of deterministic polynomials we consider will be written as $\{p_n = \sum_{i=0}^n a_{n, i} z^i\}$. We will let $\mu_n$ denote the zero measure of $p_n$. The polynomial $G_n = \sum_{i=0}^n \xi_i p_i$ will always refer to a random polynomial where $\{p_n\}$ is a sequence of asymptotically minimal polynomials, and the sequence $\{\xi_i\}$ is an i.i.d.\ sequence of complex non-degenerate random variables. We will also write $G_n = \sum_{i=0}^n \zeta_{n, i} z^i$ for the decomposition of $G_n$ into a sum of monomials. The coefficient $\zeta_{n,i} = \sum_{j=i}^n \xi_j a_{j, i}$.

\medskip

As discussed at the beginning of the introduction, $G_n$ is of degree $D_n = \sup \{i \le n :  \xi_i \ne 0 \}$, and the leading coefficient is $\xi_{D_n} a_{D_n, D_n}$. However, the non-degeneracy of $\xi_i$ guarantees that $D_n/n \to 1$ almost surely, and no parts of the proofs are affected by this discrepancy between $D_n$ and $n$. Hence we will treat $G_n$ as if it were always degree $n$ in order to avoid carrying the $D_n$ notation throughout the paper.

\section{Preliminaries}
\label{S:prelim}

\subsection{Convergence of random measures}

For a random polynomial $G_n$, we will often write $G_n(z, \om)$, where $\om$ is a point in a background probability space $\Om$. We do this so we can more easily distinguish between two types of almost sure statements: one for almost every $\om \in \Om$, and one for (Lebesgue)-almost every $z$ in some subset of $\C$.

\medskip

Recall that a sequence of probability measures $\mu_n$ on $\C$ converges weakly to $\mu$ if for every continuous, bounded function $f: \C \to \R$, we have that
$$
\lim_{n \to \infty} \int_\C f d \mu_n = \int_\C f d \mu.
$$
The random measures $\mu_{G_n} \to \mu$ almost surely if $\mu_{G_n}(\om)$ converges weakly to $\mu$ for almost every $\om \in \Om$. The random measures
$
\mu_{G_n} \to \mu
$
in probability if for every weakly open set $\mathcal{O}$ containing $\mu$, we have that
$$
\lim_{n \to \infty} \prob(\mu_{G_n} \in \mathcal{O}) = 1.
$$
Equivalently, $\mu_{G_n} \to \mu$ in probability if for every subsequence $J \sset \N$, there is a further subsequence  $J_0 \sset J$ such that $\{\mu_n : n \in J_0\}$ converges to $\mu$ almost surely.

\subsection{Potential Theory}
\label{SS:pot-theory}
Let $D \sset \C \cup \{\infty\}$ be an open set. A function $u:D \to [-\infty, \infty)$ is \textbf{subharmonic} on $D$ if $u$ is upper semicontinuous, and if $u$ satisfies the sub-mean inequality. That is, for every $z \in D$ there exists a radius $\rho > 0$ such that
$$
u(z) \le \frac{1}{2\pi} \int_0^{2\pi} u(z + re^{it})dt, \qquad \text{for all } r \in (0, \rho].
$$
If $f$ is analytic on $D$ then $\log |f|$ is subharmonic.

\medskip

Now let $K \sset \C$ be a compact set and let $\mu$ be a probability measure on $K$. Define the \textbf{logarithmic potential} of $\mu$ by the formula
$$
p_\mu(z) = -\int \log |z - x| d\mu(x).
$$
For a probability measure $\mu$, the function $-p_\mu$ is always subharmonic on $\C$, and harmonic outside of $\supp(\mu)$. In particular, if $\mu$ is the zero measure of a degree-$n$ monic polynomial $q$, then 
\begin{equation*}
\frac{1}n \log |q(z)| = - p_\mu(z).
\end{equation*}
We note here that a measure is uniquely determined by its logarithmic potential (this can be seen by noting that applying the distributional Laplacian to $-p_\mu$ gives the measure $2\pi \mu$).

\medskip

 The \textbf{(logarithmic) energy} of $\mu$ is given by
$$
I(\mu) = \int p_\mu(z) d\mu(z) = - \int \int  \log |z - x| d\mu(x)d\mu(z).
$$
For a compact set $K$, set 
\begin{equation}
\label{E:AK}
A_K = \inf \{ I(\mu) : \mu \text{ is a probability measure on $K$}\}.
\end{equation}
We then define the \textbf{(logarithmic) capacity} of $K$ by
$$
\capa(K) = e^{-A_K}.
$$
A set $K$ is \textbf{polar} if $\capa(K) = 0$ (i.e. $A_K = \infty$). Any polar set has Lebesgue planar measure zero. We say that a property holds \textbf{quasi-everywhere} on a set $D \sset \C$ if it holds for all $z \in D$ outside of a polar set.

\medskip

 Note that any compact set $K$ has finite capacity. When $K$ is a non-polar compact set, there is a unique probability measure $\mu_K$ on $K$ that attains the infimum in \eqref{E:AK}. This is known as the \textbf{equilibrium measure}. It is always supported on the outer boundary $\del P(K)$, and hence its potential is harmonic on both $U(K)$ (the unbounded complement of $K$) and on $\Int (P(K))$ (the interior of the polynomially convex hull of $K$). We will use the following characterization of $\mu_K$.
 
 \begin{theorem}[see Theorem A.1, \cite{stahl1992general}]
\label{T:eq-def}
Let $K$ be a non-polar compact subset of $\C$. Then $\mu_K$ satisfies
\begin{enumerate}[label=(\roman*)]
\item $p_{\mu_K}(z) \le - \log \capa(K) \quad \text{for } z \in \C.$
\item $
p_{\mu_K}(z) = - \log \capa(K) \quad \text{for every } z \in \Int P(K) \text{ and quasi-every } z \in \del P(K).
$
\end{enumerate}
Moreover, $\mu_K$ is the only measure satisfying these properties.
\end{theorem}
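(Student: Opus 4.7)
The plan is to prove the characterization in two steps: first construct $\mu_K$ as the Frostman energy minimizer and verify (i) and (ii); then derive uniqueness from strict convexity of the energy functional. The space of Borel probability measures on $K$ is weakly compact, and the energy $I$ is weakly lower semicontinuous (the kernel $-\log|z - x|$ is l.s.c.\ and bounded below on the compact set $K \times K$). Since $K$ is non-polar, $A_K < \infty$, so the infimum in \eqref{E:AK} is attained by some $\mu_K$ of finite energy; in particular $\mu_K$ does not charge polar sets.

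The key variational step is as follows. For any probability measure $\nu$ on $K$ with $I(\nu) < \infty$, the map $t \mapsto I((1-t)\mu_K + t\nu)$ is quadratic in $t \in [0,1]$ with minimum at $t = 0$, and differentiating at $0$ yields $\int p_{\mu_K}\, d\nu \ge A_K$. A standard contradiction argument (if $\{z \in K : p_{\mu_K}(z) < A_K - \ep\}$ had positive capacity for some $\ep > 0$, one could support a finite-energy $\nu$ there to obtain a smaller value) shows that $p_{\mu_K}(z) \ge A_K$ quasi-everywhere on $K$. Combined with $\int p_{\mu_K}\, d\mu_K = A_K$, this forces $p_{\mu_K} = A_K$ $\mu_K$-a.e. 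Since $p_{\mu_K}$ is lower semicontinuous, strict inequality at any $z_0 \in \supp(\mu_K)$ would propagate to a neighborhood of positive $\mu_K$-mass, so $p_{\mu_K} \le A_K$ holds everywhere on $\supp(\mu_K)$. Maria's maximum principle for logarithmic potentials then extends this bound to all of $\C$, proving (i).

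For (ii), combining (i) with the quasi-everywhere lower bound already established gives $p_{\mu_K} = A_K$ q.e.\ on $K$, in particular q.e.\ on $\del P(K)$. Next I would show $\supp(\mu_K) \subseteq \del P(K)$: any mass that $\mu_K$ placed on $\Int P(K)$ could be swept by balayage onto $\del P(K)$ without increasing energy, violating minimality unless no such mass were present. Consequently $p_{\mu_K}$ is harmonic on $\Int P(K)$, is bounded above by $A_K$ there by (i), and has boundary values equal to $A_K$ quasi-everywhere on $\del P(K)$; removability of polar exceptional sets for bounded harmonic functions then yields $p_{\mu_K} \equiv A_K$ on $\Int P(K)$.

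For uniqueness, suppose another Borel probability measure $\nu$ on $\C$ satisfies (i) and (ii). First I would verify $\supp(\nu) \subseteq P(K)$ using (i): any mass of $\nu$ in the unbounded component $U(K)$ would create points arbitrarily close to it where $p_\nu > A_K$, contradicting (i) on $\C$. Given $\supp(\nu) \subseteq P(K)$, property (ii) yields $p_\nu = A_K$ at $\nu$-a.e.\ point, so $I(\nu) = A_K$ and $\nu$ is also an energy minimizer. Strict convexity of $I$ on finite-energy probability measures, which follows from positive-definiteness of the logarithmic kernel on signed measures of zero total mass, forces $\nu = \mu_K$. The main obstacle throughout is the careful bookkeeping of the quasi-everywhere statements: propagating pointwise bounds from $\mu_K$-a.e.\ to everywhere on $\supp(\mu_K)$ via lower semicontinuity, extending them to $\C$ via Maria's principle, and solving the Dirichlet problem on $\Int P(K)$ with boundary data defined only off a polar set are the non-trivial passages that require justification at each stage.
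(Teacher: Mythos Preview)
The paper does not actually prove this theorem: it is quoted as a classical result from potential theory with the citation ``see Theorem A.1, \cite{stahl1992general}'' and no further argument. Your outline is the standard Frostman construction (energy minimization, variational inequality, Maria's maximum principle, and strict convexity for uniqueness), and it is essentially the proof one finds in Stahl--Totik or Ransford. So in that sense there is nothing to compare.

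One remark on your uniqueness step. The sentence ``any mass of $\nu$ in the unbounded component $U(K)$ would create points arbitrarily close to it where $p_\nu > A_K$'' is not quite right as stated: a sufficiently spread-out probability measure can have bounded potential on all of $\C$ even with mass outside $P(K)$, so (i) alone does not pin down $\supp(\nu)$. The clean route is simply to observe that in the source (and in the way the paper uses the theorem) uniqueness is asserted among probability measures supported on $K$; then $\capa(P(K)) = \capa(K)$ gives $I(\nu) \ge A_K$, while (i) gives $I(\nu) = \int p_\nu\, d\nu \le A_K$, so $\nu$ is a minimizer and strict convexity finishes. If you really want uniqueness among all compactly supported probability measures on $\C$, you need to combine (i) and (ii) together with the behaviour $p_\nu(z) + \log|z| \to 0$ at infinity to force $p_\nu = p_{\mu_K}$ on $U(K)$ first (via a two-sided maximum-principle argument), and then read off $\supp(\nu) \subseteq P(K)$ from harmonicity.
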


As the equilibrium measure is an essential object of study for us, we restrict our attention to non-polar compact sets for the remainder of the paper.

\section{A deterministic convergence statement}
\label{S:det-cvg}

In this section, we prove the following deterministic result about the convergence of zero measures of polynomials to an equilibrium measure on a compact set, up to one fact which is left to the appendix. We will also prove a few facts about potentials along the way that will be used later in the paper.

\begin{theorem}
\label{T:det-cvg-in}
Let $\{p_n = \sum_{k=0}^n a_{n, k} z^k\}$ be a sequence of degree-$n$ polynomials and let $K \sset \C$ be a non-polar compact set. Suppose that there exists a sequence $i_n = o(\frac{n}{\log n})$ such that the following conditions hold:
\begin{enumerate}[label=(\roman*)]
\item There exists a regular probability measure $\tau$ on $K$ and a $p \in (0, \infty]$ such that
$$
\limsup_{n \to \infty} \frac{1}{n} \log \lf(\frac{||p_n||_{L^p(\tau)}}{|a_{n, n - i_n}|} \rg) \le \log \capa(K).
$$
\item For almost every $z \in \Int(P(K))$, we have that
$$
\liminf_{n \to \infty} \frac{1}{n} \log \lf( \frac{|p_n(z)|}{|a_{n, n - i_n}|} \rg) \ge \log \capa(K).
$$
\end{enumerate}
Then $\mu_n$ converges weakly to $\mu_K$.

\end{theorem}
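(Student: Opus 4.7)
The plan is to reduce the convergence $\mu_n \to \mu_K$ to $L^1_{\mathrm{loc}}$-convergence of the corresponding logarithmic potentials. I would define the subharmonic function
$$v_n(z) := \frac{1}{n}\log|p_n(z)| - \frac{1}{n}\log|a_{n,n-i_n}|,$$
so that the factorization $p_n = a_{n,n}\prod_j (z-z_j^{(n)})$ gives $v_n(z) = \tfrac{1}{n}\log(|a_{n,n}|/|a_{n,n-i_n}|) - p_{\mu_n}(z)$. Once I have $v_n \to -p_{\mu_K}$ in $L^1_{\mathrm{loc}}(\C)$, taking distributional Laplacians and using that a compactly supported probability measure is determined by its logarithmic potential will give $\mu_n \to \mu_K$ weakly.

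For the upper bound, I combine condition (i) with the Bernstein-Markov-type comparison $\|p_n\|_K \le e^{o(n)}\|p_n\|_{L^p(\tau)}$ (a consequence of regularity of $\tau$, via Stahl-Totik) to get $\|p_n\|_K/|a_{n,n-i_n}| \le e^{n\log\capa(K)+o(n)}$. The Bernstein-Walsh inequality on $U(K)$ then yields
$$\limsup_{n\to\infty} v_n(z) \;\le\; \log\capa(K) + g_K(z) \;=\; -p_{\mu_K}(z), \qquad z \in U(K),$$
and the maximum principle applied to $|p_n|$ on each bounded component of $\C\smin K$ yields $\limsup v_n(z) \le \log\capa(K) = -p_{\mu_K}(z)$ on $\Int P(K)$. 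In particular $(v_n)$ is locally uniformly bounded above on $\C$.

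For the lower bound, condition (ii) is exactly $\liminf v_n(z) \ge \log\capa(K) = -p_{\mu_K}(z)$ at Lebesgue-a.e.\ $z \in \Int P(K)$, matching the upper bound there. Combined with the upper bound, standard $L^1_{\mathrm{loc}}$-compactness for subharmonic functions — together with the characterization of $\mu_K$ in Theorem \ref{T:eq-def} (to pin down the limit outside $\Int P(K)$) — forces $v_n \to -p_{\mu_K}$ in $L^1_{\mathrm{loc}}(\C)$, so distributional Laplacians give $\mu_n \to \mu_K$ weakly.

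The main obstacle is that we normalize by $|a_{n,n-i_n}|$ rather than by the leading coefficient $|a_{n,n}|$: the additive shift $\tfrac{1}{n}\log(|a_{n,n}|/|a_{n,n-i_n}|)$ in the decomposition of $v_n$ could \emph{a priori} absorb positive mass of $\mu_n$ escaping to infinity and leave a residual $c\,\delta_\infty$ in the limit. Ruling this out — equivalently, establishing tightness of $\{\mu_n\}$ — requires the hypothesis $i_n = o(n/\log n)$ and an argument combining the symmetric-function identity $|a_{n,n-i_n}/a_{n,n}| = |e_{i_n}(z_1^{(n)},\dots,z_n^{(n)})|$ with condition (i); I expect this is the fact the author defers to the appendix.
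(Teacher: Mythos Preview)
Your overall strategy --- passing to $L^1_{\mathrm{loc}}$-limits of the subharmonic functions $v_n$ and identifying the limit with $-p_{\mu_K}$ --- is a legitimate framework, and it is genuinely different from the paper's. The paper proceeds in two stages: first it proves the case $i_n \equiv 0$ (Theorem~\ref{T:det-cvg}) by combining a tightness lemma based on root-moving (Lemmas~\ref{L:tightness} and~\ref{L:move-in-roots}), a tailored principle of descent (Lemma~\ref{L:p-descent}), and the appendix result Theorem~\ref{T:stahl-mod-2}; then for general $i_n$ it performs a \emph{root exchange} (Proposition~\ref{P:root-exchange}), moving the $s_n \le i_n$ largest-modulus roots of $p_n$ to a fixed point to produce a modified sequence $\{q_n\}$ that satisfies the $i_n=0$ hypotheses and shares all but $o(n)$ roots with $p_n$. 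Your symmetric-function bound $|a_{n,n-i_n}| \le \binom{n}{i_n}|a_{n,n}|\,|z_{n,1}|\cdots|z_{n,i_n}|$ is precisely the key estimate in that proposition, so you have correctly located where $i_n = o(n/\log n)$ enters.

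Two points of your sketch need correction. First, your guess about the appendix is off: what is deferred there is not a tightness argument but Theorem~\ref{T:stahl-mod-2}, the assertion that any subsequential limit $\mu$ of $\{\mu_n\}$ (in the $i_n=0$ normalization) satisfies $p_\mu \ge p_{\mu_K}$ everywhere with equality on $\overline{U(K)}$. This is exactly the step you wave away as ``pin down the limit outside $\Int P(K)$ via Theorem~\ref{T:eq-def}'': the characterization of $\mu_K$ in Theorem~\ref{T:eq-def} does not by itself force the limit potential to match $p_{\mu_K}$ on $U(K)$, and the paper establishes this by a root-moving contradiction argument. Second, your claimed inequality $\|p_n\|_K \le e^{o(n)}\|p_n\|_{L^p(\tau)}$ is the Bernstein--Markov property, which (see Section~\ref{SS:regularity}) is \emph{not} implied by regularity of $\tau$ unless $K$ itself is regular for the Dirichlet problem. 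The paper never compares to $\|p_n\|_K$; its upper bound on potentials comes from the root-moving machinery rather than from Bernstein--Walsh, precisely so as to cover irregular $K$. Your route can probably be salvaged on $U(K)$ via the $n$th-root asymptotic characterizations of regularity in Stahl--Totik, but the maximum-principle extension to bounded components of $\C \setminus K$ (whose boundaries lie in $K$) would then need a separate argument.
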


Note that the second condition is vacuously true if $P(K)$ has empty interior. The proof of Theorem \ref{T:det-cvg-in} consists of two parts. We first show that the theorem holds with $i_n = 0$ for all $n$. This is done in Section \ref{SS:in-0}. We then extend this to all $i_n$ in Section \ref{SS:in-all} by showing how the zero measures of polynomials in the case of general $i_n$ can be related to zero measures of polynomials satisfying the conditions of Theorem \ref{T:det-cvg-in} with $i_n = 0$.

\subsection{The $i_n = 0$ case of Theorem \ref{T:det-cvg-in}}
\label{SS:in-0}

To prove Theorem \ref{T:det-cvg-in}, we first show the following.
\begin{theorem}
\label{T:det-cvg}
Let $\{p_n\}$ be a sequence of degree-$n$ polynomials and let $K \sset \C$ be a non-polar compact set. Suppose that the following conditions hold:
\begin{enumerate}[label=(\roman*)]
\item There exists a regular probability measure $\tau$ on $K$ and a $p \in (0, \infty]$ such that
$$
\lim_{n \to \infty} \frac{1}{n} \log \lf(\frac{||p_n||_{L^p(\tau)}}{|a_{n, n}|} \rg) = \log \capa(K).
$$
\item For almost every $z \in \Int(P(K))$, we have that
$$
\liminf_{n \to \infty} \frac{1}{n} \log \lf( \frac{|p_n(z)|}{|a_{n, n}|} \rg) \ge \log \capa(K).
$$
\end{enumerate}
Then the zero measures $\mu_n$ of $p_n$ converge weakly to $\mu_K$.

\end{theorem}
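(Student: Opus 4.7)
The approach is potential-theoretic. Writing $q_n := p_n/a_{n,n}$ for the monic normalization, set $u_n := \frac{1}{n}\log|q_n| = \int\log|z-w|\,d\mu_n(w)$, so that $\Delta u_n = 2\pi\mu_n$ distributionally, and $u^* := -p_{\mu_K}$, with $\Delta u^* = 2\pi\mu_K$. Weak convergence $\mu_n \to \mu_K$ is then equivalent to $L^1_{\text{loc}}(\C)$ convergence $u_n \to u^*$, and my plan is to show every $L^1_{\text{loc}}$-subsequential limit $u$ of $\{u_n\}$ equals $u^*$. Condition (i), the Bernstein--Markov property (which holds for regular $\tau$ on $K$ by Theorem 3.2.3 of \cite{stahl1992general}), and the Chebyshev lower bound $\|q_n\|_K \ge \capa(K)^n$ together give $\frac{1}{n}\log\|q_n\|_K \to \log\capa(K)$. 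Then the Bernstein--Walsh inequality $|q_n(z)| \le \|q_n\|_K\, e^{n g_K(z)}$ on $U(K)$ (with $g_K$ the Green's function of $U(K)\cup\{\infty\}$ with pole at infinity, so that $\log\capa(K) + g_K = u^*$) plus the trivial bound $|q_n| \le \|q_n\|_K$ on $P(K)$ gives $u_n \le u^* + o(1)$ uniformly on compact subsets of $\C$. In particular, $\{u_n\}$ is precompact in $L^1_{\text{loc}}$, and every subsequential limit $u$ is subharmonic with $u \le u^*$ everywhere by upper semicontinuity.

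The core step identifies the support and total mass of the weak limit $\mu := \frac{1}{2\pi}\Delta u$. Integrating $\log|q_n(z)| = \sum_i \log|z - z_i|$ against $\mu_K$ and using Fubini,
\[ \int_K \log|q_n|\,d\mu_K \;=\; -\sum_i p_{\mu_K}(z_i) \;=\; n\log\capa(K) + \sum_i g_K(z_i), \]
since $-p_{\mu_K} = \log\capa(K) + g_K$ (with $g_K \equiv 0$ on $P(K)$, $g_K > 0$ on $U(K)$). The trivial bound $\int_K \log|q_n|\,d\mu_K \le \log\|q_n\|_K = n\log\capa(K) + o(n)$ thus forces $\int g_K\,d\mu_n = \frac{1}{n}\sum_i g_K(z_i) \to 0$. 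Because $g_K(z) \to \infty$ at infinity, this estimate already yields tightness of $\{\mu_n\}$, and passing to a weak subsequential limit (using lower semicontinuity of $g_K$) gives $\int g_K\,d\mu = 0$, so $\mu(U(K)) = 0$. Condition (ii) combined with pointwise-a.e.\ convergence along a further subsequence gives $u \ge u^* = \log\capa(K)$ a.e.\ on $\Int P(K)$; since $u - u^*$ is subharmonic and non-negative a.e.\ on $\Int P(K)$, upper semicontinuity forces $u = u^*$ there, hence $\mu$ has no mass on $\Int P(K)$ either. Thus $\supp(\mu) \subseteq \partial P(K)$ and $\mu$ is a probability measure.

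Finally, I match $\mu$ with $\mu_K$ through the uniqueness in Theorem \ref{T:eq-def}. Rewriting $u \le u^*$ as $p_\mu \ge p_{\mu_K}$ on $\C$ and using $p_\mu = -\log\capa(K)$ on $\Int P(K)$, lower semicontinuity of $p_\mu$ gives $p_\mu \le -\log\capa(K)$ on $\partial P(K)$, and combined with $p_\mu \ge p_{\mu_K} = -\log\capa(K)$ q.e.\ on $\partial P(K)$, equality holds q.e.\ there. On $U(K)$, $p_\mu$ is harmonic (as $\supp\mu \subseteq \partial P(K)$) with boundary values $\le -\log\capa(K)$ q.e.\ and tending to $-\infty$ at infinity, so the maximum principle extends $p_\mu \le -\log\capa(K)$ throughout $U(K)$. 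Thus $\mu$ satisfies both defining properties of $\mu_K$ in Theorem \ref{T:eq-def}, and uniqueness forces $\mu = \mu_K$. Since every subsequential limit equals $\mu_K$, the full sequence $\mu_n \to \mu_K$ weakly. The most delicate steps are the passage from $\int g_K\,d\mu_n \to 0$ to $\int g_K\,d\mu = 0$ at the weak limit, and the maximum principle argument on $U(K) \cup \{\infty\}$: both require careful handling of the polar exceptional sets on $\partial P(K)$ where $g_K$ or the boundary values of $p_\mu$ may fail to take the expected values.
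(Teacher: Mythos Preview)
Your overall potential-theoretic strategy is sound and, in fact, is close to the classical route taken in earlier work (e.g.\ \cite{bloom2007random, blatt1988jentzsch}). However, there is a genuine gap: you invoke the Bernstein--Markov property for the pair $(K,\tau)$, citing Theorem 3.2.3 of \cite{stahl1992general}, to pass from $\frac{1}{n}\log\|q_n\|_{L^p(\tau)}\to\log\capa(K)$ to $\frac{1}{n}\log\|q_n\|_K\to\log\capa(K)$. As the paper itself notes in Section~\ref{SS:regularity}(ii), that equivalence between regularity of $\tau$ and the Bernstein--Markov property requires $K$ to be a \emph{regular} compact set (i.e.\ $\partial P(K)$ regular for the Dirichlet problem). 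Theorem~\ref{T:det-cvg} assumes only that $K$ is non-polar, so $K$ may have irregular boundary points and Bernstein--Markov can fail. Without it you lose both the uniform upper bound $u_n\le u^*+o(1)$ and the inequality $\int_K\log|q_n|\,d\mu_K\le n\log\capa(K)+o(n)$ that drives your $\int g_K\,d\mu_n\to 0$ argument; essentially every subsequent step rests on these.

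This is precisely why the paper proceeds differently. In place of Bernstein--Markov, the paper uses the root-exchange Lemma~\ref{L:move-in-roots} from \cite{stahl1992general} to obtain tightness and control of the far-away roots (Lemma~\ref{L:tightness}), an adapted principle of descent (Lemma~\ref{L:p-descent}) that does not require a common compact support, and the substantial Theorem~\ref{T:stahl-mod-2} (proved in the appendix, again via root exchange and fine-topology arguments) to get $p_\mu\ge p_{\mu_K}$ on $\C$ with equality on $\overline{U(K)}$ directly. These tools work with only the $L^p(\tau)$ information from condition~(i) and never compare $L^p(\tau)$-norms to sup-norms. A secondary issue in your argument is the step where you push $p_\mu=-\log\capa(K)$ from $\Int P(K)$ to $\partial P(K)$ by lower semicontinuity: boundary points of $P(K)$ need not be approachable from $\Int P(K)$, and indeed $\Int P(K)$ may be empty, in which case condition~(ii) is vacuous and your identification of $\mu$ with $\mu_K$ has no input at all. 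The paper's Theorem~\ref{T:stahl-mod-2} handles this case automatically, since $\overline{U(K)}=\C$ when $\Int P(K)=\emptyset$.
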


Theorem \ref{T:det-cvg} is essentially Theorem \ref{T:det-cvg-in} in the special case when $i_n = 0$. The only difference between the two statements is in the apparent strengthening of condition (i) in Theorem \ref{T:det-cvg}. However, this is only a superficial difference; if condition (i) of Theorem \ref{T:det-cvg-in} holds with $i_n = 0$, then condition (i) of Theorem \ref{T:det-cvg} also holds by \eqref{E:lower-lower}.

\medskip

The first step needed to prove Theorem \ref{T:det-cvg} is to show that the sequence of zero measures $\mu_n$ is tight. 

\begin{lemma}
\label{L:tightness}
Let $p_n$ be a sequence of degree-$n$ polynomials, let $K \sset \C$ be a non-polar compact set, and suppose that assumption (i) of Theorem \ref{T:det-cvg} holds. 
\begin{enumerate}[label=(\roman*)]
\item The sequence $\mu_n$ is tight, and any subsequential limit $\mu$ of $\mu_n$ is supported in $P(K)$. 
\item Let $V \sset U(K)$ be any closed set. Let $x_{n, 1}, \dots, x_{n, \ell(n)}$ be the roots of $p_n$ in $V$. Then
$$
\limsup_{n \to \infty} \frac{1}n \log \prod_{i=1}^{\ell(n)} |x_{n, i}| \le 0.
$$
\end{enumerate}
\end{lemma}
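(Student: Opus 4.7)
I would set $q_n(z) = p_n(z)/a_{n,n}$, the monic polynomial with the same zero measure $\mu_n$. Combining assumption (i) of Theorem \ref{T:det-cvg} with the regularity of $\tau$ (which, via Theorem 3.4.1 of \cite{stahl1992general} together with the paper's remark that regularity is independent of $p \in (0,\infty)$, yields that the minimal $L^p(\tau)$-norm over monic degree-$n$ polynomials is $\capa(K)^n e^{o(n)}$), I obtain the two-sided asymptotic $\|q_n\|_{L^p(\tau)}^{1/n} \to \capa(K)$. The central tool is a root-splitting factorization: for any subset $S$ of the zeros of $q_n$, write $q_n = s_n r_n$ where $s_n$ is monic of degree $|S|$ collecting the $S$-roots. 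Since $|q_n|^p = |s_n|^p|r_n|^p$ pointwise, any uniform bound $\inf_{z \in K}|s_n(z)| \ge L_n$, combined with the regularity lower bound $\|r_n\|_{L^p(\tau)} \ge \capa(K)^{n-|S|} e^{-o(n)}$ applied to the monic polynomial $r_n$, produces the workhorse inequality
\[
L_n \le \capa(K)^{|S|} e^{o(n)}.
\]

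For the tightness assertion in (i), let $M = \sup_{z \in K}|z|$ and take $S_R := \{j : |z_{n,j}| > R\}$ for $R > 2M$. Then $|z - z_{n,j}| \ge R/2$ for every $z \in K$ and $j \in S_R$, giving $L_n \ge (R/2)^{|S_R|}$, and the workhorse inequality forces $(R/(2\capa(K)))^{|S_R|} \le e^{o(n)}$; taking $R$ large yields $|S_R|/n = o(1)$. For part (ii), apply the factorization with $S$ equal to the set of $V$-roots, and split those roots at a threshold $R$ into a near piece with $|x_{n,i}| \le R$ (which lies in the compact set $V \cap \overline{B(0, R)}$, disjoint from $K$ with positive distance $d_R := d(K, V \cap \overline{B(0,R)}) > 0$) and a far piece with $|x_{n,i}| > R$. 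This yields
\[
L_n \ge d_R^{\ell_s(n)} \prod_{|x_{n,i}| > R}(|x_{n,i}|/2),
\]
and plugging into the workhorse bound and taking logarithms controls the sum $\sum_{|x_{n,i}|>R}\log|x_{n,i}|$. Combined with the trivial bound $\sum_{|x_{n,i}|\le R}\log|x_{n,i}| \le \ell_s(n)\log R$ and then iterating with $R \to \infty$, using tightness to force the far-root count $\ell_b^R(n)/n \to 0$, one obtains $\limsup_n \frac{1}{n}\sum_i\log|x_{n,i}| \le 0$.

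For the support-in-$P(K)$ statement in (i), I would combine (ii) with tightness: if a subsequential weak limit $\mu$ charged a compact $V \sset U(K)$ with $\inf_{z \in V}|z| > 1$, then $\sum_{x_{n,i}\in V}\log|x_{n,i}| \ge \ell(n)\log\inf_V|z|$, so (ii) forces $\ell(n)/n \to 0$, contradicting $\mu(V) > 0$. The general case (where $V$ may approach the origin from within $U(K)$) reduces to this via a refined factorization sharpening the pointwise lower bound on $|s_n(z)|$ over $K$. The main obstacle is this sharpening step: when $d(K, V) \le \capa(K)$, the naive distance-based workhorse estimate $(d(K, V)/\capa(K))^{\ell(n)} \le e^{o(n)}$ is non-informative, so one needs a potential-theoretic input replacing $d(K, V)$ by a Green-function-based quantity (essentially involving $g_K(\cdot, \infty)$ for the Green function of $U(K)$ with pole at infinity) to close the argument in full generality.
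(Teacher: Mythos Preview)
Your factorization/workhorse strategy is sound and handles the pure tightness claim (roots escaping to $\infty$) cleanly. However, the gap you flag in the last paragraph is not a cosmetic detail but the heart of the matter, and it also undermines your argument for (ii). After the near/far split at threshold $R$, your bound reads
\[
\frac{1}{n}\sum_i \log|x_{n,i}| \;\le\; \frac{\ell_s(n)}{n}\bigl(\log R - \log d_R\bigr) + \frac{\ell(n)}{n}\log\capa(K) + o(1),
\]
and sending $R \to \infty$ makes the first term worse, not better. To conclude $\limsup \le 0$ you would need $\ell_s(n)/n \to 0$, which is exactly the support-in-$P(K)$ statement you later propose to deduce \emph{from} (ii). So the order you suggest (prove (ii), then bootstrap to the support claim) is circular; the paper proceeds in the opposite order, establishing $\ell(n)=o(n)$ first and then reading off (ii).

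The paper closes the gap via precisely the potential-theoretic input you allude to, quoted as Lemma~\ref{L:move-in-roots} (Lemma~1.3.2 of \cite{stahl1992general}): for any closed $V \subset U(K)$ there exist $a<1$ and $k\in\N$ such that any $k$ points $x_1,\dots,x_k\in V$ can be replaced by points $y_1,\dots,y_k\in P(K)$ with $\bigl\|\prod_j (z-y_j)/(z-x_j)\bigr\|_K \le a$. This replaces your factor $d(K,V)$ in the lower bound on $\inf_K|s_n|$ by a uniform contraction $a^{1/k}$ per root, which is informative no matter how close $V$ sits to $K$; comparing the modified polynomial to the $L^p(\tau)$-minimizer then forces $\ell(n)=o(n)$ directly. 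Your distance-based estimate is essentially the special case of this lemma for $V$ outside the convex hull of $K$ (where one may take $k=1$), so what is missing from your sketch is exactly the citation of this result.
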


To prove Lemma \ref{L:tightness}, we use the following result from \cite{stahl1992general}.

\begin{lemma} (Lemma 1.3.2 from \cite{stahl1992general})
\label{L:move-in-roots}
Let $S$ be a compact subset of $\C$, and let $U$ be the unbounded component of $S^c$. Then for any closed set $V \sset U$, there exists $a < 1$ and $k \in \nat$ such that for any points $x_1, \ddd, x_k \in V$, there exist $k$ points $y_1, \ddd y_k \in P(S)$ for which the rational function
$$
r_k(z) = \prod_{j=1}^k \frac{z - y_j}{z - x_j}
$$
satisfies the inequality $||r_k||_S \le a$ (here $||\cdot||_S$ is the uniform norm on $S$). In particular, when $V$ is outside of the convex hull of $S$, we may take $k = 1$.
\end{lemma}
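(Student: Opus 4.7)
The plan is to handle the two parts of the lemma separately: the special case when $V$ lies outside $\overline{\mathrm{conv}(S)}$, where I expect $k=1$ to suffice via a convex projection argument, and the general case, where I would use potential theory.

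For the special case, I would fix $x \in V$ and take $y = y(x)$ to be the metric projection of $x$ onto $\overline{\mathrm{conv}(S)}$. The standard projection inequality $\mathrm{Re}\langle z - y,\, x - y\rangle \le 0$ for $z \in \overline{\mathrm{conv}(S)}$ expands to $|z-x|^2 \ge |z-y|^2 + |x-y|^2$ for $z \in S$, giving
$$
\frac{|z-y|}{|z-x|} \;\le\; \sqrt{1 - \frac{|x-y|^2}{|z-x|^2}}.
$$
Since $V$ is closed and disjoint from the compact set $\overline{\mathrm{conv}(S)}$, the quantity $|x-y|/|z-x|$ is bounded below on bounded parts of $V$ and approaches $1$ as $|x|\to\infty$ on the unbounded part, yielding a uniform $a < 1$. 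When the projection $y$ fails to lie in $P(S)$ (it may fall on a segment of $\partial\mathrm{conv}(S)$ cutting across a bounded component of $S^c$), I would push $y$ to a nearest extreme point of the relevant face; extreme points of $\mathrm{conv}(S)$ all lie in $S \subset P(S)$, and a triangle-inequality bookkeeping preserves the ratio bound with a slightly worse but still $<1$ constant.

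For the general case, let $g_U(z, \infty)$ denote the Green's function of $U \cup \{\infty\}$ with pole at $\infty$, which vanishes quasi-everywhere on $\partial U = \partial P(S)$. Since $V$ is closed in $\mathbb{C}$ with $V \cap \partial U = \emptyset$ and $\partial U$ is compact, $d(V, \partial U) > 0$, giving a uniform lower bound $g_U(x, \infty) \ge c(V) > 0$ for $x \in V$ (with $g_U(x, \infty) \to \infty$ as $|x|\to\infty$ handling unbounded $V$). A Walsh-type approximation theorem, obtained by discretizing the balayage of $m\,\delta_x$ onto $\partial P(S)$ into point masses, produces for each $m \in \mathbb{N}$ and each $x \in V$ points $y_1, \ldots, y_m \in \partial P(S)$ such that
$$
\Bigl\|\prod_{j=1}^m \frac{z - y_j}{z - x}\Bigr\|_S \;\le\; \bigl(e^{-g_U(x, \infty)} + o(1)\bigr)^m
$$
as $m \to \infty$. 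Taking $m$ large enough that the right side is at most $e^{-mc(V)/2}$ uniformly over $V$ proves the lemma with $k = m$ when all poles coincide; for distinct poles $x_1, \ldots, x_k \in V$, I would either concatenate the per-pole constructions (increasing $k$ by a factor of $m$) or directly discretize the balayage of $\sum_j \delta_{x_j}$ onto $\partial P(S)$.

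The main obstacle I expect is establishing the uniformity of the Walsh-type approximation rate over $V$, particularly when $U$ is multiply connected, so that no single-factor Blaschke representation of the Green's function is available. I would address this via a compactness argument for bounded subsets of $V$ together with the asymptotic $g_U(x, \infty) = \log|x| + O(1)$ at infinity, controlling the discretization errors in the balayage approximation by Bernstein--Walsh type estimates and being careful that all per-pole constants depend only on $V$ and $S$, not on the individual $x \in V$.
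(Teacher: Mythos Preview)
This lemma is not proved in the paper; it is quoted from Stahl--Totik with only the remark that their argument, stated for compact $V$, goes through for closed $V$. So there is no in-paper proof to compare against, and your proposal is an attempt to reconstruct the cited result.

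Your general-case strategy via the Green's function of $U\cup\{\infty\}$ and discretized balayage is the standard route and is sound in outline; the uniformity over $V$ that you flag as the main obstacle is indeed the only substantive work, and your plan (compactness on bounded pieces of $V$ together with the asymptotic $g_U(x,\infty)=\log|x|+O(1)$ at infinity) is the correct way to handle it.

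The special case, however, has a genuine gap: the step ``push $y$ to a nearest extreme point of the relevant face'' can fail. Take $S=\{-1,1,10i\}$, so $P(S)=S$, and $x=-\tfrac12-\tfrac{1}{10}i$, which lies just below the edge $[-1,1]$ of $\mathrm{conv}(S)$. Your projection gives $y=-\tfrac12\notin P(S)$; pushing to the nearest extreme point $y=-1$ yields $|1-(-1)|/|1-x|=2/\sqrt{2.26}>1$, and the other two choices of $y\in S$ are worse. In fact no single $y\in P(S)$ works for this $x$, so the $k=1$ assertion with $y_j\in P(S)$ cannot hold as literally transcribed; the natural target set is $\overline{\mathrm{conv}(S)}$, for which your projection argument already works without modification. (This is harmless for the paper: every application uses only that the $y_j$ lie in a fixed bounded set.) Your projection argument is correct for the statement it actually proves; it is the attempted repair to force $y\in P(S)$ that breaks, and that repair is chasing a target that appears to be miswritten.
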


Note that in \cite{stahl1992general}, the above lemma is stated for the case $V$ compact. However, the proof goes through for all closed $V$.

\begin{proof}[Proof of Lemma \ref{L:tightness}] Without loss of generality, we may assume that $a_{n, n} = 1$. Let $p \in (0, \infty]$ and $\tau$ be given by assumption (i) of Theorem \ref{T:det-cvg}, and let $\{q_n : n \in \N\}$ be the sequence of minimal monic polynomials in $L^p(\tau)$, as in Example \ref{E:Lp-extremal}. Assumption (i) and the regularity of $\tau$ implies that
\begin{equation}
\label{E:q-p}
\lim_{n \to \infty} \frac{1}n \log \lf(\frac{||p_n||_{L^p(\tau)}}{||q_n||_{L^p(\tau)}} \rg) = 0.
\end{equation}
Now define $P(K)_\ep = \{z \in \C : d(z, P(K)) < \ep\}$. To prove both the tightness and subsequential limit claims in (i), it is enough to show that for every $\ep > 0$,
$$
\lim_{n \to \infty} \mu_n(P(K)^c_\ep)=0.
$$
Equivalently, it is enough to show that the number of roots $\ell(n)$ of $p_n$ in $P(K)^c_\ep$ satisfies $\ell(n) = o(n)$.
 Let $x_{n, 1}, \dots, x_{n, \ell(n)}$ denote the roots of $p_n$ in $P(K)^c_\ep$. Let $a < 1, k \in \N$ be as in Lemma \ref{L:move-in-roots} for the sets $V = P(K)^c_\ep$ and $S = K$. For every $i \in \{0, \dots, \floor{\ell(n)/k} - 1\}$, Lemma \ref{L:move-in-roots} implies that we can find points $y_{n, ki + 1}, \dots, y_{n, ki + k} \in P(K)$ such that
\begin{equation*}
\lf \| \prod_{j=1}^k \frac{z - y_{n, ki + j}}{z - x_{n, ki + j}} \rg\|_K \le a.
\end{equation*}
Therefore
\begin{equation}
\label{E:pn-prod}
\begin{split}
\lf \|p_n(z)\prod_{i=1}^{k\floor{\ell(n)/k}} \frac{z - y_{n, i}}{z - x_{n, i}} \rg\|_{L^p(\tau)} &\le \|p_n\|_{L^p(\tau)} \prod_{i=0}^{\floor{\ell(n)/k} - 1} \lf \| \prod_{j=1}^k \frac{z - y_{n, ki + j}}{z - x_{n, ki + j}} \rg\|_K \\
&\le \|p_n\|_{L^p(\tau)} a^{\floor{\ell(n)/k}}.
\end{split}
\end{equation}
As the polynomial on the left hand side above is monic and degree $n$, the minimality of $q_n$ implies that the left hand side of \eqref{E:pn-prod} is bounded below by $||q_n||_{L^p(\tau)}.$ By equation \eqref{E:q-p}, this implies that $\ell(n) = o(n)$, completing the proof of (i).

\medskip

For (ii), observe that since $V$ and $P(K)$ are disjoint closed sets and $P(K)$ is bounded, there exist a constant $c > 0$ such that
$$
\frac{|z - y|}{|z - x|} \le \frac{c}{|x|} \qquad \text{ for every } z, y \in P(K) \mathand x \in V.
$$ 
In particular, this holds for $x = x_{n, i}$ and $y = y_{n, i}$ in \eqref{E:pn-prod}, implying that the left hand side of \eqref{E:pn-prod} is bounded above by
$$
||p_n||_{L^p(\tau)} c^{\ell(n)} \prod_{i=1}^{\ell(n)}|x_{n, i}|^{-1}.
$$ 
Since the left hand side of \eqref{E:pn-prod} is also bounded below by $||q_n||_{L^p(\tau)}$, Equation \eqref{E:q-p} implies that
$$
\lim_{n \to \infty} \frac{1}n \log \lf(c^{-\ell(n)} \prod_{i=1}^{\ell(n)}|x_{n, i}| \rg) \le 0.
$$
Using that $\ell(n) = o(n)$ (proved as part (i)) to remove the $c^{-\ell(n)}$ term proves (ii).
\end{proof}

We also need a version of the principle of descent which we will use repeatedly in the paper to bound logarithmic potentials. This version of the principle of descent is specific to zero measures arising from sequences of polynomials satisfying assumption (i) of Theorem \ref{T:det-cvg}, but eliminates the need for $\{\mu_n\}$ to have a common compact support. 

\begin{lemma}
\label{L:p-descent}
Let $\{p_n\}$ be a sequence of polynomials satisfying assumption (i) of Theorem \ref{T:det-cvg}. Assume that the sequence of zero measures $\mu_n$ converges weakly to a measure $\mu$. Then for any $z \in \C$, and any sequence $z_n \to z$, we have that
$$
\liminf_{n \to \infty} p_{\mu_n}(z_n) \ge p_\mu(z).
$$
\end{lemma}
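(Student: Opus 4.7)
The plan is to reduce to the classical principle of descent for measures with common compact support, handling the possible escape of zeros to infinity through Lemma \ref{L:tightness}(ii). Fix a small $\ep > 0$ and set $V = \{w \in \C : \text{dist}(w, P(K)) \ge \ep\}$, a closed subset of $U(K)$, so that $V^c$ is an open neighborhood of $P(K) \supset \supp(\mu)$ with bounded closure. Since the hypothesis $\mu_n \to \mu$ weakly and Lemma \ref{L:tightness}(i) give $\mu_n(V) \to 0$, I would decompose
$$
p_{\mu_n}(z_n) = \int_{V^c} -\log|z_n - x| \, d\mu_n(x) + \frac{1}{n}\sum_{x_i \in V} -\log|z_n - x_i|,
$$
where $x_1, \ddd, x_n$ denote the zeros of $p_n$ listed with multiplicity.

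For the first integral, the restrictions $\mu_n|_{V^c}$ are supported in the common compact set $\overline{V^c}$ and converge weakly to $\mu$. Each integrand $x \mapsto -\log|z_n - x|$ is lower semicontinuous, uniformly bounded below on $\overline{V^c}$ (since $\overline{V^c}$ and $\{z_n\}$ are bounded), and satisfies $\liminf_n -\log|z_n - x_n| \ge -\log|z-x|$ for any $x_n \to x$. The standard principle of descent (the semicontinuity version of Fatou's lemma for weakly convergent measures with common compact support) therefore gives
$$
\liminf_{n \to \infty} \int_{V^c} -\log|z_n - x| \, d\mu_n(x) \ge \int -\log|z - x| \, d\mu(x) = p_\mu(z).
$$

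For the second sum I would show $\limsup_n \frac{1}{n} \sum_{x_i \in V} \log|z_n - x_i| \le 0$. Pick $M > 0$ so that $P(K) \sset B(0, M/2)$ and $|z_n| \le M/2$ for all large $n$, and partition $V = V_1 \cup V_2$ with $V_1 = V \cap \{|x| > M\}$ and $V_2 = V \cap \{|x| \le M\}$. For roots in $V_2$, the terms $\log|z_n - x_i| \le \log(3M/2)$ are uniformly bounded, and by Lemma \ref{L:tightness}(i) there are only $o(n)$ such roots, so the total contribution is $o(1)$. For roots in $V_1$, the triangle inequality gives $|z_n - x_i| \le \tfrac{3}{2}|x_i|$, hence $\log|z_n - x_i| \le \log|x_i| + \log(3/2)$; Lemma \ref{L:tightness}(ii) applied to the closed set $V_1 \sset U(K)$ bounds $\limsup_n \frac{1}{n} \sum_{x_i \in V_1} \log|x_i| \le 0$, and the additive constants contribute $o(1)$ since $|V_1|/n \le \mu_n(V) \to 0$.

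Combining the two bounds yields $\liminf_n p_{\mu_n}(z_n) \ge p_\mu(z)$. The main technical obstacle is precisely the potential escape of roots to infinity: the classical principle of descent requires the measures to live in a common compact set, whereas here $\mu_n$ may carry a vanishing fraction of mass at arbitrarily large radius. Lemma \ref{L:tightness}(ii) is tailored for exactly this situation, and splitting the escaped roots by modulus is what lets it be applied without losing control of the $\log|z_n - x_i|$ terms.
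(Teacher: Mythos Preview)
Your argument is correct and follows the same strategy as the paper: isolate the roots lying in a fixed compact set, apply the classical principle of descent there, and use Lemma~\ref{L:tightness}(ii) to show that the escaped roots contribute nonnegatively to the potential. The paper implements this slightly differently: rather than restricting $\mu_n$ to an $\ep$-neighbourhood of $P(K)$, it fixes a disk $D_{2r}$ large enough to contain $K$, $\{z_n\}$, and $z$, and then \emph{replaces} the roots outside $D_{2r}$ by points at $z_n+1$ to form a new probability measure $\nu_n$ with common compact support. Because the cutoff already contains the sequence $z_n$, every escaped root is automatically far from $z_n$, which lets the paper skip your $V_1/V_2$ split and work directly with the product $\prod |z_n - y_{n,i}|^{-1}$ and Lemma~\ref{L:tightness}(ii). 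Your restriction-to-$V^c$ route is equally valid; you just pay for the tighter cutoff with one extra decomposition.

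One small glitch: as written, $V_1 = V \cap \{|x| > M\}$ is not closed, so Lemma~\ref{L:tightness}(ii) does not literally apply. Replace $>$ by $\ge$ (or simply apply the lemma to the closed set $\{|x|\ge M\}\sset U(K)$ and note that each $\log|x_i|>0$ there, so the sum over the smaller set $V_1$ is dominated by the sum over the larger one).
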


\begin{proof} We can assume that $a_{n, n} = 1$ for all $n$.
Let $z \in \C$, and let $z_n \to z$. There exists some $r > 0$ such that the compact set $K$, the sequence $\{z_n\}$ and the point $z$ all lie in the disk $D_r = \{z \in \C : |z| \le r\}$. To prove the convergence statement for this sequence, we first approximate $\mu_n$ by a sequence of probability measures supported on $D_{2r}$. Let $y_{n, 1}, \dots, y_{n, k(n)}$ be the zeros of $p_n$ (with multiplicity) in $D_{2r}$ and let $y_{n, k(n) + 1}, \dots, y_{n, n}$ be the roots outside of $D_{2r}$. Define $q_n$ to be the monic degree-$n$ polynomial whose roots are $y_{n, 1}, \dots, y_{n, k(n)}$, with $n - k(n)$ roots at $z_n + 1$. Let $\nu_n$ be the zero measure of $q_n$.

\medskip

By Lemma \ref{L:tightness}, the measure $\mu$ is supported on $K \sset D_{r}$, so $n - k(n) = o(n)$ and hence $\nu_n \to \mu$ weakly. Hence by the usual principle of descent (see Appendix A.III in \cite{stahl1992general})
$$
\liminf_{n \to \infty} p_{\nu_n} (z_n) \ge p_{\mu}(z).
$$
Therefore it is enough to show that
\begin{equation}
\label{E:pot-close}
0 \le \liminf_{n \to \infty} p_{\mu_n}(z_n) - p_{\nu_n}(z_n) = \liminf_{n \to \infty} - \frac{1}n \log |p_n(z_n)| + \frac{1}n \log |q_n(z_n)|.
\end{equation}
By canceling the common roots of $p_n$ and $q_n$, the right hand side of \eqref{E:pot-close} is equal to 
\begin{equation}
\label{E:zn-yni}
 \liminf_{n \to \infty} \frac{1}n \log \prod_{i=k(n) +1}^{n} \frac{1}{|z_n - y_{n, i}|}.
\end{equation}
Since $|y_{n, i}| \ge 2r$ for all $n, i$ and $|z_n| \le r$, there exist $0 < c_1 < c_2$ such that $|z_n - y_{n, i}|/|y_{n, i}| \in [c_1, c_2]$ for all $n, i$. Combining this with the fact that $k(n) = o(n)$, we get that the above liminf is equal to the same liminf with $|z_n - y_{n, i}|$ replaced by $|y_{n, i}|$. Hence by Lemma \ref{L:tightness} (ii), \eqref{E:zn-yni} is bounded below by $0$. 
\end{proof}

To prove Theorem \ref{T:det-cvg}, we will also need one more theorem about asymptotic magnitudes of sequences of polynomials. The proof of this theorem is rather long, and is very similar to parts of the proof of Theorem 1.1.4/Theorem 3.1.1 from \cite{stahl1992general}. As a result, we leave it to the appendix.

\medskip

\begin{theorem}
\label{T:stahl-mod-2}
Let $\{p_n\}$ be a sequence of degree-$n$ polynomials, and let $K \sset \C$ be a non-polar compact set. If assumption (i) of Theorem \ref{T:det-cvg} holds, then for any subsequential limit $\mu$ of $\mu_n$, we have that
$$
p_\mu(z) \ge p_{\mu_K}(z)
$$
for all $z \in \C$, with equality for all $z \in \close{U(K)}$.
\end{theorem}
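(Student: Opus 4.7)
The approach parallels Stahl-Totik's Theorems 1.1.4 and 3.1.1. After normalizing so that $a_{n,n} = 1$ for every $n$ (so that $-p_{\mu_n}(z) = \frac{1}n \log |p_n(z)|$), the main step is to derive a uniform pointwise upper bound on $|p_n|$ across $\C$. I would combine assumption (i) with the regularity of $\tau$ to obtain the $L^\infty$-type estimate $\limsup_n \|p_n\|_K^{1/n} \le \capa(K)$, and then apply the Bernstein-Walsh inequality $|p_n(z)| \le \|p_n\|_K \cdot e^{n g_K(z, \infty)}$ (where $g_K(\cdot, \infty)$ is the Green's function of $U(K)$ with pole at $\infty$, extended by zero on $P(K)$) to conclude
$$
\limsup_{n \to \infty} \frac{1}n \log |p_n(z)| \le \log \capa(K) + g_K(z,\infty) = -p_{\mu_K}(z), \qquad z \in \C.
$$
Equivalently, $\liminf_n p_{\mu_n}(z) \ge p_{\mu_K}(z)$ pointwise on $\C$.

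Next, fix a subsequence along which $\mu_n \to \mu$. Since the $\mu_n$ need not share a common compact support, I would apply Lemma \ref{L:tightness}(ii) to replace $\mu_n$ by a nearby measure $\tilde\mu_n$ supported in a fixed disk (exactly as in the proof of Lemma \ref{L:p-descent}), without affecting the weak limit or the potentials asymptotically. The classical lower envelope theorem for logarithmic potentials (due to Brelot and Cartan) then yields $\liminf_n p_{\mu_n}(z) = p_\mu(z)$ for quasi-every $z \in \C$. Combined with the previous step,
$$
p_{\mu_K}(z) \le \liminf_n p_{\mu_n}(z) = p_\mu(z)
$$
quasi-everywhere. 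Because $p_\mu$ and $p_{\mu_K}$ are lower semicontinuous superharmonic functions, an inequality valid outside a polar set extends to every $z \in \C$ by standard potential theory.

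For the equality on $\close{U(K)}$, recall that $\supp \mu \sset P(K)$ by Lemma \ref{L:tightness}(i), so $p_\mu$ is harmonic on $U(K)$; likewise for $p_{\mu_K}$. Their difference $h := p_\mu - p_{\mu_K}$ is then harmonic on the compactified domain $U(K) \cup \{\infty\}$, nonnegative on $U(K)$, and satisfies $h(z) \to 0$ as $|z| \to \infty$, since both potentials have the same leading asymptotic $-\log|z|$. The maximum principle applied to $-h$ on $U(K) \cup \{\infty\}$ forces $h \equiv 0$ on $U(K)$, and lower semicontinuity of $p_\mu$ and $p_{\mu_K}$ extends the equality to $\close{U(K)}$.

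The main obstacle is the first step: translating the $L^p(\tau)$-bound from assumption (i) into the quantitative estimate $\limsup_n \|p_n\|_K^{1/n} \le \capa(K)$. When $K$ is itself regular for the Dirichlet problem this is immediate, since regularity of $\tau$ is then equivalent to the Bernstein-Markov property. In general, however, regularity of $\tau$ is strictly weaker than Bernstein-Markov, and one must argue more carefully using finer characterizations of regular measures; this is precisely the technical content shared with Stahl-Totik's proofs of Theorems 1.1.4 and 3.1.1, and the reason the proof is deferred to the appendix.
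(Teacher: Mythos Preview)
Your outline is sound when $\tau$ has the Bernstein--Markov property, but the step you flag as the ``main obstacle'' is not just technically delicate in the general case---it is the wrong target. You propose to upgrade the $L^p(\tau)$ control from assumption (i) to the sup-norm bound $\limsup_n \|p_n\|_K^{1/n} \le \capa(K)$ and then invoke Bernstein--Walsh. But for a regular measure $\tau$ on an irregular set $K$, regularity is strictly weaker than Bernstein--Markov, and no such uniform sup-norm bound is available; the ``finer characterizations of regular measures'' you allude to do not furnish one. So the first step of your argument does not go through in the generality claimed.

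The paper's proof sidesteps this entirely. It never establishes a sup-norm bound. Instead it argues by contradiction directly at the level of potentials on $\partial P(K)$: if $-p_\mu(z_0) < \log\capa(K)$ at some boundary point, then a fine-topology argument locates mass of $\mu$ inside the sublevel set $A_\delta$, and the root-moving Lemma~\ref{L:move-in-roots} is used to replace $o(n)$ roots of $p_n$ lying in $A_\delta$ by roots in $P(K)$. This produces monic degree-$n$ polynomials $r_n$ whose $L^p(\tau)$-norm is strictly smaller than that of the $L^p(\tau)$-minimal polynomial $q_n$, contradicting minimality. Regularity of $\tau$ enters only to ensure $\|p_n\|_{L^p(\tau)}/\|q_n\|_{L^p(\tau)}$ grows subexponentially---no sup-norm comparison is needed. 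Once the inequality $p_\mu \ge p_{\mu_K}$ is known quasi-everywhere on $\partial P(K)$, a boundary minimum principle (Tsuji III.28) plus the $\log|z|$ asymptotics at infinity give equality on $\close{U(K)}$, and the principle of domination extends the inequality to all of $\C$. Your harmonic-function argument for the equality on $U(K)$ is fine, but the route to the global inequality is where the two approaches diverge.
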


We can now easily complete the proof of Theorem \ref{T:det-cvg}.

\begin{proof}[Proof of Theorem \ref{T:det-cvg}]
Again, we can assume that $a_{n, n} = 1$ for all $n$. By Lemma \ref{L:tightness}, the sequence $\mu_n$ is tight. Let $\mu$ be a subsequential limit of $\mu_n$ along a subsequence $Y \sset \N$. We have that
$$
p_\mu(z) \le \liminf_{n \to \infty} -\frac{1}{n} \log |p_n(z)| \le -\log (\capa(K)) = p_{\mu_K}(z)
$$
for almost every $z \in \Int(P(K))$. Here the first inequality comes from Lemma \ref{L:p-descent}, the second inequality comes from assumption (ii) of Theorem \ref{T:det-cvg}, and the final equality is by Theorem \ref{T:eq-def}. Combining this with Theorem \ref{T:stahl-mod-2} implies that $p_\mu = p_{\mu_K}$, and hence $\mu = \mu_K$.
\end{proof}

\subsection{The general case of Theorem \ref{T:det-cvg-in}}
\label{SS:in-all}

We now use Theorem \ref{T:det-cvg} to prove the general case of Theorem \ref{T:det-cvg-in}. We start with a proposition that will allow us to exchange a polynomial sequence $p_n$ satisfying the hypotheses of Theorem \ref{T:det-cvg-in} with a sequence that satisfies the hypotheses of Theorem \ref{T:det-cvg-in} with $i_n = 0$, at the expense of moving $o(n/\log n)$ roots.

\begin{prop}
\label{P:root-exchange}
Let $\{p_n(z) = \sum_{k=0}^n a_{n, k} z^k : n \in \N\}$ be a sequence of polynomials satisfying
 conditions (i) and (ii) of Theorem \ref{T:det-cvg-in} for a sequence $i_n = o(n/\log n)$. Then there exists a sequence of polynomials $\{q_n(z) = \sum_{k=0}^n b_{n, k} z^k : n \in \N\}$ satisfying hypotheses (i) and (ii) of Theorem \ref{T:det-cvg} such that $q_n$ and $p_n$ share all but at most $i_n$ roots.
\end{prop}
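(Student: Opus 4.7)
The plan is a root-exchange construction. Set $M := \max_{z \in K} |z|$, fix any $z_0 \in K$, and order the roots of $p_n$ so that $|r_{n,1}| \le \cdots \le |r_{n,n}|$. Define $I_n := \{ j \in \{n - i_n + 1, \ldots, n\} : |r_{n,j}| \ge 2M \}$, so $|I_n| \le i_n$, and set
\begin{equation*}
q_n(z) := p_n(z) \prod_{j \in I_n} \frac{z - z_0}{z - r_{n,j}} = a_{n,n} (z - z_0)^{|I_n|} \prod_{j \notin I_n} (z - r_{n,j}).
\end{equation*}
Then $q_n$ has degree $n$ and leading coefficient $b_{n,n} = a_{n,n}$, and shares $n - |I_n| \ge n - i_n$ roots with $p_n$.

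To verify condition (i) of Theorem \ref{T:det-cvg} for $q_n$: pointwise on $K$, $|z - r_{n,j}| \ge |r_{n,j}|/2$ for $j \in I_n$ (since $|r_{n,j}| \ge 2M \ge 2|z|$), giving $|q_n(z)| \le C^{|I_n|} |p_n(z)| / \prod_{j \in I_n} |r_{n,j}|$ for a constant $C = C(K, z_0)$. Taking $L^p(\tau)$-norms, applying hypothesis (i) of Theorem \ref{T:det-cvg-in}, and using the bound $|a_{n,n-i_n}/a_{n,n}| = |e_{i_n}(r_{n,1}, \ldots, r_{n,n})| \le \binom{n}{i_n} \prod_{\text{top } i_n} |r_{n,j}|$ together with $\prod_{\text{top } i_n} |r_{n,j}| \le (2M)^{i_n - |I_n|} \prod_{j \in I_n} |r_{n,j}|$ (the top-$i_n$ roots outside $I_n$ have magnitude less than $2M$), one obtains $\|q_n\|_{L^p(\tau)}/|a_{n,n}| \le e^{o(n)} \capa(K)^n$. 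Here the hypothesis $i_n = o(n/\log n)$ enters via $\binom{n}{i_n} (2M)^{i_n} = e^{o(n)}$. The matching lower bound is automatic from regularity of $\tau$ applied to the monic polynomial $q_n/a_{n,n}$.

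For condition (ii) of Theorem \ref{T:det-cvg}: for a.e.\ $z \in \Int(P(K))$ with $z \ne z_0$, using $|z - r_{n,j}| \le (3/2)|r_{n,j}|$ for $j \in I_n$ and hypothesis (ii) of Theorem \ref{T:det-cvg-in}, a direct computation yields
\begin{equation*}
|q_n(z)|/|a_{n,n}| \ge c(z, z_0)^{|I_n|} \cdot \frac{|e_{i_n}|}{\prod_{j \in I_n} |r_{n,j}|} \cdot \capa(K)^n e^{-o(n)}
\end{equation*}
for a positive constant $c(z, z_0)$. Since $c(z, z_0)^{|I_n|} = e^{-o(n)}$, the verification reduces to the key estimate
\begin{equation*}
|e_{i_n}(r_{n,1}, \ldots, r_{n,n})| \big/ \prod_{j \in I_n} |r_{n,j}| \ge e^{-o(n)}. \qquad (\star)
\end{equation*}

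I expect $(\star)$ to be the main obstacle, since cancellations in the elementary symmetric polynomial could a priori make $|e_{i_n}|$ much smaller than $\prod_{j \in I_n} |r_{n,j}|$. The key to establishing it is to pair hypothesis (i) of Theorem \ref{T:det-cvg-in} with a matching pointwise lower bound: on $K$ one has $|p_n(z)|/|a_{n,n}| \ge 2^{-|I_n|} \prod_{j \in I_n} |r_{n,j}| \cdot |\prod_{j \notin I_n} (z - r_{n,j})|$, and regularity of $\tau$ applied to the monic polynomial $\prod_{j \notin I_n} (z - r_{n,j})$ of degree $n - |I_n|$ gives $\|\prod_{j \notin I_n} (z - r_{n,j})\|_{L^p(\tau)} \ge \capa(K)^{n - |I_n|} e^{-o(n)}$. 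Combining these with the upper bound $\|p_n\|_{L^p(\tau)}/|a_{n,n}| \le e^{o(n)} \capa(K)^n |e_{i_n}|$ from hypothesis (i) forces $\prod_{j \in I_n} |r_{n,j}| / |e_{i_n}| \le (2 \capa(K))^{|I_n|} e^{o(n)} = e^{o(n)}$, which is $(\star)$.
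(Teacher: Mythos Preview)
Your argument is correct and follows essentially the same route as the paper: construct $q_n$ by replacing the $i_n$ largest roots of $p_n$ that lie outside a fixed disk by bounded ones, bound the pointwise ratio $|q_n|/|p_n|$ on $K$ up to factors of size $e^{o(n)}$, invoke the Vieta bound $|a_{n,n-i_n}| \le \binom{n}{i_n}|a_{n,n}|\prod_{\text{top }i_n}|r_{n,j}|$, and use regularity to close the estimates. The only organizational difference is that the paper normalizes $b_{n,n}$ so that $\|q_n\|_{L^p(\tau)}=\|p_n\|_{L^p(\tau)}$ and then obtains the analogue of your estimate $(\star)$ by squeezing a chain of inequalities into equalities, whereas you keep $b_{n,n}=a_{n,n}$ and prove $(\star)$ directly by applying the regularity lower bound to the monic polynomial $\prod_{j\notin I_n}(z-r_{n,j})$; these are two presentations of the same mechanism.
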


\begin{proof}
Let $\{z_{n, i} : i \in \{1, \dots, n\} \}$ be the roots of $p_n$, ordered so that $|z_{n, 1}| \ge |z_{n, 2}| \ge \dots \ge |z_{n, n}|$. Let $r > 0$ be large enough so that $K \sset D_r = \{z : |z| \le r\}$, and for each $n$ let $t_n$ be the largest natural number such that $|z_{n, t_n}| \ge 2r$. In the case when $|z_{n, 1}| < 2r$, we set $t_n = 0$. Set $s_n = \min(t_n, i_n)$, and define
$$
q_n(z) = b_{n, n} (z - 2r)^{s_n} \prod_{j=s_n +1}^n (z - z_{n, j}),
$$
where the coefficient $b_{n, n}$ is a positive real number, chosen so that $||q_n||_{L^p(\tau)} = ||p_n||_{L^p(\tau)}$. Now observe that for any $z \in K$, we have that
\begin{equation}
\label{E:pn-qn}
\frac{|q_n(z)|}{|p_n(z)|} = \frac{|b_{n, n}||z - 2r|^{s_n}}{|a_{n, n}||z - z_{n, 1}| \dots |z - z_{n, s_n}|} \in \lf[\frac{|b_{n, n}| }{|a_{n, n}||z_{n, 1}| \dots |z_{n, s_n}|}(r/2)^{s_n},  \frac{|b_{n, n}| }{|a_{n, n}||z_{n, 1}| \dots |z_{n, s_n}|}(6r)^{s_n}\rg].
\end{equation}
Therefore the condition that $||q_n||_{L^p(\tau)} = ||p_n||_{L^p(\tau)}$ and the fact that $s_n \le i_n = o(n/\log n)$ implies that
\begin{equation}
\label{E:sn-bd}
\lim_{n \to \infty} \frac{1}n \log \lf(\frac{|b_{n, n}|}{|a_{n, n}||z_{n, 1}| \dots |z_{n, s_n}|}\rg) = 0.
\end{equation}
Moreover, if $s_n < i_n$, then for each of the terms $z_{n, j}$ for $j \in \{s_n +1, \dots,  i_n\}$ we have $|z_{n, j}| \le 2r$. In particular, this implies that the equality in \eqref{E:sn-bd} gives
\begin{equation}
\label{E:in-bd}
\liminf_{n \to \infty} \frac{1}n \log \lf(\frac{|b_{n, n}|}{|a_{n, n}||z_{n, 1}| \dots |z_{n, i_n}|}\rg) \ge 0.
\end{equation}
Now, since the roots of $p_n$ are labelled in decreasing order of magnitude, Vieta's formula implies the bound
\begin{equation*}
|a_{n, n - i_n}| \le {n \choose i_n} |a_{n, n}| |z_{n, 1}||z_{n, 2}| \dots |z_{n, i_n}| .
\end{equation*}
Since $i_n = o(n/\log n)$, we have that 
$$
\log {n \choose i_n} \le i_n \log n = o(n).
$$
Therefore combining the above inequality for $|a_{n, n - i_n}|$ with \eqref{E:in-bd} gives that
\begin{equation}
\label{E:bn-ann}
\liminf_{n \to \infty} \frac{1}n \log \lf(\frac{|b_{n, n}|}{|a_{n, n - i_n}|}\rg) \ge 0.
\end{equation}
Since $q_n$ was chosen to have that same $L^p(\tau)$-norm as $p_n$, condition (i) of Theorem \ref{T:det-cvg-in} for $p_n$  then implies that 
\begin{equation}
\label{E:qn-log}
\limsup_{n \to \infty} \frac{1}{n} \log \lf(\frac{||q_n||_{L^p(\tau)}}{|b_{n, n}|} \rg) \le \limsup_{n \to \infty} \frac{1}{n} \log \lf(\frac{||p_n||_{L^p(\tau)}}{|a_{n, n - i_n}|} \rg)\le \log \capa(K).
\end{equation}
The regularity of $\tau$ implies that the inequalities above must in fact be equalities. This shows that the sequence $\{q_n\}$ satisfies condition (i) of Theorem \ref{T:det-cvg}. 

\medskip

Now, since each of the inequalities in \eqref{E:qn-log} is an equality, the inequality \eqref{E:bn-ann} must also be an equality, with a limit in place of a liminf. Combining this with the bound \eqref{E:pn-qn}, which also holds for all points in $P(K)$, we have that
\begin{equation}
\lim_{n \to \infty} \frac{1}{n} \log \lf( \frac{|p_n(z)||b_{n, n}|}{|a_{n, n - i_n}||q_n(z)|} \rg) = 0 
\end{equation}
for all $z \in P(K)$. This implies condition (ii) of Theorem \ref{T:det-cvg} for $\{q_n\}$.
 \end{proof}
 
 \begin{proof}[Proof of Theorem \ref{T:det-cvg-in}.]
By Proposition \ref{P:root-exchange}, we can relate the sequence $\{p_n\}$ to a sequence $\{q_n\}$ satisfying the conditions of Theorem \ref{T:det-cvg} such that all but at most $i_n$ roots of $q_n$ agree with those of $p_n$. The zero measures of $q_n$ converge weakly to $\mu_K$ by Theorem \ref{T:det-cvg}. Since $i_n = o(n)$, the distance in the weak topology between the zero measures of the sequences $q_n$ and $p_n$ goes to $0$ as $n \to \infty$, so the zero measures of $p_n$ also converge weakly to $\mu_K$.
 \end{proof}

Note that in the proof of Proposition \ref{P:root-exchange},  the second inequality in \eqref{E:qn-log} must in fact be an equality. This follows from \eqref{E:lower-lower}. This leads to a bound on non-leading coefficients of sequences of polynomials that is analogous to the known bound \eqref{E:lower-lower} on the leading coefficients. We state this here as a separate theorem as it may be of independent interest. 

\begin{theorem}
\label{T:non-leaders}
Let $K \sset \C$ be a compact non-polar set and let $\tau$ be a regular measure on $K$. Then for any $p \in (0, \infty]$, any sequence of polynomials $\{p_n(z) = \sum_{k=0}^n a_{n, k} z^k : n \in \N\}$ and any sequence of natural numbers $i_n = o(n/\log n)$, we have that
$$
\liminf_{n \to \infty} \frac{1}n \log \lf(\frac{||p_n||_{L^p(\tau)}}{|a_{n, n - i_n}|}\rg) \ge \log \capa(K).
$$
\end{theorem}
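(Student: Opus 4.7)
The plan is to adapt the root-exchange construction from the proof of Proposition \ref{P:root-exchange} to express $a_{n, n-i_n}$ in terms of the leading coefficient of an auxiliary polynomial with the same $L^p(\tau)$-norm as $p_n$, then invoke the classical lower bound on monic $L^p(\tau)$-minimizers implied by regularity of $\tau$. The algebraic input is that $a_{n, n-i_n}/a_{n,n}$ is, up to sign, the $i_n$-th elementary symmetric polynomial in the roots of $p_n$; the analytic input is that root exchange compensates for exactly the top $i_n$ root moduli modulo an $e^{o(n)}$ error.

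First I would assume $p_n$ has exact degree $n$, order its roots $z_{n,1}, \dots, z_{n,n}$ by decreasing modulus, fix $r > 0$ with $K \sset \{|z| \le r\}$, let $t_n$ be the largest index with $|z_{n,t_n}| \ge 2r$ (and $t_n = 0$ otherwise), and set $s_n = \min(t_n, i_n)$. Define
$$
q_n(z) = b_{n,n}(z - 2r)^{s_n}\prod_{j = s_n + 1}^n (z - z_{n,j}),
$$
with $b_{n,n} > 0$ chosen so that $||q_n||_{L^p(\tau)} = ||p_n||_{L^p(\tau)}$. Running the two-sided comparison of $|q_n(z)/p_n(z)|$ on $K$ from the proof of Proposition \ref{P:root-exchange} — with $(6r)^{s_n}$ and $(r/2)^{s_n}$ absorbed into $e^{o(n)}$ using $s_n \le i_n = o(n/\log n)$ — yields Equation \eqref{E:sn-bd}, and in the case $s_n < i_n$ the strengthening \eqref{E:in-bd}, since each $|z_{n, j}|$ for $s_n < j \le i_n$ is then bounded by $2r$ and so can be inserted into the denominator at the same $e^{o(n)}$ cost.

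Next, from the elementary symmetric polynomial identity I would extract the bound
$$
|a_{n, n - i_n}| \le \binom{n}{i_n}|a_{n,n}|\,|z_{n,1}|\cdots|z_{n,i_n}|,
$$
and use $\log \binom{n}{i_n} \le i_n \log n = o(n)$ to combine with \eqref{E:in-bd} and deduce
$$
\liminf_{n \to \infty}\frac{1}{n}\log\frac{|b_{n,n}|}{|a_{n, n - i_n}|} \ge 0.
$$
Since $q_n/b_{n,n}$ is monic of degree $n$, regularity of $\tau$ — Equation \eqref{E:p-regular} for $p \in (0, \infty)$, and the classical Chebyshev lower bound for $p = \infty$ — gives
$$
\liminf_{n \to \infty}\frac{1}{n}\log\frac{||q_n||_{L^p(\tau)}}{|b_{n,n}|} \ge \log\capa(K).
$$
Adding the last two liminfs and using $||p_n||_{L^p(\tau)} = ||q_n||_{L^p(\tau)}$ delivers the theorem.

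The main obstacle, really just bookkeeping, is ensuring that the two potentially large factors of size $n^{O(i_n)}$ that enter the argument — the binomial coefficient $\binom{n}{i_n}$ and the $s_n$-th powers of constants like $6r$ and $r/2$ from the root-exchange comparison on $K$ — are simultaneously subexponential in $n$. This is precisely the role of the hypothesis $i_n = o(n/\log n)$, and the weaker $i_n = o(n)$ would not suffice at exactly this point.
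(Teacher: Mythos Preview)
Your proposal is correct and follows essentially the same approach as the paper: the paper explicitly states that Theorem~\ref{T:non-leaders} is obtained by the root-exchange construction of Proposition~\ref{P:root-exchange}, and your argument carries out exactly that, extracting \eqref{E:sn-bd}, \eqref{E:in-bd}, the elementary symmetric bound on $|a_{n,n-i_n}|$, and then closing with the monic $L^p(\tau)$-minimizer lower bound from regularity. The only cosmetic difference is that you separate the two $\liminf$'s and add them at the end, whereas the paper reads the conclusion off directly from the fact that the second inequality in \eqref{E:qn-log} is forced to be an equality.
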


In a private communication, Vilmos Totik provided a proof of Theorem \ref{T:non-leaders} prior to us formulating and proving Proposition \ref{P:root-exchange}. Totik also observed that the theorem is no longer true when $i_n \ne o(n/\log n)$. For example, consider the usual Chebyshev polynomials $p(x) = 2\cos(n \arccos(x/2))$ on the compact set $[-2, 2]$.

\medskip

Before moving on, we prove one more necessary deterministic result.

\begin{lemma}
	\label{L:lemma-26}
	Let $\{p_n\}$ be a sequence of asymptotically minimal polynomials on a compact set $K \sset \C$. Then
	\begin{equation*}
	\limsup_{n \to \infty} \frac{1}n \log |p_n(z)| \le 0 \qquad \text{uniformly on compact subsets of $\Int(P(K))$}.
	\end{equation*}
\end{lemma}

\begin{proof}
First, by Lemma \ref{L:tightness} and the asymptotic minimality of $\{p_n\}$, the sequence of zero measures $\{\mu_n\}$ is tight (note that asymptotic minimality implies assumption (i) of Theorem \ref{T:det-cvg}). Letting $\mu$ be any subsequential limit of $\{\mu_n\}$, Theorem \ref{T:stahl-mod-2} implies that
$$
p_\mu(z) \ge -\log \capa (K)
$$
for all $z \in \Int P(K)$. Therefore by Lemma \ref{L:p-descent}, we have that
$$
\limsup_{n \to \infty} \frac{1}n \log |p_n(z)| - \frac{1}n \log |a_{n, n}| \le \log \capa (K)
$$
uniformly on $\Int(P(K))$. The second term on the left hand side above converges to $\log \capa(K)$ by asymptotic minimality, proving the lemma.
\end{proof}
\section{Sufficiency}
\label{S:sufficiency}

In this section we prove the sufficiency statements in Theorem \ref{T:extremal} by checking that the conditions of Theorem \ref{T:det-cvg-in} hold (either almost surely or in probability). We first check assumption (i). We start with two basic lemmas about sequences of random variables.

\begin{lemma}
\label{L:basic-xi}
Let $\{\xi_i : i \in \N\}$ be a sequence of i.i.d.\ complex non-zero random variables, and let
$$
L_n= \max_{i \in \{1, \dots, n\}} \frac{1}n \log |\xi_i|.
$$
\begin{enumerate}[nosep, label=(\roman*)]
\item If $\expt \log(1 + |\xi_0|) < \infty$, then $L_n \to 0$ almost surely.
\item If $\p (|\xi_0| > e^n) = o(n^{-1})$, then $L_n \to 0$ in probability.
\end{enumerate}
\end{lemma}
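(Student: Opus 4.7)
The plan is to write $L_n = \tfrac{1}{n} \log M_n$ for $M_n = \max_{1 \le i \le n} |\xi_i|$ and to treat the two inequalities $\liminf L_n \ge 0$ and $\limsup L_n \le 0$ separately. The lower bound is essentially free in both settings: since the $\xi_i$ are non-zero, $\log |\xi_1|$ is almost surely finite, and $L_n \ge \tfrac{1}{n} \log |\xi_1| \to 0$ almost surely. Hence all of the content lies in establishing the upper bound $\limsup L_n \le 0$ in the mode (almost sure or in probability) dictated by the hypothesis.

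For part (i), I would fix $\ep > 0$ and reduce the goal $L_n \le \ep$ eventually (almost surely) to the statement that $|\xi_i| > e^{\ep i}$ holds for only finitely many $i$. Indeed, once only finitely many indices are exceptional, the remaining $|\xi_i|$ satisfy $|\xi_i| \le e^{\ep i} \le e^{\ep n}$ for all $i \le n$, while the finitely many exceptional values are bounded by a constant, which is itself dominated by $e^{\ep n}$ for $n$ large. By the Borel--Cantelli lemma (the direction that does not need independence), it suffices to check that $\sum_i \p(|\xi_0| > e^{\ep i}) < \infty$. By the standard tail-sum identity, this is equivalent to $\expt \log^+ |\xi_0| < \infty$, which via the inequalities $\log^+ x \le \log(1 + x) \le \log^+ x + \log 2$ is equivalent to $\expt \log(1 + |\xi_0|) < \infty$.

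For part (ii), I would use a direct union bound. For any $\ep > 0$,
$$
\p(L_n > \ep) = \p\bigl(\max_{i \le n} |\xi_i| > e^{\ep n}\bigr) \le n \cdot \p(|\xi_0| > e^{\ep n}).
$$
Writing $m_n = \lfloor \ep n \rfloor$, the hypothesis $\p(|\xi_0| > e^m) = o(m^{-1})$ gives $\p(|\xi_0| > e^{\ep n}) \le \p(|\xi_0| > e^{m_n}) = o(m_n^{-1}) = o(n^{-1})$, and therefore $n \cdot \p(|\xi_0| > e^{\ep n}) \to 0$. Combined with the trivial lower bound, this yields $L_n \to 0$ in probability.

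No serious obstacle is expected: the argument is a clean Borel--Cantelli in part (i) versus a first-moment union bound in part (ii), mirroring the factor-of-$n$ gap between the two moment hypotheses. The one technical point to get right is the equivalence between summability of $\p(|\xi_0| > e^{\ep i})$ and finiteness of $\expt \log(1 + |\xi_0|)$, but this is a standard computation.
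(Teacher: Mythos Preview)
Your proposal is correct and follows essentially the same approach as the paper: Borel--Cantelli for part (i) and a union bound for part (ii). You are slightly more careful than the paper in explicitly treating the lower bound $L_n \ge \tfrac{1}{n}\log|\xi_1| \to 0$ and in spelling out the tail-sum equivalence between $\sum_i \p(|\xi_0| > e^{\ep i}) < \infty$ and $\expt\log(1+|\xi_0|) < \infty$, but these are details the paper simply elides.
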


\begin{proof}
For (i) for $L_n$, the condition on the random variables implies that for every $\ep > 0$,
$$
\sum_{n = 0}^\infty \prob( \log(1 + |\xi_n|) > \ep n) < \infty.
$$
By the Borel-Cantelli Lemma, this implies that for every $\ep > 0$, that
\begin{equation*}
\prob \lf(\frac{1}n \log |\xi_n| > \ep \text{ infinitely often} \rg) = 0.
\end{equation*}
This immediately implies that $L_n \to 0$ almost surely. For (ii), the condition on the random variables and a union bound implies that for $\ep > 0$, we have
$$
\lim_{n \to \infty} \prob( \exists i \in \{0, \dots, n \} \text{ such that } \log(1 + |\xi_i|) > \ep n) \to 0.
$$
This implies that $L_n \to 0$ in probability. 
\end{proof}

\begin{lemma}
\label{L:zeta-bd}
Let $\xi_i$ be a sequence of complex i.i.d.\ non-degenerate random variables, and let $\{a_{n, k} : k \le n \in \{0, 1, \dots\}\}$ be a deterministic triangular array of complex numbers satisfying
\begin{equation}
\label{E:an-cond}
\lim_{n \to \infty} \frac{1}n \log|a_{n, n}| = c
\end{equation}
for some $c \in \R$. Define $\zeta_{n, j} = \sum_{k = j}^n \xi_k a_{k, j}$, and let 
$$
n - I_n = \argmax_{j \in \{\floor{n - \log^2 n}, \dots, n\}} |\zeta_{n, j}|.
$$
If more than one value of $|\zeta_{n, j}|$ obtains the maximum above set $n - I_n$ to be the largest such value of $j$. Then
$$
\liminf_{n \to \infty} \frac{1}n \log |\zeta_{n, n - I_n}| \ge c.
$$
\end{lemma}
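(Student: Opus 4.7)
The plan is to observe that $n-I_n$ is the argmax, so $|\zeta_{n,n-I_n}| \ge |\zeta_{n,j}|$ for every $j$ in the window $W_n := \{\lfloor n-\log^2 n\rfloor,\dots,n\}$. It therefore suffices to exhibit, almost surely for all large $n$, some $j^* \in W_n$ for which $|\zeta_{n,j^*}|$ is at least a fixed constant multiple of $|a_{j^*,j^*}|$; once we have this, the hypothesis $\frac{1}{j^*}\log|a_{j^*,j^*}| \to c$ combined with $j^*/n \to 1$ (since $|W_n| = O(\log^2 n)$) will give the desired $\liminf$ bound, because $\frac{1}{n}\log|a_{j^*,j^*}| = \frac{j^*}{n}\cdot\frac{1}{j^*}\log|a_{j^*,j^*}| \to c$.

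To produce such a $j^*$, I would first extract an anticoncentration constant from non-degeneracy: there exists $\delta > 0$ such that
$$
\inf_{w \in \C} \prob\bigl(|\xi_0 - w| > \delta\bigr) \ge \delta.
$$
If this failed, then for each $k$ we could pick $w_k \in \C$ with $\prob(|\xi_0 - w_k| \le 1/k) > 1 - 1/k$; the $w_k$ would be bounded, and a subsequential limit $w_\infty$ would satisfy $\prob(\xi_0 = w_\infty) = 1$, contradicting non-degeneracy.

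Next, write $\zeta_{n,j} = \xi_j a_{j,j} + C_{n,j}$, where $C_{n,j} := \sum_{k=j+1}^{n}\xi_k a_{k,j}$ is $\mathcal{F}_{j+1}$-measurable for the filtration $\mathcal{F}_{j+1} := \sigma(\xi_{j+1},\dots,\xi_n)$. For all $j$ sufficiently large (say all $j \in W_n$ with $n$ large, using \eqref{E:an-cond}) we have $a_{j,j}\ne 0$, and the anticoncentration estimate gives
$$
\prob\bigl(|\zeta_{n,j}| > \delta|a_{j,j}| \,\big|\, \mathcal{F}_{j+1}\bigr) = \prob\bigl(|\xi_j + C_{n,j}/a_{j,j}| > \delta \,\big|\, \mathcal{F}_{j+1}\bigr) \ge \delta.
$$
Iterating from $j=\lfloor n-\log^2 n\rfloor$ up to $j=n$ via the tower property yields
$$
\prob\Bigl(|\zeta_{n,j}| \le \delta|a_{j,j}|\ \text{for every}\ j \in W_n\Bigr) \le (1-\delta)^{|W_n|} \le (1-\delta)^{\log^2 n}.
$$
This bound is summable in $n$, so by the Borel--Cantelli lemma, almost surely there exists $N(\omega)$ such that for every $n \ge N(\omega)$ some $j^* = j^*(n,\omega) \in W_n$ satisfies $|\zeta_{n,j^*}| > \delta |a_{j^*,j^*}|$.

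Finally, fix $\ep > 0$. Since $\frac{1}{j}\log|a_{j,j}| \to c$, for all $n$ large we have $|a_{j,j}| \ge e^{(c-\ep)j}$ uniformly for $j \in W_n$, hence $\frac{1}{n}\log|a_{j^*,j^*}| \ge (c-\ep)\cdot\frac{j^*}{n} \to (c-\ep)$. Therefore, almost surely,
$$
\frac{1}{n}\log|\zeta_{n,n-I_n}| \ge \frac{1}{n}\log|\zeta_{n,j^*}| \ge \frac{\log\delta}{n} + \frac{1}{n}\log|a_{j^*,j^*}| \ge (c - \ep) - o(1),
$$
and letting $\ep \downarrow 0$ gives the claim. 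The only delicate point is the iterated conditioning step: one must be careful that the events $\{|\zeta_{n,j}| \le \delta|a_{j,j}|\}$ are handled in the correct order (from smallest $j$ upward, against the filtration), so that at each stage the constant $C_{n,j}$ is indeed measurable and the conditional anticoncentration bound applies uniformly in the past. Everything else is routine.
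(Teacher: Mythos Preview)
Your proof is correct and follows essentially the same approach as the paper: both decompose $\zeta_{n,j} = \xi_j a_{j,j} + (\text{something } \mathcal{F}_{j+1}\text{-measurable})$, apply the anticoncentration bound coming from non-degeneracy sequentially across the window $W_n$ to get a probability bound of order $(1-\delta)^{\log^2 n}$, and finish with Borel--Cantelli. The only cosmetic difference is that the paper compares every $|\zeta_{n,j}|$ to a single threshold $e^{n(c-\epsilon)}$ rather than to the $j$-dependent threshold $\delta|a_{j,j}|$ that you use, but the two are equivalent here since $\tfrac{1}{n}\log|a_{j,j}| \to c$ uniformly for $j \in W_n$.
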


\begin{proof}
Let $\scrF_{j, n}$ denote the $\sig$-algebra generated by the random variables $\xi_{j}, \dots, \xi_n$, and let $I_{j, n}$ denote the event where $|\zeta_{n, i}| \le m$ for all $i \in \{j, \dots, n\}$. Also letting $I_{n+1, n}$ denote the whole probability space, we have the telescoping product
\begin{equation}
\label{E:zeta-split}
\p(|\zeta_{n, n - I_n}| \le m) = \prod_{j =  \floor{n - \log^2 n}}^n \frac{\p \big(|\zeta_{n, j}| \le m , I_{j+1, n} \big)}{\p I_{j+1, n}}. 
\end{equation}
Now,
\begin{align*}
\p \big(|\zeta_{n, j}| \le m , I_{j+1, n} \big) &= \E\p( |\zeta_{n, j}| \le m, I_{j+1, n} \;|\; \scrF_{j+1, n}) \\
&= \E\mathbf{1}(I_{j+1, n}) \p( |\zeta_{n, j}| \le m \;|\; \scrF_{j+1, n}) \\
&\le M_j \p I_{j+1, n},
\end{align*}
where $M_j$ is the maximum value of the conditional probability $\p( |\zeta_{n, j}| \le m \;|\; \scrF_{j+1, n})$. Here the second equality follows since $I_{j+1, n}$ is $\scrF_{j+1, n}$-measurable. Therefore the right side of \eqref{E:zeta-split} is bounded above by 
$$
\prod_{j=\floor{n - \log^2 n}}^n M_j.
$$
Now,
\begin{align*}
M_j = \max \p\lf(\lf|\xi_j a_{j, j} - Z \rg| \le m \; \big| \; \scrF_{j+1, n} \rg),
\end{align*}
for an $\scrF_{j+1, n}$-measurable random variable $Z \in \C$.
Since the $\{\xi_i: i \in \N\}$ are independent, $\xi_j$ is independent of $\scrF_{j+1, n}$. Therefore
\begin{align*}
M_j \le \max_{z \in \C} \p \lf( |\xi_j a_{j, j} - z| \le m \rg) = \max_{z \in \C} \p \lf( |\xi_j - z| \le m |a_{j, j}|^{-1} \rg).
\end{align*}
Since the $\xi_i$ are non-degenerate and i.i.d.\, there exists a $\ga > 0$ such that whenever $m |a_{j, j}|^{-1} \le \ga$, the right hand side above is less than or equal to $1- \de$ for some $\de >0$. Hence for any $\ep > 0$, the condition \eqref{E:an-cond} on the coefficients $a_{j,j}$ implies that for all large enough $n$ we have
$$
\p(|\zeta_{n, n - I_n} | \le e^{n(c - \ep)}) \le \prod_{j =  \floor{n - \log^2 n}}^n M_j \le \prod_{j =  \floor{n - \log^2 n}}^n  \max_{z \in \C} \p \lf( |\xi_j - z| \le e^{- \ep n/2} \rg) \le (1 - \de)^{\log^2 n}.
$$
The right hand side above is summable, and hence by the Borel-Cantelli lemma, we have that
$$
\p(|\zeta_{n, n - I_n}| \le e^{n(c - \ep)} \text{ infinitely often}) = 0. 
$$
Taking logarithms and dividing by $n$ then proves the lemma.
\end{proof}

We can now combine Lemma \ref{L:basic-xi} with Lemma \ref{L:zeta-bd} to show that condition (i) of Theorem \ref{T:det-cvg-in} holds for the random asymptotically minimal polynomials that we are working with.

\begin{lemma}
\label{L:basic-bd}
Let $\{p_n = \sum_{i=0}^n a_{n, i} z^i \}$ be a sequence of asymptotically minimal polynomials on a compact set $K$, and let
$$
G_n = \sum_{i=0}^n \xi_i p_i = \sum_{i=0}^n \zeta_{n, i} z^i
$$
be the random polynomials formed from the sequence $\{p_n\}$ for a sequence of i.i.d.\ non-degenerate complex random variables $\xi_i$.
\begin{enumerate}[label=(\roman*)]
\item If $\expt \log(1 + |\xi_0|) < \infty$, then there exists a random sequence of natural numbers $I_n$ such that $I_n = o(n/ \log n)$ almost surely, and such that for almost every $\om \in \Om$,
\begin{equation}
\label{E:asym-Gn}
\lim_{n \to \infty} \frac{1}n \log \lf(||G_n(\cdot, \om)||_{L^p(\tau)} \rg) = 0 \quad \mathand \quad \lim_{n \to \infty} \frac{1}n \log |\zeta_{n, n - I_n}| = - \log \capa(K).
\end{equation}
Here $\tau, p$ are as in Definition \ref{D:asym-ext} for the sequence $\{p_n\}$.
\item If $\p \lf( |\xi_0| > e^n\rg) = o(n^{-1})$, then there exists a random sequence of natural numbers $I_n$ such that $I_n = o(n/ \log n)$ almost surely, and such that both equations in \eqref{E:asym-Gn} hold in probability.
\end{enumerate}
\end{lemma}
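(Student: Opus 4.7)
The plan is to bootstrap between the $L^p(\tau)$-norm of $G_n$ and the value of a carefully chosen coefficient $\zeta_{n,n-I_n}$, using Theorem \ref{T:non-leaders} as the bridge. I define $I_n$ by applying the argmax prescription of Lemma \ref{L:zeta-bd} to the leading-coefficient sequence $\{a_{k,k}\}$ with $c=-\log\capa(K)$, noting that asymptotic minimality guarantees $\frac{1}{k}\log|a_{k,k}| \to -\log\capa(K)$. Then $I_n \in \{0,1,\dots,\lceil \log^2 n\rceil\}$ deterministically, so $I_n = o(n/\log n)$, and Lemma \ref{L:zeta-bd} immediately yields
$$
\liminf_{n\to\infty}\frac{1}{n}\log|\zeta_{n,n-I_n}| \ge -\log\capa(K) \qquad \text{almost surely.}
$$

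For the upper bound on $\|G_n\|_{L^p(\tau)}$, I apply the triangle inequality (the $p$-subadditivity $\|f+g\|_{L^p}^p \le \|f\|_{L^p}^p + \|g\|_{L^p}^p$ when $p<1$, the usual triangle inequality when $p\in[1,\infty)$, and the sup-norm version when $p=\infty$) to obtain
$$
\|G_n\|_{L^p(\tau)} \le C_p\,(n+1)\,\max_{i\le n}|\xi_i|\,\max_{i\le n}\|p_i\|_{L^p(\tau)}.
$$
Asymptotic minimality gives $\|p_i\|_{L^p(\tau)}=e^{o(i)}$, hence $\max_{i\le n}\|p_i\|_{L^p(\tau)}=e^{o(n)}$, while Lemma \ref{L:basic-xi} controls $\max_{i\le n}|\xi_i|$ -- almost surely in case (i) and in probability in case (ii). This yields $\limsup \frac{1}{n}\log\|G_n\|_{L^p(\tau)} \le 0$ in the appropriate mode. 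Now Theorem \ref{T:non-leaders}, applied realization by realization to $G_n$ with the sequence $I_n$ (deterministically bounded by $\log^2 n$, hence $o(n/\log n)$), gives surely
$$
\liminf_{n\to\infty}\frac{1}{n}\log\frac{\|G_n\|_{L^p(\tau)}}{|\zeta_{n,n-I_n}|} \ge \log\capa(K).
$$
Subtracting the upper bound on the $L^p$-norm forces $\limsup \frac{1}{n}\log|\zeta_{n,n-I_n}| \le -\log\capa(K)$, which combined with the lower bound above collapses to $\frac{1}{n}\log|\zeta_{n,n-I_n}| \to -\log\capa(K)$. Plugging this back into the Theorem \ref{T:non-leaders} inequality then forces $\liminf \frac{1}{n}\log\|G_n\|_{L^p(\tau)} \ge 0$, completing both convergences in \eqref{E:asym-Gn}.

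The main obstacle is the choice of the reference coefficient. The natural choice $\zeta_{n,n}=\xi_n a_{n,n}$ fails whenever $\E\log^-|\xi_0|$ is allowed to be infinite, since then $|\xi_n|$ can be arbitrarily small with non-negligible probability and $\frac{1}{n}\log|\xi_n|$ need not tend to $0$. Lemma \ref{L:zeta-bd} sidesteps this by locating a nearby coefficient in a window of length $\log^2 n$ that is, purely by the non-degeneracy of the $\xi_i$, forced to be $\gtrsim |a_{n,n}|$ on an exponential scale; the window is short enough that Theorem \ref{T:non-leaders} can still extract the correct $L^p$-norm asymptotics from it. The a.s.-versus-in-probability dichotomy then reduces entirely to which part of Lemma \ref{L:basic-xi} we feed into the upper-bound step, since the remaining ingredients (Theorem \ref{T:non-leaders} and Lemma \ref{L:zeta-bd}) hold surely and almost surely respectively, regardless of the moment condition.
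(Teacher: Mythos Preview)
Your proposal is correct and follows essentially the same route as the paper: invoke Lemma \ref{L:zeta-bd} to pick $I_n$ and obtain the almost sure lower bound on $|\zeta_{n,n-I_n}|$, use the triangle inequality together with Lemma \ref{L:basic-xi} for the upper bound on $\|G_n\|_{L^p(\tau)}$, and then close the loop via Theorem \ref{T:non-leaders}. Your write-up is slightly more explicit than the paper's about why the bootstrapping forces both limits (rather than just the one-sided bounds), and about the fact that $I_n\le \log^2 n$ holds deterministically so Theorem \ref{T:non-leaders} applies pathwise, but the substance is the same.
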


\begin{proof} We will prove (i) and (ii) together. First, by the definition of asymptotically minimal polynomials (Definition \ref{D:asym-ext}), the random variables $\xi_i$ and the coefficient array $\{a_{n, k}\}$ satisfy the hypotheses of Lemma \ref{L:zeta-bd} with $c = - \log \capa K$, and so there exists a sequence $I_n = o(n/\log n)$ almost surely such that
\begin{equation}
\label{E:logcap-sup}
\liminf_{n \to \infty} \frac{1}n \log \ |\zeta_{n, n - I_n}| \ge - \log \capa K \qquad \as.
\end{equation}
Next, we restrict our attention to $p \in (0, 1]$. Letting $L_n$ be as in Lemma \ref{L:basic-xi}, we have that 
\begin{align*}
\frac{1}n \log ||G_n||_{L^p(\tau)} \le \frac{1}{pn} \log \lf(\sum_{i=1}^n |\xi_i|^p||p_i||^p_{L^p(\tau)} \rg) \le L_n + \frac{1}{pn} \log \lf(\sum_{i=1}^n ||p_i||^p_{L^p(\tau)}\rg).
\end{align*}
By Lemma \ref{L:basic-xi}, the first term on the right hand side above converges to $0$ as $n \to \infty$, either in probability or almost surely, depending on our assumptions on the random variables. The second term on the right hand side converges to zero by the asymptotic minimality of $\{p_i\}$, giving that
\begin{equation}
\label{E:Gn-up}
\limsup_{n \to \infty} \frac{1}n \log ||G_n||_{L^p(\tau)} \le 0,
\end{equation}
either almost surely or in probability depending on our underlying assumptions.
The same computation works for $p \in (1, \infty]$, except that we do not need to raise the $L^p$-norm $||G_n||^p_{L^p(\tau)}$ to the $p$th power before applying the triangle inequality.

\medskip

Combining the bounds in \eqref{E:logcap-sup} and \eqref{E:Gn-up} with the lower bound in Theorem \ref{T:non-leaders} implies that both \eqref{E:logcap-sup} and \eqref{E:Gn-up} must in fact be equalities (either almost surely or probability) with limits in place of the liminf and limsup.
\end{proof}

To establish condition (ii) of Theorem \ref{T:det-cvg-in} on the polynomials $G_n$, we use a result from \cite{bloom2018universality}. This lemma is a corollary of a small ball probability result of Nguyen and Vu \cite{nguyen2011optimal} adapted to proving convergence of logarithmic potentials. 

\begin{lemma}
\label{L:as-criterion} [Lemma 6.2, \cite{bloom2018universality}]
 Let $\{a_{n} : n \in \N \}$ be a sequence of complex numbers such that
\begin{equation}
\label{E:A-limit}
\lim_{n \to \infty} \frac{1}{n} \log \left( \sum_{i=0}^{n} |a_{i}| \right) = a.
\end{equation}
Let $||a^{(n)}||$ be the Euclidean norm of $(a_{0}, \dots , a_{n})$, and let $w_{n, i} = a_{i}/||a^{(n)}||$. Suppose that for any $\epsilon > 0$, there exists a $\delta > 0$ such that for all large enough $n$, the set 
$$
\mathcal{W}_n = \{w_{n, i} : 0 \le i \le n \}
$$
cannot be covered by a union of $n^{2/3 + \delta}$ balls of radius $e^{-\epsilon n}$. If $\{\xi_0, \xi_1, \dots \}$ is a sequence of non-degenerate i.i.d.\ complex random variables, then
$$
\liminf_{n \to \infty} \frac{1}n \log \left| \sum_{i=0}^n \xi_i a_{i} \right| \ge a \quad \text{ almost surely}.
$$

\end{lemma}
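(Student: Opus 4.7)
The plan is to reduce the statement to a quantitative small-ball estimate on the normalized sum and finish with Borel-Cantelli. First, I would factor out the normalization: writing $S_n = \sum_{i=0}^n \xi_i a_i = \|a^{(n)}\|\, T_n$, where $T_n = \sum_{i=0}^n \xi_i w_{n,i}$. Since $\|a^{(n)}\|$ and $\sum_{i=0}^n |a_i|$ differ by at most a factor of $\sqrt{n+1}$ in each direction, hypothesis \eqref{E:A-limit} yields $\frac{1}{n}\log\|a^{(n)}\| \to a$. So it suffices to prove that for every $\epsilon > 0$,
$$
\liminf_{n\to\infty}\frac{1}{n}\log|T_n| \ge -\epsilon \quad \text{almost surely,}
$$
and then send $\epsilon \downarrow 0$ along a rational sequence. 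By the first Borel-Cantelli lemma, the above follows as soon as we establish the summability $\sum_n \p\bigl(|T_n| < e^{-\epsilon n}\bigr) < \infty$.

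The core of the argument is an inverse Littlewood-Offord theorem of Nguyen-Vu type \cite{nguyen2011optimal}. The form I would invoke says that if the coefficients of a linear combination $\sum_i \xi_i w_i$ of non-degenerate i.i.d.\ complex random variables have small-ball concentration
$$
\sup_{z\in\C}\p\bigl(\bigl|\textstyle\sum_i \xi_i w_i - z\bigr| \le r\bigr) \ge n^{-C}
$$
at some scale $r$, then the $w_i$ possess substantial additive structure; concretely, all but $O(n^{2/3+\delta})$ of them must cluster into $O(n^{2/3+\delta})$ balls of a radius comparable to $r$. Taking the contrapositive with $r = e^{-\epsilon n}$ and with the same $\delta$ as furnished by the hypothesis on $\mathcal{W}_n$, the assumed non-coverability of $\mathcal{W}_n$ by $n^{2/3+\delta}$ balls of radius $e^{-\epsilon n}$ forces
$$
\p\bigl(|T_n| < e^{-\epsilon n}\bigr) \le n^{-C}
$$
for any prescribed constant $C$, provided $n$ is large. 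Choosing $C = 2$ produces a summable bound, which together with the reduction above closes the argument.

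The main obstacle will be deploying the Nguyen-Vu theorem in the correct quantitative form for complex, non-Bernoulli variables. Nguyen-Vu's continuous inverse theorem was originally formulated for real coefficients with Bernoulli (or similar) variables, and one must pass to general non-degenerate complex $\xi_i$ by decomposing into real and imaginary parts and exploiting non-degeneracy to extract enough fluctuation in at least one coordinate. The exponent $2/3$ in the covering hypothesis is exactly the Nguyen-Vu threshold below which polynomially large small-ball concentration is impossible, which is precisely the regime needed to make Borel-Cantelli work. Once the appropriate complex form of the inverse Littlewood-Offord bound is available, the remaining steps (normalization, contrapositive, Borel-Cantelli, and sending $\epsilon \downarrow 0$) are routine.
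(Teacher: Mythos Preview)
The paper does not give its own proof of this lemma; it is imported verbatim as Lemma~6.2 of \cite{bloom2018universality}, with only the one-line remark that it is ``a corollary of a small ball probability result of Nguyen and Vu \cite{nguyen2011optimal} adapted to proving convergence of logarithmic potentials.'' Your proposal---normalize by $\|a^{(n)}\|$, reduce via Borel--Cantelli to a small-ball bound $\p(|T_n|<e^{-\epsilon n})\le n^{-C}$, and obtain that bound from the contrapositive of the Nguyen--Vu inverse Littlewood--Offord theorem---is exactly the route the paper alludes to, and you have correctly identified the one nontrivial step (porting the Nguyen--Vu structure theorem to complex coefficients and general non-degenerate complex $\xi_i$).
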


By Lemma \ref{L:as-criterion}, if we can show that for almost every $z \in \Int(P(K))$, the values $\{p_i(z) : i \in \N\}$ are sufficiently well spaced out, then we can prove assumption (ii) of Theorem \ref{T:det-cvg-in}. To do this, we require Cartan's estimate on the measure of the set where a polynomial can take on small values (see \cite{levin1996lectures}, Lecture 11). Here and throughout the remainder of the paper, $\scrM$ is planar Lebesgue measure on $\C$.

\begin{lemma}[Cartan's estimate]
\label{L:poly-estimate}
Let $p$ be a degree $n$ monic polynomial. Then for any $h > 0$,
$$
\scrM \{ z: |p(z)| \le h^n \} \le 25\pi e^2 h^2.
$$
\end{lemma}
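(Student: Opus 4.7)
The plan is to reduce this to Cartan's classical covering lemma for the sublevel set of a monic polynomial, and then bound the area of the resulting disk cover. Specifically, I would invoke the following standard statement (proved in \cite{levin1996lectures}, Lecture 11): for any monic degree-$n$ polynomial $p$ and any $h > 0$, the set $\{z \in \C : |p(z)| \le h^n\}$ can be covered by at most $n$ open disks $B(c_1, r_1), \dots, B(c_m, r_m)$ whose radii satisfy $\sum_{j=1}^m r_j \le 5eh$. The proof of this covering statement is the substantive ingredient; it proceeds by a greedy clustering argument on the zeros of $p$, iteratively identifying the tightest cluster of roots (one maximizing the ratio of cardinality to enclosing radius), replacing it with a single disk of slightly enlarged radius, and using a Vandermonde-type lower bound for $\prod_j |z - z_j|$ off that disk to control what remains.

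Once the covering is in hand, the measure bound is elementary. Since planar Lebesgue measure is subadditive,
$$
\scrM\{z : |p(z)| \le h^n\} \le \sum_{j=1}^m \scrM(B(c_j, r_j)) = \pi \sum_{j=1}^m r_j^2.
$$
Using the elementary inequality $\sum_j r_j^2 \le \bigl(\sum_j r_j\bigr)^2$ for non-negative reals together with the covering bound $\sum_j r_j \le 5eh$ gives
$$
\pi \sum_{j=1}^m r_j^2 \le \pi \bigl(\sum_{j=1}^m r_j\bigr)^2 \le \pi (5eh)^2 = 25 \pi e^2 h^2,
$$
which is the claimed inequality.

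The only nontrivial step is the covering lemma itself, but as this is a classical estimate cited directly from \cite{levin1996lectures}, no further work is needed. I note that the constant $25\pi e^2$ is not sharp and arises from the crude bound $\sum r_j^2 \le (\sum r_j)^2$; a tighter constant could be recovered by Cauchy--Schwarz if one exploited the bound $m \le n$ on the number of disks, but the stated form is more than sufficient for the applications to small ball probability in Section \ref{S:sufficiency}.
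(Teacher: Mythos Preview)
Your proposal is correct and is essentially the standard derivation. Note that the paper itself does not prove this lemma at all: it simply states it with a citation to \cite{levin1996lectures}, Lecture 11, and uses it as a black box. Your argument---quoting the covering form of Cartan's lemma from that reference and then bounding the area via $\sum r_j^2 \le (\sum r_j)^2$---is exactly how one extracts the measure bound from the covering statement, so there is nothing to compare.
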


We can now prove the following preliminary version of assumption (ii) for Theorem \ref{T:det-cvg-in}. 

\begin{prop}
\label{P:on-int}
Let $\{p_n\}$ be a sequence of asymptotically minimal polynomials, and let $\{\xi_i\}$ be a sequence of non-degenerate i.i.d.\ complex random variables. Set $G_n = \sum_{i=0}^n \xi_i p_i$. For almost every $z \in \Int(P(K))$, we have that
\begin{equation}
\label{E:as-want-orthog}
\liminf_{n \to \infty} \frac{1}n \log |G_n(z, \om)| \ge 0\qquad \text{ for almost every } \om \in \Om.
\end{equation}
\end{prop}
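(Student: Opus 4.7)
The plan is to apply the small ball probability lemma (Lemma~\ref{L:as-criterion}) pointwise at each fixed $z \in \Int(P(K))$, with the deterministic coefficients $a_i = p_i(z)$. This reduces matters to two deterministic verifications for almost every such $z$: first, the limit $a_z := \lim_n \tfrac{1}{n}\log\sum_{i=0}^n |p_i(z)|$ exists and is nonnegative; second, for every $\epsilon > 0$ there exists $\delta > 0$ such that for all large $n$, the set $\mathcal{W}_n(z) = \{p_i(z)/\|p^{(n)}(z)\|\}_{i=0}^n$ cannot be covered by $n^{2/3+\delta}$ balls of radius $e^{-\epsilon n}$, where $\|p^{(n)}(z)\|$ denotes the Euclidean norm of $(p_0(z), \dots, p_n(z))$. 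Once both hold, Lemma~\ref{L:as-criterion} immediately yields $\liminf_n \tfrac{1}{n}\log|G_n(z,\om)| \ge a_z \ge 0$ almost surely in $\om$.

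For the first step, I would show $a_z = 0$ for every $z \in \Int(P(K))$. The bound $a_z \ge 0$ is trivial since $p_0 \equiv a_{0,0}$ is a fixed nonzero constant. For $a_z \le 0$, I would establish $\limsup_n \tfrac{1}{n}\log|p_n(z)| \le 0$ via potential theory: Lemma~\ref{L:tightness} gives tightness of the zero measures $\mu_{p_n}$, Theorem~\ref{T:stahl-mod-2} ensures $p_\mu \ge p_{\mu_K}$ for any subsequential limit $\mu$, and Theorem~\ref{T:eq-def} gives $p_{\mu_K}(z) = -\log\capa(K)$ on $\Int(P(K))$. Invoking the principle of descent (Lemma~\ref{L:p-descent}) yields $\liminf_n p_{\mu_n}(z) \ge -\log\capa(K)$, and combining this with $\tfrac{1}{n}\log|p_n(z)| = \tfrac{1}{n}\log|a_{n,n}| - p_{\mu_n}(z)$ and $\tfrac{1}{n}\log|a_{n,n}| \to -\log\capa(K)$ produces the desired upper bound.

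The second step is the main obstacle, which I plan to attack by Cartan's estimate together with a Borel--Cantelli argument. Assume for contradiction that the set of $z$ admitting such a covering for infinitely many $n$ has positive planar measure. Pigeonhole then produces indices $i_1 < \dots < i_m$ with $m \gtrsim n^{1/3 - \delta}$ lying in a single ball, so $|p_{i_k}(z) - p_{i_l}(z)| \le 2 e^{-\epsilon n}\|p^{(n)}(z)\| \le e^{-\epsilon n/2}$ for all $k, l$ and all large $n$ (using $\|p^{(n)}(z)\| = e^{o(n)}$ from the first step). The goal is to produce, for each such configuration, a polynomial $Q_n$ of degree $\le n$ with leading coefficient of magnitude comparable to $a_{n, n}$ and with $|Q_n(z)|$ so small that Cartan's estimate (Lemma~\ref{L:poly-estimate}) gives a bad-set measure bound which, summed over all $O\bigl(\binom{n+1}{m}\, n^{2/3+\delta}\bigr)$ configurations of indices and balls, remains summable in $n$.

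The delicate part will be the construction of $Q_n$: a naive pairwise application of Cartan yields only a constant-in-$n$ measure bound, so the sub-linear threshold $n^{2/3+\delta}$ in the Nguyen--Vu lemma must be leveraged through a carefully chosen combination of the $p_{i_k}$, likely exploiting multiple simultaneous near-collisions in $\mathcal{W}_n(z)$ or a multilinear/determinantal argument, so that $|Q_n(z)|$ decays rapidly enough in $m$ to beat the combinatorial count of configurations. The only structural input from $\{p_i\}$ beyond asymptotic minimality is the magnitude of the leading coefficient, so the argument should go through uniformly for every asymptotically minimal sequence.
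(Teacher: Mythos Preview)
Your overall strategy matches the paper's: apply Lemma~\ref{L:as-criterion} at each fixed $z$, verify $a_z = 0$ via the potential-theoretic tools (Lemma~\ref{L:tightness}, Theorem~\ref{T:stahl-mod-2}, Lemma~\ref{L:p-descent}), and then check the covering hypothesis by Cartan's estimate and Borel--Cantelli. Your first step is essentially identical to the paper's.

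The gap is in the second step. You correctly diagnose that a naive pairwise application of Cartan to $p_{i_k} - p_{i_l}$ for generic indices $i_k, i_l \le n$ gives only a constant measure bound, and you propose to overcome this with a ``multilinear/determinantal'' combination exploiting the $\gtrsim n^{1/3-\delta}$ simultaneous collisions. This is vague, and it is not clear such a construction exists using only the asymptotic-minimality hypothesis. The paper's actual trick is much simpler and you are missing it: rather than using all indices up to $n$, restrict the pigeonhole to the low index window $[n^{3/4}, 2n^{3/4}]$. There are more than $\lfloor n^{3/4}\rfloor$ indices in this window, so if $\mathcal{W}_n(z)$ is covered by $\lfloor n^{3/4}\rfloor$ balls (a stronger assumption than the $n^{2/3+\delta}$ in Lemma~\ref{L:as-criterion}, hence sufficient), two of these low indices $m_1 < m_2$ collide. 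The point is that $p_{m_2} - p_{m_1}$ now has degree $m_2 \le 2n^{3/4}$ while still satisfying $|p_{m_2}(z) - p_{m_1}(z)| \le e^{-\alpha n/2}$. Cartan's estimate then bounds the bad set by roughly $\exp(-\alpha n / m_2) \lesssim \exp(-c\,n^{1/4})$, which easily beats the $O(n^{3/2})$ union bound over pairs $(m_1, m_2)$ and is summable in $n$.

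In short: the missing idea is not a clever multilinear combination but simply restricting the collision search to indices of size $O(n^{3/4})$, so that the colliding polynomial has \emph{low degree} relative to the smallness scale $e^{-\alpha n}$.
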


\begin{proof}
We first show that 
\begin{equation}
\label{E:lim-00}
\lim_{n \to \infty} \frac{1}n \log \lf(\sum_{i=0}^n |p_i(z)| \rg) = 0 
\end{equation}
for every $z \in \Int(P(K))$ outside of the finite set $F = \{z \in \Int(P(K)) : p_i(z) = 0 \mathforall i \in \N\}$. Indeed, since $\sum_{i=0}^n |p_i(z)|$ is nondecreasing in $n$, the $\liminf$ of the right hand side of \eqref{E:lim-00} is nonnegative as long as at least one value of $p_i(z)$ is nonzero. On the other hand, we have
$$
\frac{1}n \log \lf(\sum_{i=0}^n |p_i(z)| \rg) \le \frac{1}n \log \lf((n+1) \max_{i \le n} |p_i(z)| \rg) = \frac{\log (n + 1)}{n} + \frac{1}n \max_{i \le n} \log |p_i(z)|.
$$
The first term on the right hand side above converges to $0$, and the $\limsup$ of the second term is $0$ by Lemma \ref{L:lemma-26}, yielding \eqref{E:lim-00}.

\medskip
Now let $w_n(z) = p_n(z)/||p^{(n)}(z)||$, where $||p^{(n)}(z)||$ is the Euclidean norm of $(p_0(z), p_1(z), \ddd, p_n(z))$. For $z \in \C, n \in \N$, define
$$
\scrW^n_z = \lf \{w_0(z), \dots, w_n(z) \rg\}.
$$
Let $V$ be a compact subset of $\Int(P(K))$. Define
$$
A_{\al, n} = \{z \in V : \scrW^n_z \text{ can be covered by a union of $\floor{n^{3/4}}$ balls of radius $e^{-\al n}$}\}.
$$
We will show that for any $\al > 0$, the set
\begin{align*}
B_{\al} = \Big \{z \in V : z \in A_{\al, n} \quad \text{for infinitely many $n$}\Big\}
\end{align*} 
has Lebesgue measure $0$. Once we have this, Lemma \ref{L:as-criterion} and \eqref{E:lim-00} implies that for every
$$
z \in V \smin \lf( \bigcup_{m \in \N} B_{1/m} \cup F \rg),
$$
the convergence in \eqref{E:as-want-orthog} holds. This set has full Lebesgue measure in $V$. Since $V$ was chosen arbitrarily, this will imply that \eqref{E:as-want-orthog} holds for almost every $z \in \Int(P(K)).$

\medskip

If $z \in \Int(P(K))$ is such that $\scrW^n_z$ can be covered by a union of $\floor{n^{3/4}}$ balls of radius $e^{-\al n}$, then there must be two points $w_{m_1}(z), w_{m_2}(z) \in \scrW_z^n$ with 
$$
n^{3/4} \le m_1 < m_2 \le 2n^{3/4} \qquad \mathand \qquad |w_{m_1}(z) - w_{m_2}(z)| < 2e^{-\al n}.
$$
In other words, $|p_{m_2}(z) - p_{m_1}(z)| < 2||p^{(n)}(z)||e^{-\al n}$. Now by Lemma \ref{L:lemma-26}, for all large enough $n$,
$$
2||p^{(n)}(z)||e^{-\al n} < e^{-\al n/2} \qquad \text{ for all } z \in V.
$$
For such $n$, the set $A_{\al, n}$ is contained in
$$
A_{\al, n}^* = \{z \in V : \text{ there exists $n^{3/4} \le m_1 < m_2 \le 2n^{3/4}$  such that $|p_{m_1}(z) - p_{m_2}(z)| < e^{-\al n/2}$ } \}.
$$
By Cartan's estimate (Lemma \ref{L:poly-estimate}), for any $n^{3/4} \le m_1 < m_2 \le 2n^{3/4}$, we have that 
\begin{equation}
\label{E:cartan-inside}
\scrM\{  z : |p_{m_2}(z) - p_{m_1}(z)| < e^{- \al n/2} \} \le 25 \pi \exp \lf(2 - \frac{\al n}{m_2} - \frac{2}{m_2} \log |a_{m_2, m_2}| \rg).
\end{equation}
The sequence $\{\frac{1}j \log |a_{j, j}|\}$ has a limit by the asymptotic minimality of $\{p_n\}$, and hence is uniformly bounded for large enough $j \in \N$ . Therefore the right hand side of \eqref{E:cartan-inside} is bounded above by $k\exp(-\al n^{1/4})$ 
for some constant $k$ independent of $n, m_2$, and $m_1$. Hence for all large enough $n$, a union bound gives that
$$
\scrM(A_{\al, n}) \le \scrM(A_{\al, n}^*) \le k n^{3/2} \exp(-\al n^{1/4}),
$$
so $\sum_{n=0}^\infty \scrM(A_{\al, n}) < \infty$. By the Borel-Cantelli Lemma, $B_\al$ has Lebesgue measure $0$.
\end{proof}

We can now prove the sufficiency statements in Theorem \ref{T:extremal}.

\begin{proof}
We first prove almost sure convergence under the condition $\expt \log(1 + |\xi_0|) < \infty$. By Lemma \ref{L:basic-bd}, condition (i) of Theorem \ref{T:det-cvg-in} is satisfied for almost every $\om \in \Om$ for the (random) sequence $I_n$ identified in that lemma. Now, let $J(z, \om)$ denote the indicator of event where
$$
\liminf_{n \to \infty} \frac{1}n \log \lf(|G_n(z, \om)| \rg) < 0.
$$
We think of $J$ as a function from the space $\Int(P(K)) \X \Om$ equipped with the product of Lebesgue measure on $\Int(P(K))$ and the natural probability measure $\p$ on $\Om$. Rephrased in this language, Proposition \ref{P:on-int} gives that
$$
\int_{\Int(P(K))} \int_\Om J(z, w) d\p (w) d \scrM(z) = 0.
$$
By Fubini's theorem, this equality is also true with the order of integration reversed and so $\int J(z, w) d \scrM(z) = 0$ for almost every $\om \in \Om$. Therefore for almost every $\om \in \Om$, we have $J(z, w) = 0$ for almost every $z \in \Int(P(K))$, or equivalently,
$$
\liminf_{n \to \infty} \frac{1}n \log \lf(|G_n(z, \om)| \rg) \ge 0 \qquad \text{ for almost every } z \in \Int(P(K)).
$$
The above bound combined with Lemma \ref{L:basic-bd}(i) gives that
$$
\liminf_{n \to \infty} \frac{1}n \log \lf(|\zeta_{n, n - I_n}|^{-1} |G_n(z, \om)| \rg) \ge \log \capa(K) \qquad \text{ for almost every } z \in \Int(P(K)).
$$
Hence condition (ii) of Theorem \ref{T:det-cvg-in} is also satisfied for almost every $\om \in \Om$ with the same sequence $I_n$, and so $\mu_{G_n} \to \mu_K$ almost surely.

\medskip

We now show convergence in probability under the condition $\p(|\xi_0| > e^n) > o(n^{-1})$. Let $I_n$ be the sequence identified in Lemma \ref{L:basic-bd}. By Lemma \ref{L:basic-bd} (ii), for any subsequence $Y \sset \N$ we can find a further subsequence $Y_0 \sset Y$ such that condition (i) of Theorem \ref{T:det-cvg-in} holds almost surely when the corresponding limit is taken over $Y_0$ with the above choice of $I_n$. Also, by the same reasoning as in the almost sure case, condition (ii) of Theorem \ref{T:det-cvg-in} holds almost surely along $Y_0$ with the same choice of $I_n$. Therefore $\{\mu_n: n \in Y_0\}$ converges to $\mu_K$ almost surely. Since $Y$ was arbitrary, $\mu_n \to \mu_K$ in probability.
\end{proof}

\section{Necessity}
\label{S:necessity}
In this section, we prove the necessity statements in Theorem \ref{T:extremal}, completing the proof of that theorem. As discussed in Section \ref{SS:main-results}, we will do this by showing that one term $\xi_ip_i$ dominates the other terms in $G_n$ and applying Rouch\'e's theorem. To do so, we require a few lemmas about the magnitude of i.i.d.\ random variables that fail the moment conditions of Theorem \ref{T:extremal}, and one lemma about the magnitude of polynomials on an annulus.

\medskip

We start with a lemma that will be used to show the necessity of the condition $\expt \log(1 + |\xi_0|) < \infty$ for almost sure convergence.

\begin{lemma}
\label{L:cond-as}
Suppose that $\{\xi_i : i \in \N\}$ is a sequence of i.i.d.\ random variables such that 
$$
\expt \log(1 + |\xi_0|) = \infty \qquad \mathand \qquad \prob(|\xi_0| > e^n) = o(n^{-1}).
$$
Let $A_{n, c}$ be the event where
$$
|\xi_{\floor{n/2}}| \ge e^{(c+1)n}, \mathand \; |\xi_j| < e^{n} \text{ for all } \;\; j \in [0, n], j \ne \floor{n/2}.
$$
For every fixed $c \in \real$, infinitely many of the events $\{A_{n, c} : n \in \N\}$ occur almost surely.
\end{lemma}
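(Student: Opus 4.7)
Since $A_{n, c'} \supseteq A_{n, c}$ whenever $c' \le c$, it suffices to handle $c \ge 0$; fix such a $c$. Write $p_m = \p(|\xi_0| > e^m)$. The hypothesis $\p(|\xi_0| > e^n) = o(n^{-1})$ gives $p_n = o(1/n)$, while $\E \log(1 + |\xi_0|) = \infty$ translates via the layer-cake identity to $\sum_m p_m = \infty$. Since $p_m$ is non-increasing, a routine block comparison then yields $\sum_j p_{Cj} = \infty$ for every constant $C > 0$.

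My plan is to apply a second-moment Borel-Cantelli argument (the Kochen-Stone lemma) to the subsequence $\{A_{2j, c}\}_{j \ge 1}$ of $\{A_{n, c}\}_{n \ge 1}$; infinitely many occurrences of the former already imply the lemma. Noting that $\lfloor 2j/2 \rfloor = j$ and that the ``big'' and ``small'' constraints in $A_{2j, c}$ involve disjoint coordinates, independence of the $\xi_k$ gives
\begin{equation*}
\p(A_{2j, c}) = p_{2(c+1)j} \cdot (1 - p_{2j})^{2j}.
\end{equation*}
The second factor tends to $1$ because $j p_{2j} \to 0$, so combined with $\sum_j p_{2(c+1)j} = \infty$ we obtain $\sum_j \p(A_{2j, c}) = \infty$.

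The heart of the argument is a pairwise near-independence bound. For $m < j \le 2m$ the condition $|\xi_j| < e^{2m}$ imposed by $A_{2m, c}$ is incompatible with $|\xi_j| \ge e^{2(c+1)j} \ge e^{2j} > e^{2m}$ imposed by $A_{2j, c}$ (this is where $c \ge 0$ enters), so $A_{2m, c} \cap A_{2j, c} = \emptyset$. For $j > 2m$, a direct product computation using independence of the $\xi_k$'s yields the crude bound
\begin{equation*}
\p(A_{2m, c} \cap A_{2j, c}) \le p_{2(c+1)m} \cdot p_{2(c+1)j} = \frac{\p(A_{2m, c}) \p(A_{2j, c})}{(1 - p_{2m})^{2m} (1 - p_{2j})^{2j}},
\end{equation*}
and the denominator tends to $1$ uniformly as $m, j \to \infty$. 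Writing $S_N = \sum_{j \le N} \p(A_{2j, c}) \to \infty$, the two cases together give $\sum_{m, j \le N} \p(A_{2m, c} \cap A_{2j, c}) \le (1 + o(1)) S_N^2 + O(S_N)$, so Kochen-Stone produces $\p(A_{2j, c}\ \text{i.o.}) \ge \limsup_N S_N^2 / \sum_{m, j \le N} \p(A_{2m, c} \cap A_{2j, c}) = 1$, which completes the proof. The main obstacle is simply the pairwise intersection bound for $j > 2m$: one must keep careful track of the overlapping $\xi_k$-constraints imposed by both events over $[0, 2m]$, after which everything else reduces to routine tail estimates.
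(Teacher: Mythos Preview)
Your proof is correct and follows essentially the same route as the paper's: both apply the Kochen--Stone lemma, using the tail hypotheses to get $\sum_n \prob(A_{n,c}) = \infty$ and bounding pairwise intersections by the product of the two ``big-coordinate'' probabilities $p_{(c+1)m}\,p_{(c+1)n}$. Your restriction to even $n$ and to $c \ge 0$, together with the disjointness observation for $m < j \le 2m$, are minor cosmetic variations on the paper's bookkeeping (the paper instead handles the near-diagonal pairs $|i-k|\le 1$ directly and uses the crude product bound everywhere).
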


To prove this we will use the following strengthening of the Borel-Cantelli Lemma, due to Kochen and Stone (see \cite{fristedt2013modern}, Chapter 6.2).

\begin{lemma}
\label{L:borel-cant}
Let $\{B_i: i \in \N\}$ be a sequence of events such that
$$
\sum_{i=1}^\infty \prob(B_i) = \infty \qquad \mathand \qquad \limsup_{n \to \infty} \frac{\sum_{i, k = 1}^n \prob(B_i)\prob(B_k)}{\sum_{i, k = 1}^n \prob(B_i \cap B_k)} = L.
$$
Then $\prob(\text{Infinitely many} \; B_i \; \text{occur}) \ge L.$

\end{lemma}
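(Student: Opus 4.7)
The plan is to prove this Kochen–Stone inequality via the standard second-moment method applied to the tail counts $X_n^{(N)} = \sum_{i=N}^n \indic_{B_i}$, then pass to the limit twice (first $n \to \infty$, then $N \to \infty$). The event that infinitely many $B_i$ occur is $\limsup_i B_i = \bigcap_N \bigcup_{i \ge N} B_i$, so it suffices to show $\prob(\bigcup_{i \ge N} B_i) \ge L$ for every fixed $N$ and then apply continuity of measure from above.

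The first step is the Paley–Zygmund / Cauchy–Schwarz bound. For any nonnegative integer-valued random variable $X_n^{(N)}$, Cauchy–Schwarz applied to $X_n^{(N)} = X_n^{(N)} \cdot \indic_{X_n^{(N)} > 0}$ yields
$$
\prob(X_n^{(N)} > 0) \ge \frac{\expt[X_n^{(N)}]^2}{\expt[(X_n^{(N)})^2]} = \frac{\left(\sum_{i=N}^n \prob(B_i)\right)^2}{\sum_{i,k=N}^n \prob(B_i \cap B_k)}.
$$
Since $X_n^{(N)} > 0$ is equivalent to at least one of $B_N, \dots, B_n$ occurring, taking $n \to \infty$ on the left gives $\prob(\bigcup_{i \ge N} B_i) \ge \limsup_{n \to \infty}$ of the right-hand ratio.

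The second step is to verify that truncating the first $N-1$ terms does not change the asymptotic value $L$ of the ratio. Note first that Cauchy–Schwarz forces $L \le 1$, hence finite, and therefore $\sum_{i,k=1}^n \prob(B_i \cap B_k) \to \infty$ (otherwise the boundedness of the denominator together with divergence of the numerator would force $L = \infty$). Replacing the full sums $\sum_{i=1}^n$ and $\sum_{i,k=1}^n$ with the tail sums $\sum_{i=N}^n$ and $\sum_{i,k=N}^n$ modifies the numerator by a factor of $1 + o(1)$ and the denominator by an additive $O(1)$ term (the contributions of index pairs with $i < N$ or $k < N$ are bounded uniformly in $n$). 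Since both the numerator and denominator of the full-sum ratio diverge, the truncated limsup equals $L$ as well. Combining with the first step gives $\prob(\bigcup_{i \ge N} B_i) \ge L$.

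The third step is simply continuity of measure: $\bigcup_{i \ge N} B_i$ is decreasing in $N$ with intersection $\limsup_i B_i$, so $\prob(\limsup_i B_i) = \lim_{N \to \infty} \prob(\bigcup_{i \ge N} B_i) \ge L$. There is no real obstacle here; the only place that requires a moment's care is the verification in step two that the $o(1)$ correction from dropping the initial $N-1$ indices is genuinely negligible, which rests on the divergence of both sums — and that divergence itself follows from $L < \infty$ together with the hypothesis $\sum \prob(B_i) = \infty$.
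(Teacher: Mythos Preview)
The paper does not prove this lemma; it simply cites it as the Kochen--Stone inequality (with a reference to Fristedt--Gray, Chapter 6.2). Your second-moment argument is the standard route to this result and is essentially sound, so in that sense there is nothing to compare.

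There is, however, one genuine slip in your step two. You claim that removing the indices $i,k < N$ changes the denominator $\sum_{i,k=1}^n \prob(B_i \cap B_k)$ by an additive $O(1)$ term, i.e.\ that $\sum_{i < N \text{ or } k < N} \prob(B_i \cap B_k)$ is bounded uniformly in $n$. This is false in general: for instance, if the $B_i$ are independent with $\prob(B_i) = 1/i$, then already $\sum_{k=1}^n \prob(B_1 \cap B_k) = \prob(B_1)\sum_{k=1}^n \prob(B_k) + O(1) \to \infty$. The correct estimate is that this cross term is at most $2(N-1)\sum_{k=1}^n \prob(B_k)$, while by Cauchy--Schwarz the full denominator is at least $\bigl(\sum_{k=1}^n \prob(B_k)\bigr)^2$; hence the correction is $o(\text{denominator})$, not $O(1)$. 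With this fix your argument goes through unchanged: both numerator and denominator are perturbed by factors $1+o(1)$, so the truncated $\limsup$ still equals $L$, and the rest of the proof is fine.
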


\begin{proof}[Proof of Lemma \ref{L:cond-as}.]
First, let
$$
\beta_x = \prob(|\xi_0| \ge e^x).
$$
For each $n, c$, we have that
$$
\prob(A_{n, c}) = \beta_{(c+1)n}(1 - \beta_n)^{n}.
$$
By the two assumptions of the lemma, we have that
\begin{equation}
\label{E:two-cons}
\lim_{n \to \infty} (1 - \beta_n)^n = 1 \qquad \mathand \qquad \lim_{n \to \infty} \sum_{i=1}^n \beta_{(c+1)i} = \infty,
\end{equation}
and thus
$
\sum_{n=1}^\infty \p(A_{n, c}) = \infty.
$
Moreover, for $m, n \in \N$ with $\floor{m/2} \ne \floor{n/2}$, we have that
$$
\prob(A_{n, c} \cap A_{m, c}) \le \prob(|\xi_{\floor{n/2}}| \ge e^{(c+1)n}, \;\; |\xi_{\floor{m/2}}| \ge e^{(c+1)m}) = \beta_{(c+1)m}\beta_{(c+1)n}.
$$
Therefore
\begin{align*}
\sum_{i, k = 1}^n \p(A_{i, c} \cap A_{k, c}) &\le \sum_{|i - k| \ge 2}^n \p(A_{i, c} \cap A_{k, c}) + 3\sum_{i=1}^n \p(A_{i, c}) \\
&\le \sum_{i, k=1}^n \beta_{(c+1)i}\beta_{(c+1)k} + 3\sum_{i=1}^n \beta_{(c+1)i} = \sum_{i=1}^n \beta_{(c+1)i} \lf( \sum_{k=1}^n \beta_{(c+1)k} + 3 \rg)
\end{align*}
Hence we have that 
\begin{equation}
\label{E:fracas}
\frac{\sum_{i, k = 1}^n \prob(A_{i, c})\prob(A_{k, c})}{\sum_{i, k = 1}^n \prob(A_{i, c} \cap A_{k, c})} \ge 
\frac{\sum_{i = 1}^n \beta_{(c+1)i}(1 - \beta_i)^{i} \lf( \sum_{k=1}^n \beta_{(c+1)k} (1 - \beta_k)^{k} \rg)}{\sum_{i=1}^n \beta_{(c+1)i} \lf( \sum_{k=1}^n \beta_{(c+1)k} + 3 \rg).
}
\end{equation}
By the two facts in \eqref{E:two-cons}, the right hand side of \eqref{E:fracas} converges to $1$ as $n \to \infty$. By Lemma \ref{L:borel-cant}, this implies that infinitely many of the events $A_{n, c}$ occur almost surely.
\end{proof}

For showing the necessity of the condition $\prob(|\xi_0| > e^n) = o(n^{-1})$ for convergence in probability, we need the following lemma from \cite{bloom2018universality}.

\begin{lemma}[Lemma 5.5, \cite{bloom2018universality}]
\label{L:tail-bd} Let $X$ be a non-negative real random variable. Suppose that 
\begin{equation}
\label{E:ls-cond}
\limsup_{n \to \infty} n\mathbb{P}(X > n) > 0.
\end{equation}
Then there exists a function $f:[0, \infty) \to [0, \infty)$ such that 
\begin{enumerate}[label=(\roman*)]
\item The quantity
$$
C(f) =\limsup_{n \to \infty} n\mathbb{P}(X > f(n))
$$
is positive and finite.
\item For every $x, y \in [0, \infty)$, we have that
$
f(x) + y \le f(x + y).
$
\end{enumerate}

\end{lemma}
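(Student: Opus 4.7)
The plan is to construct $f$ explicitly as an upper envelope of a tail-quantile function for $X$, after first smoothing $X$ to avoid atoms. Write $\psi(t) := \mathbb{P}(X > t)$ and $L := \limsup_{n \to \infty} n\psi(n) \in (0,\infty]$. If $L$ is finite, then $f(t) = t$ already works: condition (ii) holds with equality, and $C(f) = L \in (0,\infty)$. So the substantive case is $L = +\infty$, where one must find an $f$ that grows strictly super-linearly along some scale.

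For $L = +\infty$, I would first assume $\psi$ is continuous (the general case is handled by a reduction below). Define the quantile-type function $F(m) := \sup\{t \ge 0 : \psi(t) \ge 1/m\}$, which by continuity satisfies $\psi(F(m)) = 1/m$. Since $F(m) - m$ need not be monotone, I take the upper envelope
$$
f(n) := n + \max_{1 \le m \le n}\bigl(F(m) - m\bigr)^+,
$$
and extend to $[0,\infty)$ by $f(x) := f(\lfloor x\rfloor) + (x - \lfloor x\rfloor)$. Then $x \mapsto f(x) - x$ is non-decreasing by construction, so (ii) holds. For the upper bound on $C(f)$, I would use that $f(n) \ge \max(n, F(n))$, which forces $\psi(f(n)) \le 1/n$ in both cases ($\psi(F(n)) = 1/n$ when $F(n) \ge n$, and $\psi(n) < 1/n$ otherwise), giving $n\psi(f(n)) \le 1$. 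For the lower bound, the hypothesis $L = +\infty$ yields a sequence $n_k \to \infty$ with $n_k\psi(n_k) \to \infty$; setting $\ell_k := \lceil 1/\psi(n_k)\rceil$ gives $F(\ell_k) \ge n_k$ and $\ell_k = o(n_k)$, so the sequence $h(m) := (F(m) - m)^+$ is unbounded. An unbounded sequence has infinitely many records $r_1 < r_2 < \cdots$, and at each record the max defining $f(r_j)$ is attained at $m = r_j$, giving $f(r_j) = F(r_j)$ and $r_j\psi(f(r_j)) = 1$, so $C(f) \ge 1$.

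To drop the continuity assumption on $\psi$, I would replace $X$ by $X' := X + U$ with $U \sim \mathrm{Uniform}(0,1)$ independent of $X$. Then $X'$ has a continuous distribution, the hypothesis transfers since $\mathbb{P}(X > n) \le \mathbb{P}(X' > n) \le \mathbb{P}(X > n-1)$ forces $\limsup_n n\mathbb{P}(X' > n) = L$, and the sandwich $\mathbb{P}(X' > s+1) \le \mathbb{P}(X > s) \le \mathbb{P}(X' > s)$ combined with property (ii) for the atomless solution $f'$ shows that $f(t) := f'(t) - 1$ (with $f'(0) \ge 1$ arranged in the construction) satisfies (ii) and has $C(f) = C(f') \in (0,\infty)$.

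The hardest part will be the bookkeeping in this smoothing reduction: verifying that the hypothesis transfers cleanly, ensuring $f'(0) \ge 1$ so that $f(t) = f'(t) - 1$ remains in $[0, \infty)$, and checking that both the upper and lower bounds on $C$ survive the unit shift (the upper bound uses property (ii) for $f'$ to argue $f'(n) - 1 \ge f'(n-1)$ and conclude $\limsup n \mathbb{P}(X' > f'(n) - 1) \le \limsup (n-1)\mathbb{P}(X' > f'(n-1))$). Once the distribution is continuous, the envelope construction is clean and the lower bound on $C(f)$ follows immediately from the existence of infinitely many records in the unbounded sequence $h(m)$.
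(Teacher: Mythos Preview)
The paper does not prove this lemma; it is imported verbatim from \cite{bloom2018universality} (Lemma~5.5 there) and used as a black box. So there is no in-paper proof to compare against, and I will assess your argument on its own.

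Your construction is essentially correct. The case split on $L<\infty$ versus $L=\infty$ is the right move, and for $L=\infty$ the envelope
\[
f(n)=n+\max_{1\le m\le n}\bigl(F(m)-m\bigr)^+
\]
does exactly what you claim: $f(x)-x$ is non-decreasing (giving (ii)), the inequality $f(n)\ge\max(n,F(n))$ forces $n\psi(f(n))\le 1$, and the unboundedness of $h(m)=(F(m)-m)^+$ together with the record argument gives $r_j\psi(f(r_j))=1$ infinitely often, so $C(f)=1$. The quantile $F(m)$ needs $\psi(0)\ge 1/m$ to be well-defined, but since $\psi(0)>0$ (else $L=0$) this holds for all $m$ beyond a fixed threshold, and you can simply start the max at that threshold without affecting anything.

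The one place I would simplify is the smoothing reduction. Your choice $f(t):=f'(t)-1$ creates the very $f'(0)\ge 1$ headache you flag, and it is avoidable: take $f=f'$ directly. Property (ii) is inherited, the upper bound $C_X(f')\le C_{X'}(f')$ is immediate from $\mathbb{P}(X>s)\le\mathbb{P}(X'>s)$, and for the lower bound use $\mathbb{P}(X>s)\ge\mathbb{P}(X'>s+1)$ together with $f'(n)+1\le f'(n+1)$ (which is (ii) for $f'$) to get
\[
n\,\mathbb{P}(X>f'(n))\;\ge\;n\,\mathbb{P}(X'>f'(n+1))\;=\;\tfrac{n}{n+1}\,(n+1)\,\mathbb{P}(X'>f'(n+1)),
\]
whose $\limsup$ is $C_{X'}(f')=1$. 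This sidesteps the non-negativity issue entirely.
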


We will use the following corollary of Lemma \ref{L:tail-bd}. The proof of this corollary is quite similar to a statement shown in the proof of Theorem 5.6 in \cite{bloom2018universality}. We nonetheless include it for completeness.

\begin{corollary}
\label{C:in-prob-borel}
Let $\{\xi_i: i \in \N\}$ be a sequence of i.i.d.\ random variables with
$$
\limsup_{n \to \infty} n\prob(|\xi_0| > e^n) > 0.
$$
For $c \in \R$ and $n \in \N$, define
$$
B_{n, c} = \{\exists j \in [n/4, n/2] : |\xi_j| \ge e^{cn}|\xi_i| \mathforall i \in [0, n], i \ne j \}.
$$
Then for every $c \in \real$, we have that
$$
\limsup_{n \in \N} \prob(B_{n, c}) > 0.
$$
\end{corollary}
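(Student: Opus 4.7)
The plan is to apply Lemma \ref{L:tail-bd} to the non-negative random variable $X = \max(\log|\xi_0|, 0)$ to obtain a ``slow-growing'' function $f$ that encodes the tail behavior of $\log|\xi_0|$, and then to construct explicit disjoint sub-events of $B_{n,c}$ on which one $\xi_j$, $j\in[n/4,n/2]$, lies above the scale $e^{f(N)}$ while every other $|\xi_i|$, $i\in[0,n]$, is controlled by $e^{f(N)-cn}$. These events plainly force $|\xi_j|\ge e^{cn}|\xi_i|$ for all $i\ne j$, i.e.\ they imply $B_{n,c}$, and their probabilities can be estimated via independence together with the subadditivity inequality $f(x)+y\le f(x+y)$.

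Since $B_{n,c}\supseteq B_{n,0}$ whenever $c\le 0$, it suffices to treat $c>0$. Apply Lemma \ref{L:tail-bd} to obtain $f$ with $C(f)=\limsup_n n\prob(\log|\xi_0|>f(n)) \in (0,\infty)$. Finiteness of $C(f)$ yields a constant $K$ with $\prob(\log|\xi_0|>f(m))\le K/m$ for all large $m$, while positivity of the limsup gives an increasing sequence $N_k\to\infty$ with $\prob(\log|\xi_0|>f(N_k))\ge C(f)/(2N_k)$. Fix a parameter $\alpha>0$ (large, to be chosen) and set $n_k := \lceil N_k/(c+\alpha)\rceil$, so that $n_k\to\infty$ and $M_k := N_k-cn_k \ge \alpha n_k - c$. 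Subadditivity gives $f(N_k)-cn_k = f(M_k+cn_k)-cn_k \ge f(M_k)$, and hence, for every fixed index $i$,
$$
\prob\bigl(|\xi_i|>e^{f(N_k)-cn_k}\bigr) \le \prob\bigl(|\xi_i|>e^{f(M_k)}\bigr) \le K/M_k
$$
for all sufficiently large $k$.

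Now define, for each $j\in[n_k/4,\,n_k/2]$,
$$
E_{n_k,j} = \bigl\{|\xi_j|>e^{f(N_k)}\bigr\} \,\cap\, \bigcap_{i\in[0,n_k],\, i\ne j}\bigl\{|\xi_i|\le e^{f(N_k)-cn_k}\bigr\}.
$$
Since $c>0$, the constraint at an index $j_2\ne j_1$ forbids $|\xi_{j_2}|>e^{f(N_k)}$, so the events $\{E_{n_k,j}\}_j$ are pairwise disjoint, and each is contained in $B_{n_k,c}$. By independence of the $\xi_i$,
$$
\prob(E_{n_k,j}) \ge \frac{C(f)}{2N_k}\bigl(1-K/M_k\bigr)^{n_k} \ge \frac{C(f)}{2N_k}\,e^{-2Kn_k/M_k},
$$
for $k$ large. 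Summing the disjoint estimate over the $\sim n_k/4$ admissible $j$ and using $n_k/N_k \to 1/(c+\alpha)$ together with $n_k/M_k \to 1/\alpha$ yields
$$
\prob(B_{n_k,c}) \ge \frac{C(f)\,e^{-2K/\alpha}}{8(c+\alpha)} + o(1),
$$
which is a strictly positive constant, so $\limsup_n \prob(B_{n,c})>0$.

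The main technical issue is the three-way balance among $N_k$, $n_k$, and $M_k=N_k-cn_k$: the choice of $N_k$ is dictated by the limsup hypothesis, $n_k$ must grow proportionally to $N_k$ so that subadditivity produces a positive shift $M_k$, and $M_k$ must grow at least linearly with $n_k$ in order to keep $(1-K/M_k)^{n_k}$ bounded away from zero. Taking $\alpha$ large enough relative to $c$ and $K$ (in fact, any fixed $\alpha>0$ suffices, once $k$ is large) meets all three requirements simultaneously.
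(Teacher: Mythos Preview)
Your argument is correct and follows essentially the same route as the paper: both proofs apply Lemma~\ref{L:tail-bd} to (a version of) $\log|\xi_0|$, use the subadditivity property $f(x)+y\le f(x+y)$ to separate a ``high'' threshold $e^{f(N_k)}$ from a ``low'' threshold $e^{f(N_k)-cn_k}\ge e^{f(M_k)}$, and then show that with probability bounded away from zero some $\xi_j$ with $j\in[n/4,n/2]$ exceeds the high threshold while every other $\xi_i$ stays below the low one. The only substantive difference is in the last step: the paper packages the count of large coefficients as a binomial and invokes Poisson convergence to compare $\mathbb{P}(\tilde E_{n,\alpha}=1)$ with $\mathbb{P}(E_{n,\alpha-c}\ge 2)$, whereas you bypass this by summing the explicit disjoint events $E_{n_k,j}$ and bounding $(1-K/M_k)^{n_k}$ directly, which is slightly more elementary and makes the role of the parameter $\alpha$ more transparent (indeed any fixed $\alpha>0$ already works in your estimate).
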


\begin{proof}
For a function $g: [0, \infty) \to [0, \infty)$ and $n \in \mathbb{N}$, define
$$
D_n(g) := n\mathbb{P}(|\xi_0| > g(n)), \quad \text{and} \quad D(g) := \limsup_{n \to \infty} D_n(g).
$$
We apply Lemma \ref{L:tail-bd} to the random variable $\log (1 + |\xi_0|)$ to obtain a function $f$ satisfying properties (i) and (ii) of that lemma. Letting $g = e^f$, we then have that
\begin{enumerate}[label=(\roman*)]
\item $D(g) \in (0, \infty)$.
\item For every $x, y \in [0, \infty)$, we have that 
$
g(x + y) \ge e^yg(x).
$
\end{enumerate}
For $\alpha \in (0, \infty)$, define $g_\alpha(x) : = g(\alpha x)$. Observe that $\alpha D(g_{\alpha}) = D(g)$.
Now define
$$
E_{n, \alpha} = \#\{ i \le n : |\xi_i| > g_\alpha(n) \} \qquad \mathand \qquad \tilde{E}_{n, \alpha} = \# \{ i \in [n/4, n/2] : |\xi_i| > g_\alpha(n) \}
$$
For each $\alpha$, $E_{n, \alpha}$ is a binomial random variable with $n$ trials and mean $D_n(g_\alpha)$. The random variable $\tilde{E}_{n, \alpha}$ is binomial with $m_n := \card{\Z \cap [n/4, n/2] } $ trials and mean $D_n(g_\alpha) m_n/n$. Of course, $m_n/n \to 1/4$ as $n \to \infty$.

\medskip

Now fix $c \in [0, \infty)$. For any $\alpha > c$, there exists a subsequence $Y \sset \N$ such that
$$
\lim_{n \in Y} \mathbb{E} \tilde{E}_{n, \alpha} = \frac{D(g)}{4\alpha}, \quad \text{whereas} \quad \limsup_{n \in Y} \mathbb{E} E_{n, \alpha - c} \le \frac{D(g)}{\alpha - c}.
$$
Therefore for large enough $\alpha$, property (i) and Poisson convergence for binomial random variables implies that
\begin{equation}
\label{E:abb}
\limsup_{n \in Y} \Big(\mathbb{P} (\tilde{E}_{n, \alpha} = 1) - \mathbb{P} (E_{n, \alpha - c} \ge 2) \Big) > 0.
\end{equation}
By property (ii) of the function $g$, we have that
$
B_{n, c} \sset \{ \tilde{E}_{n, \alpha} = 1, E_{n, \alpha - c} < 2 \},
$
and hence \eqref{E:abb} implies the lemma.
\end{proof}

The next lemma bounds the magnitude of a monic polynomial on an annulus.

\begin{lemma}
\label{L:lower-bd}
Let $0 < r_1 < r_2$. Then for any monic polynomial $q$ of degree $n \ge 1$, we can find a simple closed curve $C$ satisfying the following conditions:
\begin{enumerate}[nosep, label=(\roman*)]
\item $C$ is contained in the annulus $A_{r_1, r_2} = \{z : r_1 < |z| < r_2\}$.
\item The disk $D_{r_1} = \{z : |z| < r_1\}$ is contained in the interior of $C$.
\item $|q(z)| \ge [(r_2 - r_1)/5]^n$ for all $z \in C$.
\end{enumerate}
 
\end{lemma}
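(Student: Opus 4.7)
The plan is to localise the ``bad set'' $E_h := \{z : |q(z)| \le h^n\}$ with $h = (r_2-r_1)/5$ by a short list of small disks, and then to exhibit a concentric circle lying in the annulus $A_{r_1, r_2}$ which stays out of every such disk. On such a circle the desired lower bound $|q(z)| \ge h^n$ is immediate from the definition of $E_h$.

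My first step would be to invoke Cartan's lemma in its stronger ``sum of radii'' form: for any $h > 0$ there exist at most $n$ closed disks $D(a_j, \rho_j)$ whose union contains $E_h$ and whose radii satisfy $\sum_j \rho_j \le C h$ for a small absolute constant $C$. This refinement is a standard strengthening of the planar-area estimate recorded in Lemma \ref{L:poly-estimate}, and in fact the proof of that lemma implicitly produces a disk cover of this type (by taking the connected components of $E_h$ and circumscribing each with a disk).

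The second step is a one-dimensional pigeonhole. The concentric circle $\{|z|=r\}$ meets the disk $D(a_j, \rho_j)$ if and only if $r \in [|a_j| - \rho_j, |a_j| + \rho_j]$, an interval of length at most $2\rho_j$. Hence the set of ``bad'' radii in $(r_1, r_2)$ is a union of intervals of total Lebesgue measure at most $2 \sum_j \rho_j \le 2Ch$. The constant $5$ in the statement is calibrated precisely so that this total is strictly less than $r_2 - r_1 = 5h$, and therefore we can pick some $r \in (r_1, r_2)$ whose circle $C := \{|z| = r\}$ avoids every Cartan disk. By construction $C$ lies in $A_{r_1, r_2}$ and encloses $D_{r_1}$, verifying (i) and (ii), while $C \cap E_h = \emptyset$ yields (iii).

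The main obstacle is obtaining the covering with $2\sum_j \rho_j < 5h$; the area estimate in Lemma \ref{L:poly-estimate} is not sharp enough on its own, because area does not control total radius. If the Cartan cover one extracts gives only the classical $\sum_j \rho_j \le 2eh$, the concentric-circle pigeonhole just misses, and one has to replace the concentric circle with a slightly perturbed simple closed curve that reroutes around the handful of disks obstructing it — this is still possible because each small disk has small angular footprint from the origin, so the ``good'' angular range is nonempty. Either way, once the right form of the Cartan cover is in hand, the rest of the argument is a routine geometric pigeonhole.
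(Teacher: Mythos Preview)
Your plan differs from the paper's, and the gap you flag at the end is real and not closed by the ``angular footprint'' patch you suggest.

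The paper does not use a Cartan disk cover at all. Instead it works directly with the connected components of the sublevel set $S_\alpha = \{z : |q(z)| \le \alpha^n\}$ for $\alpha = (r_2-r_1)/5$. Two potential-theoretic facts do the work: (a) $\capa(S_\alpha) \le \alpha$ (Ransford, Theorem~5.2.5), and (b) for a connected compact set, diameter is at most $4$ times capacity (Ransford, Theorem~5.3.2). Together these give that \emph{each connected component} of $S_\alpha$ has diameter at most $4\alpha = \tfrac{4}{5}(r_2-r_1) < r_2 - r_1$. Hence no component of the bad set can reach from the inner boundary circle to the outer one, and a short topological argument produces a simple closed curve in the annulus, winding once around $D_{r_1}$, that avoids $S_\alpha$.

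Your route via the classical Cartan cover $\sum_j \rho_j \le 2eh$ genuinely falls short of the radial pigeonhole, exactly as you note. But the proposed rescue --- rerouting because ``each small disk has small angular footprint from the origin'' --- is not valid in general. The Cartan disks are centred at the roots of $q$, and a single disk may carry almost the entire radius budget $2eh = \tfrac{2e}{5}(r_2-r_1)$; when $r_2 - r_1$ is comparable to or larger than $r_1$, such a disk can contain the origin (take e.g.\ $q(z)=z-r_1$ with $r_2 = 10r_1$), so its angular footprint is $2\pi$. There is no uniform smallness of angular footprints to exploit. The component-diameter argument in the paper sidesteps this entirely: it controls the \emph{size of each piece} of the bad set, not merely a sum of covering radii, and that is exactly what the constant $5$ in the statement is tuned to.
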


\begin{proof}
Fix a polynomial $q$ of degree $n$, and let $\mu$ be uniform measure on the roots of $q$. For $\al > 0$, let 
$$
S_\al = \lf\{z \in \C : \frac{1}{n} \log |q(z)| \le \log \al \rg\}.
$$
By Theorem 5.2.5 in \cite{ransford1995potential}, $\capa(S_\al) \le \al$. By a standard estimate on the diameter of a connected set in terms of its capacity (see \cite{ransford1995potential}, Theorem 5.3.2), the diameter of each of the connected components of $S_\al$ is at most $4\al$. Hence there is a simple closed curve $C$ contained in the annulus $A_{r_1, r_2}$ which avoids the set $S_{(r_2 - r_1)/5}$ and contains $D_{r_1}$ in its interior. The curve $C$ satisfies the conditions of the lemma.
\end{proof}

We are now ready to prove the necessity statements in Theorem \ref{T:extremal}. For this lemma, we use the notation $A_{n, c}$ and $B_{n, c}$ for the events in Lemma \ref{L:cond-as} and Corollary \ref{C:in-prob-borel}.

\begin{proof} 
\textbf{The necessity of the condition $\prob(|\xi_0| > e^n) = o(n^{-1})$ for convergence in probability:} 

Since $|p_n| = |a_{n, n}|e^{-np_{\mu_n}}$, we can bound the value of $|p_n|$ above by bounding $|a_{n, n}|$ above and $p_{\mu_n}$ below. By the asymptotic minimality of $p_n$, there exists $c > 0$ such that $|a_{n, n}| \le c^n$ for all $n$. By the asymptotic minimality of $p_n$, Lemma \ref{L:p-descent}, and Theorem \ref{T:stahl-mod-2}, for any compact set $K$, the logarithmic potentials $\{p_{\mu_n} : n \in \N\}$ are uniformly bounded below on $K$.

\medskip

Hence for every $r > 0$ there exists a constant $b_r > 0$ such that
\begin{equation}
\label{E:local-bound}
|p_n(z)| \le {b_r^n}
\end{equation}
for all $z \in D_{r+1} = \{z : |z| < r + 1\}$ and all $n \in \N$. Also, by the asymptotic minimality of the polynomials $\{p_n\}$, we can find a constant $d > 1$ such that $|a_{n, n}| \ge d^{-n}$ for all large enough $n \in \N$. Hence by Lemma \ref{L:lower-bd}, for all large enough $j \in \N$, we can find a simple closed curve $C_j \sset \{z : |z| \in (r, r+ 1)\}$ containing the disk $D_r$,  such that
\begin{equation}
\label{E:p-bd}
|p_j(z)| \ge \lf(\frac{1}{5d}\rg)^j \qquad \text{ for all } z \in C_j.
\end{equation}
Combining \eqref{E:local-bound} and \eqref{E:p-bd}, we can choose $c > 0$ such that for all large enough $n \in \N$, for every $j \in [n/4, n/2]$ we have that
\begin{equation}
\label{E:ecj}
e^{cn} |p_j(z)| > \sum_{i \in [0, n]} \lf|p_i(z) \rg| \qquad \text{ for all } z \in C_j.
\end{equation}
In particular, this implies that for large enough $n$, on the event $B_{n, c}$ there exists a (random) $J \in [n/4, n/2]$ such that
\begin{equation}
\label{E:xi-j-lower}
|\xi_J p_J(z)| >  \sum_{i \in [0, n], i \ne j} |\xi_i| |p_i(z)| \ge\lf| \sum_{i \in [0, n], i \ne j} \xi_i p_i(z) \rg| \qquad \text{ for all } z \in C_J.
\end{equation}
Hence by Rouch\'e's Theorem, $\mu_{G_n}(D_r) \le 1/2$ on the event $B_{n, c}$.  By Corollary \ref{C:in-prob-borel}, this implies that
$$
\limsup_{n \to \infty} \prob(\mu_{G_n}(D_r) \le 1/2) > 0.
$$
Since $r > 0$ was arbitrary, the sequence $\mu_{G_n}$ cannot have a limit in probability.

\medskip

\textbf{The necessity of the condition $\expt \log(1 + |\xi_0|) < \infty$ for almost sure convergence:} 

\medskip

To prove the necessity of $\expt \log(1 + |\xi_0|) < \infty$ for almost sure convergence, we may assume that the random variables $\xi_i$ satisfy the assumptions of Lemma \ref{L:cond-as}, since otherwise the sequence $\{\mu_{G_n}\}$ does not even converge in probability.

\medskip

In this case, for almost every $\om \in \Om$, for every $c > 0$ infinitely many of the events $A_{n, c}$ occur almost surely, where the events $A_{n, c}$ are as in Lemma \ref{L:cond-as}. Moreover, $A_{n, c} \sset B_{n, c}$ for all $n$. Hence by the above argument, the sequence $ \mu_{G_n}$ has no almost sure limit.
\end{proof}

\section{Appendix: Proof of Theorem \ref{T:stahl-mod-2}}
\label{S:appendix}

For this proof, we introduce the \textbf{fine topology} on $\C$. This is the coarsest topology on $\C$ that makes every subharmonic function continuous. In particular, we will use that if $A$ is a connected open set in the usual Euclidean topology, then the fine boundary of $A$ and the Euclidean boundary of $A$ coincide (see \cite{stahl1992general}, Appendix II).

\begin{proof}[Proof of Theorem \ref{T:stahl-mod-2}] 
Without loss of generality, we may assume that $a_{n, n} = 1$. We also let $p \in (0, \infty]$ and $\tau$ be a regular measure on $K$ such that assumption (i) of Theorem \ref{T:det-cvg} holds.

\medskip

The sequence $\{\mu_n\}$ is tight by Lemma \ref{L:tightness} with subsequential limits supported on $P(K)$. Let $\mu_{n_i}$ be a convergent subsequence of $\mu_n$ with limit $\mu$. For ease of notation, we relabel $\mu_{n_i} = \mu_n$. 
We will show that 
\begin{equation}
\label{E:on-bry}
- p_\mu(z) \ge \log \capa(K)
\end{equation}
for every $z \in \del P(K)$. Suppose not. Then letting 
$$
A_\de := \{z \in \C :  -p_\mu(z) - \log \capa(K) < - \de \},
$$
there must exist some $\de > 0$ such that $\del P(K) \cap A_{\de} \ne \emptyset$. We show that $A_\de \cap \supp(\mu) \ne \emptyset$. 
\medskip
The function $-p_\mu$ is upper semicontinuous, so each $A_\de$ is open. Also, subharmonicity of $-p_\mu$ implies that each component of $A_\de$ is simply connected by the sub-mean value property. We now restrict our attention to one component $A_\de^*$ of $A_\de$ that intersects $\del P(K)$.

\medskip

The set $A_\de^*$ has the same boundary in the fine topology as it does in the usual Euclidean topology (see the discussion preceding the proof). Therefore $-p_\mu(z) = \log \capa(K) -\de$ for all $z$ in the Euclidean boundary $\del A_\de^*$.

\medskip

Moreover, for any $\eta > \de$, we can apply the same argument to $A_\eta^* = \{z \in A_\de^*: -p_\mu(z) - \log \capa(K) < - \eta \}$. This gives that $\close{A_\eta^*} \sset A_\de^*$. If $p_\mu$ were harmonic on $A_\de^*$, then it would be continuous on $\close{A_\eta^*}$, and thus have an interior minimum on $A_\eta^*$, violating the minimum principle. Therefore $p_\mu$ is not harmonic on $A_\de^*$, and hence $A_\de^* \cap \supp(\mu) \neq \emptyset.$

\medskip

We can thus find an open connected set $B$ such that $\close{B} \sset A_\de$, $B \cap U(K) \ne \emptyset$, and $B \cap \supp(\mu) \ne 0$. Let $S \sset B$ be a compact set such that $\mu(\Int(S)) > 0$.
\medskip

\medskip

By construction, $S$ is contained in the unbounded component of the complement of $P(K) \smin B$. Therefore we can invoke Lemma \ref{L:move-in-roots} applied to the sets $P(K) \smin B$ and $S$. Let $\{x_{n, i} : i \le k(n)\}$ be the set of roots of $p_n$ on $S$. Since $\mu(\Int(S)) > 0$, we have that $\liminf_{n \to \infty} k(n)/n > 0$. By Lemma \ref{L:move-in-roots}, for any function $g: \N \to [1, \infty)$ with 
\begin{equation}
\label{E:gn}
\lim_{n \to \infty} \frac{1}n \log g(n) = 0,
\end{equation}
we can find a sequence of integers $\{\ell(n): n \in \N\}$ and rational functions
$$
\theta_n(z) = \prod_{i=1}^{\ell(n)} \frac{z - y_{n,i}}{z - x_{n,i}} \qquad \text{ such that } \qquad \limsup_{n \to \infty} g(n) ||\theta_n||_{P(K) \smin B}  < \frac{1}2 \quad \mathand \quad \ell(n) = o(n).
$$
In the above formula, recall that $||\cdot||_{P(K) \smin B}$ is the uniform norm on $P(K) \smin B$. We will apply this to the function 
$$
g(n) = \frac{||p_n||_{L^p(\tau)}}{||q_n||_{L^p(\tau)}},
$$ 
where $q_n$ is the normalized $L^p(\tau)$-minimal polynomial of degree $n$ (see Example \ref{E:Lp-extremal}). The function $g$ satisfies condition \eqref{E:gn} by assumption (i) of Theorem \ref{T:det-cvg} and the regularity of $\tau$. 

\medskip

Now set
$
r_n = \theta_np_n,
$
and let $\nu_n$ be uniform measure on the roots of $r_n$. We will show that the functions $r_n$ contradict the minimality of the sequence $\{q_n\}$, therefore allowing us to conclude the inequality \eqref{E:on-bry}.

\medskip

First, since the zeros of $r_n$ and $p_n$ are the same up to a set of size $\ell(n) = o(n)$, we have that
$$
\nu_n \to \mu
$$
weakly as $n \to \infty$. Also, because $\close{B}$ is contained in $A_\de$, Lemma \ref{L:p-descent}  implies that
$$
||r_n||_B \le [\capa(K)]^{n}e^{ - \de n/2}
$$
for large enough $n$.
We now split into two cases, depending on whether $p \in (0, \infty)$, or $p = \infty$. First assume that $p = \infty$. Using the bound on $\theta_n$, for all large enough $n$ we have that
\begin{align*}
||r_n||_{L^\infty(\tau)} = ||r_n||_K &\le \max (||r_n||_B, ||r_n||_{P(K) \smin B}) \\
&\le  \max \lf([\capa(K)]^{n} e^{- \de n/2}, ||\theta_n||_{P(K) \smin B} ||p_n||_{P(K) \smin B} \rg) \\
&< ||q_n||_K.
\end{align*}
In the final line, we have used that $||q_n||_K \ge [\capa(K)]^{n}$ (see Theorem III.15 in \cite{tsuji1959potential}). As $r_n$ is a monic polynomial of degree $n$, this contradicts the minimality of $q_n$.

\medskip

The case when $p \in (0, \infty)$ is similar. For all large enough $n$ we have that
\begin{align*}
||r_n||^p_{L^p(\tau)} &= \int_B |r_n|^p d\mu + \int_{P(K) \smin B} |r_n|^p d\mu \\
&< \tau(B)||r_n||^p_B  + ||\theta_n||^p_{P(K) \smin B} \int_{P(K) \smin B} |r_n|^p d \tau \\
& \le [\capa(K)]^{np}e^{- \de n p/2} + \frac{1}{2^p} ||q_n||^p_{L^p(\tau)} < ||q_n||^p_{L^p(\tau)}.
\end{align*}
Again, this contradicts the minimality of $q_n$, and so we can conclude \eqref{E:on-bry}.

\medskip

Now, the inequality \eqref{E:on-bry} and the characterization of $p_{\mu_K}$ in Theorem \ref{T:eq-def} imply that $p_{\mu_K} - p_{\mu} \ge 0$ for quasi-every $z \in \del P(K)$. We now think of $p_{\mu_K} - p_{\mu}$ as a function on $\close{U(K)} \cup \{\infty\}$. This function is harmonic on the interior $U(K) \cup \{\infty\}$. Moreover, it is continuous at the boundary $\del P(K)$ since subharmonic functions are continuous in the fine topology, and the Euclidean and fine boundaries of connected open sets coincide. Therefore by a variant of the minimum principle for harmonic functions, (see Theorem III.28 in \cite{tsuji1959potential}), we have that
\begin{equation}
\label{E:on-bry-2}
p_{\mu_K} - p_\mu \ge 0
\end{equation}
for all $z \in U(K)$. Also, 
$$
\lim_{z \to \infty} [p_{\mu_K}(z) - p_\mu(z)] = 0.
$$
Therefore by \eqref{E:on-bry-2} and the minimum principle again, $p_{\mu_K} - p_\mu = 0$ on $U(K)$, and hence also on $\close{U(K)}$ by continuity. Moreover, $\mu_K$ is supported on $\del P(K) = \del U(K) \sset \close{U(K)}$, so the principle of domination from potential theory (see Appendix A.III, \cite{stahl1992general}) implies that $p_\mu(z) \ge p_{\mu_K}(z)$ for all $z \in \C$.
\end{proof}

\section*{Acknowledgements}
I would like to thank Thomas Bloom for many valuable conversations about the problem, and for instructive comments about previous drafts. I would also like to thank Vilmos Totik for providing the original proof of Theorem \ref{T:non-leaders}.

\bibliographystyle{acm}
\bibliography{RandomPolyBib}

\end{document}